\newcommand{\Title}[1]{\bigskip\bigskip\centerline{\bf #1}\bigskip}
\newcommand{\Author}[1]{\medskip\centerline{ \it #1}}
\newcommand{\Affiliation}[1]{\medskip\centerline{#1}}
\newcommand{\Email}[1]{\medskip\centerline{#1}\bigskip}
\begin{document}

\newcommand{\N}{\mbox {$\mathbb N $}}
\newcommand{\Z}{\mbox {$\mathbb Z $}}
\newcommand{\Q}{\mbox {$\mathbb Q $}}
\newcommand{\R}{\mbox {$\mathbb R $}}
\newcommand{\lo }{\longrightarrow }
\newcommand{\ul}{\underleftarrow }
\newcommand{\rl}{\underrightarrow }
\newcommand{\rs }{\rightsquigarrow }
\newcommand{\ra }{\rightarrow }
\newcommand{\dd }{\rightsquigarrow }
\newcommand{\ol }{\overline }
\newcommand{\la }{\langle }
\newcommand{\tr }{\triangle }
\newcommand{\xr }{\xrightarrow }
\newcommand{\de }{\delta }
\newcommand{\pa }{\partial }
\newcommand{\LR }{\Longleftrightarrow }
\newcommand{\Ri }{\Rightarrow }
\newcommand{\va }{\varphi }
\newcommand{\Den}{{\rm Den}\,}
\newcommand{\Ker}{{\rm Ker}\,}
\newcommand{\Reg}{{\rm Reg}\,}
\newcommand{\Fix}{{\rm Fix}\,}
\newcommand{\Sup}{{\rm Sup}\,}
\newcommand{\Inf}{{\rm Inf}\,}
\newcommand{\Img}{{\rm Im}\,}
\newcommand{\Id}{{\rm Id}\,}

\newtheorem{theorem}{Theorem}[section]
\newtheorem{lemma}[theorem]{Lemma}
\newtheorem{proposition}[theorem]{Proposition}
\newtheorem{corollary}[theorem]{Corollary}
\newtheorem{definition}[theorem]{Definition}
\newtheorem{example}[theorem]{Example}
\newtheorem{examples}[theorem]{Examples}
\newtheorem{xca}[theorem]{Exercise}
\theoremstyle{remark}
\newtheorem{remark}[theorem]{Remark}
\newtheorem{remarks}[theorem]{Remarks}
\numberwithin{equation}{section}

\def\leftmark{L.C. Ciungu}

\Title{QUANTUM-WAJSBERG ALGEBRAS} 
\title[Quantum-Wajsberg algebras]{}
                                                                           
\Author{\textbf{LAVINIA CORINA CIUNGU}}
\Affiliation{Department of Mathematics} 
\Affiliation{St Francis College}
\Affiliation{179 Livingston Street, Brooklyn, NY 11201, USA}
\Email{lciungu@sfc.edu}

\begin{abstract} 
Starting from involutive BE algebras, we redefine the quantum-MV algebras, by introducing and studying the 
notion of quantum-Wajsberg algebras. We define the $\vee$-commutative quantum-Wajsberg algebras and we investigate their properties. We also prove that any Wajsberg algebra is a quantum-Wajsberg algebra, 
and give conditions for quantum-Wajsberg algebras to be Wajsberg algebras. 
Furthermore, we prove that Wajsberg algebras are both quantum-Wajsberg algebras and commutative quantum-B algebras. 
Finally, we provide certain conditions for quantum-MV algebras to be MV algebras. \\

\noindent
\textbf{Keywords:} {quantum-Wajsberg algebra, quantum-MV algebra, MV algebra, Wajsberg algebra, BE algebra, 
m-BE algebra, quantum-B algebra} \\
\textbf{AMS classification (2020):} 06D35, 06F35, 03G25, 06A06
\end{abstract}

\maketitle

\section{Introduction} 

In the last decades, developing algebras related to the logical foundations of quantum mechanics became a 
central topic of research. 
Generally known as quantum structures, these algebras serve as models for the formalism of quantum mechanics. 
The behavior of physical systems which describe the events concerning elementary particles is different from 
that of physical systems observed in classical physics. 
Since with the classical propositional calculus these events cannot be described, the logic of quantum mechanics 
becomes an important tool for evaluating the propositions and propositional formulas on these systems. 
The many-valued logic descovered by \L ukasiewicz became the foundation for fuzzy logic, and the class of quantum structures has been growing: orthomodular lattices, orthomodular posets,  orthoalgebras, effect algebras, D-posets, 
MV algebras, BL algebras, hoops, BCK/BCI algebras. 
When the operations of addition and multiplication were no longer assumed to be commutative or there are two implications, new quantum structures appeared: pseudo-MV/BL/BCK/BCI algebras, pseudo-hoops, residuated lattices, 
pseudo-effect algebras. 
More detailed considerations on this topic are presented in the monograph \cite{DvPu} by A. Dvure\v censkij and S. Pulmannov\'a, as well as in the monograph \cite{Bot1} by M. Botur, I. Chajda, R. Hala\v s, J. $\rm K \ddot{u}hr$ 
and J. Paseka. 
These quantum structures have been generalized in two directions: \\
{\bf I.} Quantum-MV algebras were introduced by R. Giuntini in \cite{Giunt1} as non-lattice generalizations of 
MV algebras (\cite{Chang, Cig1}) and as non-idempotent generalizations of orthomodular lattices. 
Quantum-MV algebras also generalize the effect algebras (\cite{Foulis, DvVe1}) and the orthomodular lattices (\cite{Kalm, Ptak}).   
These structures were intensively studied by R. Giuntini (\cite{Giunt2, Giunt3, Giunt4, Giunt5, Giunt6}), 
A. Dvure\v censkij and S. Pulmannov\'a (\cite{DvPu}), R. Giuntini and S. Pulmannov\'a (\cite{Giunt7}) and by 
A. Iorgulescu in \cite{Ior30, Ior31, Ior32, Ior33, Ior34, Ior35}. \\
{\bf II.} Quantum-B algebras defined and investigated by W. Rump and Y.C. Yang (\cite{Rump2, Rump1}) 
arise from the concept of quantales, which was introduced in 1984 as a framework for quantum mechanics 
with a view toward non-commutative logic (\cite{Mulv1}). 
Many implicational algebras studied so far (pseudo-effect algebras, residuated lattices, pseudo-MV/BL/MTL algebras, bounded non-commutative R$\ell$-monoids, pseudo-hoops, pseudo-BCK/BCI algebras) are \\ 
quantum-B algebras. 
Interesting results on quantum-B algebras have been presented in \cite{Rump3, Rump4, Han1, Han2}. \\
Unfortunately, there are no connections between quantum-MV algebras and quantum-B algebras; there are only particular 
algebras, such as MV algebras, that belong to both structures. 
To be able to compare certain quantum structures, it is preferable that they have the same 
signature, which would also allow possible generalizations of the quantum structures. \\ 
In this paper, we introduce the notion of quantum-Wajsberg algebras, by redefining the quantum-MV algebras starting 
from involutive BE algebras. 
We give a characterization of quantum-Wajsberg algebras, investigate their properties, and show that, 
in general, quantum-Wajsberg algebras are not (commutative) quantum-B algebras. 
We also define the $\vee$-commutative quantum-Wajsberg algebras and study their properties. 
Furthermore, we prove that any Wajsberg algebra (bounded $\vee$-commutative BCK algebra) is a quantum-Wajsberg algebra, 
and we give a condition for a quantum-Wajsberg algebra to be a Wajsberg algebra. 
We prove that Wajsberg algebras are both quantum-Wajsberg algebras and commutative quantum-B algebras. 
We establish the connection between quantum-Wajsberg algebras and quantum-MV algebras, proving that the quantum-Wajsberg algebras are term-equivalent to quantum-MV algebras. 
We show that, in general, the quantum-Wajsberg algebras are not commutative quantum-B algebras and,  
if a quantum-Wajsberg algebra is self distributive, then the corresponding quantum-MV algebra is an MV algebra.

$\vspace*{5mm}$

\section{Preliminaries}

\indent
In this section, we recall some basic notions and results regarding quantum-MV algebras, quantum-B algebras, 
BCK algebras, and Wajsberg algebras used in the paper. 

Starting from the systems of positive implicational calculus, weak systems of positive implicational calculus 
and BCK systems, in 1966 Y. Imai and K. Is\`eki introduced the BCK algebras (\cite{Imai}) as right algebras. 
BCK algebras are also used in a dual form, with an implication $\ra$ and with one constant element $1$, 
that is the greatest element (\cite{Kim2}). 
An equivalent shorter definition of BCK algebras is the following.

\begin{definition} \label{pr-50} $\rm($\cite{Grza}$\rm)$ 
A \emph{(left-)BCK algebra} is an algebra $(X,\ra,1)$ of type $(2,0)$ satisfying the following conditions, 
for all $x,y,z\in X$: \\
$(BCK_1)$ $(x\ra y)\ra ((y\ra z)\ra (x\ra z))=1;$ \\
$(BCK_2)$ $1\ra x=x;$ \\
$(BCK_3)$ $x\ra 1=1;$ \\ 
$(BCK_4)$ $x\ra y=1$ and $y\ra x=1$ imply $x=y$. 
\end{definition}

In this paper, by BCK algebras we understand the left-BCK algebras. 
If $(X,\ra,1)$ is a BCK algebra, for $x,y\in X$ we define the relation $\le$ by $x\le y$ if and only if $x\ra y=1$, 
and $\le$ is a partial order on $X$. \\
We also define $x\vee y=(x\ra y)\ra y$. 
If the BCK algebra satisfies axiom \\
$($$\vee$-$comm)$ $x\vee y=y\vee x$, for all $x,y$, \\ 
we shall say that it is \emph{$\vee$-commutative}.  

\begin{proposition} \label{pr-60} $\rm($\cite{Iseki}$\rm)$ Let $(X,\ra,1)$ be a BCK algebra. 
Then, the following hold, for all $x,y,z\in X$: \\
$(1)$ $x\le y$ implies $z\ra x\le z\ra y$ and $y\ra z\le x\ra z;$ \\
$(2)$ $x\le y\ra x;$ \\
$(3)$ $x\ra (y\ra z)=y\ra (x\ra z);$ \\
$(4)$ $((y\ra x)\ra x)\ra x=y\ra x;$ \\
$(5)$ $z\le y\ra x$ iff $y\le z\ra x$.  
\end{proposition}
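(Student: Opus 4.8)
The plan is to derive everything from the four axioms together with two preliminary observations extracted from $(BCK_1)$, reserving antisymmetry $(BCK_4)$ only for the two identities (3) and (4). First I would record two instances of $(BCK_1)$. Setting $x=1$ and applying $(BCK_2)$ twice turns $(BCK_1)$ into $y\ra((y\ra z)\ra z)=1$, i.e. the basic inequality $y\le (y\ra z)\ra z$. Setting instead $y=1$ and using $(BCK_3)$ and $(BCK_2)$ yields $z\ra(x\ra z)=1$, which after relabelling is exactly claim (2), so (2) costs nothing. I would also note that $\le$ is transitive straight from $(BCK_1)$: from $x\ra y=1$ and $y\ra z=1$ the axiom collapses to $x\ra z=1$.

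Next I would dispatch claim (1), both halves of which follow from $(BCK_1)$ with no use of exchange. For the second half, the instance $(x\ra y)\ra((y\ra z)\ra(x\ra z))=1$ with $x\ra y=1$ reduces via $(BCK_2)$ to $(y\ra z)\ra(x\ra z)=1$, giving $y\ra z\le x\ra z$. For the first half I would apply $(BCK_1)$ in the instance $(z\ra x)\ra((x\ra y)\ra(z\ra y))=1$; substituting $x\ra y=1$ and simplifying by $(BCK_2)$ leaves $(z\ra x)\ra(z\ra y)=1$, i.e. $z\ra x\le z\ra y$.

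The heart of the argument is the exchange identity (3), and this is the step I expect to be the main obstacle, since it is the only place where both a sharp instance of $(BCK_1)$ and antisymmetry combine. By the symmetry $x\leftrightarrow y$ it suffices to prove one inequality, $x\ra(y\ra z)\le y\ra(x\ra z)$. I would obtain it by chaining two facts: the instance of $(BCK_1)$ with middle term $y\ra z$ and last term $z$ gives $x\ra(y\ra z)\le ((y\ra z)\ra z)\ra(x\ra z)$, while the basic inequality $y\le (y\ra z)\ra z$ fed into the antitone half of (1) yields $((y\ra z)\ra z)\ra(x\ra z)\le y\ra(x\ra z)$; transitivity then closes the gap. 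Swapping $x$ and $y$ produces the reverse inequality, and $(BCK_4)$ upgrades the two inequalities to the desired equality.

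The remaining two items are short consequences. Claim (5) is immediate once (3) is available: $z\le y\ra x$ means $z\ra(y\ra x)=1$, which by (3) equals $y\ra(z\ra x)=1$, i.e. $y\le z\ra x$, the converse being the same computation read backwards. For claim (4) I would again play the basic inequality against the antitone monotonicity in (1): on one side, specialising $y\le (y\ra z)\ra z$ at $z=x$ with $y$ replaced by $y\ra x$ gives $y\ra x\le ((y\ra x)\ra x)\ra x$; on the other side, $y\le (y\ra x)\ra x$ combined with (1) gives $((y\ra x)\ra x)\ra x\le y\ra x$. A final appeal to $(BCK_4)$ yields the identity.
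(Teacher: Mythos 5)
Your proof is correct. Note that the paper itself offers no proof of this proposition: it is quoted from Is\`eki--Tanaka with a citation, so there is no argument of the author's to compare yours against. Your derivation works entirely from the four stated axioms, and I checked each step: the two preliminary instances of $(BCK_1)$ (at $x=1$ giving $y\le (y\ra z)\ra z$, and at $y=1$ giving (2)), transitivity of $\le$, both monotonicity claims in (1), the two-inequality proof of the exchange law (3) via $x\ra(y\ra z)\le ((y\ra z)\ra z)\ra(x\ra z)\le y\ra(x\ra z)$ closed by antisymmetry, and the short deductions of (4) and (5) all go through. This is the standard self-contained derivation one finds in the BCK literature, and it is a reasonable thing to supply where the paper merely cites the result.
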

 
Wajsberg algebras were introduced in 1984 by Font, Rodriguez and Torrens in \cite{Font1} as algebraic model 
of  $\aleph_0$-valued \L ukasiewicz logic.   

\begin{definition} \label{pr-70} $\rm($\cite{Font1}$\rm)$ 
A \emph{(left-)Wajsberg algebra} is an algebra $(X,\ra,^*,1)$ of type $(2,1,0)$ satisfying the following conditions,  
for all $x,y,z\in X$:\\
$(W_1)$ $1\ra x=x;$ \\
$(W_2)$ $(y\ra z)\ra ((z\ra x)\ra (y\ra x))=1;$ \\
$(W_3)$ $(x\ra y)\ra y=(y\ra x)\ra x;$ \\
$(W_4)$ $(x^*\ra y^*)\ra (y\ra x)=1$. 
\end{definition}
\noindent
Note that $(W_3)=($$\vee$-$comm)$. \\ 
Wajsberg algebras are bounded with $0=1^*$, and they are involutive. 
In any Wajsberg algebra $(X,\ra,^*,1)$ the following hold, for all $x,y,z\in X$ (\cite{Font1}): \\
$(W_5)$ $x\ra x=1;$ \\
$(W_6)$ $x\ra 1=1;$ \\
$(W_7)$ $x\ra (y\ra z)=y\ra (x\ra z)$. \\
It was proved in \cite{Font1} that Wajsberg algebras are term-equivalent to MV algebras. \\

The concept of quantum-B algebras was introduced by W. Rump and Y.C. Yang in \cite{Rump2,Rump1}. 
The quantum-B algebras generalize many implicational algebras, such as pseudo-effect algebras, residuated lattices, pseudo-MV/BL/MTL algebras, bounded non-commutative R$\ell$-monoids, pseudo-hoops, pseudo-BCK/BCI algebras. 

\begin{definition} \label{pr-40} $\rm($\cite{Rump1},\cite{Rump2}$\rm)$ 
A \emph{quantum-B algebra} is a structure $(X,\le,\ra,\rs)$ such that $(X,\le)$ is a partially ordered set and 
$\ra, \rs$ are two binary operations satisfying the following axioms, for all $x, y, z\in X$: \\
$(QB_1)$ $y\ra z\le (x\ra y)\ra (x\ra z);$ \\
$(QB_2)$ $y\rs z\le (x\rs y)\rs (x\rs z);$ \\
$(QB_3)$ $y\le z$ implies $x\ra y\le x\ra z;$ \\
$(QB_4)$ $x\le y\ra z$ iff $y\le x\rs z$. 
\end{definition} 

\noindent
A quantum-B algebra $X$ is said to be \emph{unital} if there exists an element $u\in X$ such that 
$u\ra x=u\rs x=x$, for all $x\in X$. The element $u$ is called a \emph{unit element} and the unit element is 
unique (\cite{Rump1}). 
A quantum-B algebra $(X,\le,\ra,\rs)$ is called \emph{commutative} if $x\ra y=x\rs y$, for all $x,y\in X$. 
According to \cite[Prop. 1.3.19]{Ior35} and \cite[Cor. 2]{Rump2}, pseudo-BCK algebras are term-equivalent to 
integral quantum-B algebras (i.e. unital quantum-B algebras $(X,\le,\ra,\rs,1)$ with $1$ verifying $x\le 1$, 
for all $x\in X$), hence BCK algebras are term-equivalent to commutative integral quantum-B algebras. \\

The supplement algebras were introduced by S. Gudder in \cite{Gudder} and they are a common set of axioms for 
MV and quantum-MV algebras. 

\begin{definition} \label{pr-10} $\rm($\cite{Gudder}$\rm)$ 
A \emph{suplement algebra} (\emph{S-algebra, for short}) is an algebra $(X,\oplus,^*,0,1)$ of type $(2,1,0,0)$   
satisfying the following axioms, for all $x, y, z\in X$: \\
$(S_1)$ $x\oplus y=y\oplus x;$ \\
$(S_2)$ $x\oplus (y\oplus z)=(x\oplus y)\oplus z;$ \\
$(S_3)$ $x\oplus x^*=1;$ \\
$(S_4)$ $x\oplus 0=x;$ \\
$(S_5)$ $x^{**}=x;$ \\
$(S_6)$ $0^*=1;$ \\
$(S_7)$ $x\oplus 1=1$.
\end{definition}

The following additional operations can be defined in a supplement algebra: \\
$\hspace*{3cm}$ $x\odot y=(x^*\oplus y^*)^*$, \\
$\hspace*{3cm}$ $x\Cap_S y=(x\oplus y^*)\odot y$, \\
$\hspace*{3cm}$ $x\Cup_S y=(x\odot y^*)\oplus y$. \\
Obviously, $(x\Cup_S y)^*=x^*\Cap_S y^*$ and $(x\Cap_S y)^*=x^*\Cup_S y^*$. \\ 
For all $x,y\in X$, we define $\le_S$ by $x\le_S y$ iff $x=x\Cap_S y$.  

\begin{definition} \label{pr-20} $\rm($\cite{Giunt2}$\rm)$ 
An \emph{(right-)MV algebra} is an S-algebra $(X,\oplus,^*,0,1)$ satisfying the following axiom, for all $x, y\in X$: \\
$(MV)$ $(x^*\oplus y)^*\oplus y=(x\oplus y^*)^*\oplus x$. 
\end{definition}

\begin{remark} \label{pr-20-10} 
The condition (MV) is a so-called \emph{\L ukasiewicz axiom} (\cite{DvPu}) and it is equivalent to 
$x\Cap_S y=y\Cap_S x$, for all $x,y\in X$. \\
Indeed, $(x^*\oplus y)^*\oplus y=((x^*\oplus y)\odot y^*)^*$ and 
$(x\oplus y^*)^*\oplus x=(y^*\oplus x)^*\oplus x=((y^*\oplus x)\odot x^*)^*$.
It follows that $((x^*\oplus y)\odot y^*)^*=((y\oplus x^*)\odot x^*)^*$, and replacing $x$ by $x^*$ and $y$ by $y^*$ 
we get $((x\oplus y^*)\odot y)^*=((y^*\oplus x)\odot x)^*$. 
Hence $(x\oplus y^*)\odot y=(y^*\oplus x)\odot x$, that is $x\Cap_S y=y\Cap_S x$. \\
As a consequence, condition (MV) is also equivalent to $x\Cup_S y=y\Cup_S x$, for all $x,y\in X$. 
\end{remark}

The quantum-MV algebras were introduced by R. Giuntini as right algebras. 

\begin{definition} \label{pr-30} $\rm($\cite{Giunt1}$\rm)$ 
A \emph{(right-)quantum-MV algebra} (\emph{QMV algebra, for short}) is an S-algebra $(X,\oplus,^*,0,1)$ satisfying the 
following axiom, for all $x, y, z\in X$: \\
$(QMV)$ $x\oplus ((x^*\Cap_S y)\Cap_S (z\Cap_S x^*))=(x\oplus y)\Cap_S (x\oplus z)$. 
\end{definition}

In this paper we will use the notion of left-quantum-MV algebras. 

\begin{definition} \label{pr-30-20} $\rm($\cite{Ior30}$\rm)$
A \emph{left-m-BE algebra} is an algebra $(X,\odot,^{*},1)$ of type $(2,1,0)$ satisfying the following properties, 
for all $x,y,z\in X$: \\ 
(PU) $1\odot x=x=x\odot 1;$ \\
(Pcomm) $x\odot y=y\odot x;$ \\
(Pass) $x\odot (y\odot z)=(x\odot y)\odot z;$ \\ 
(m-L) $x\odot 0=0;$ \\ 
(m-Re) $x\odot x^{*}=0$, \\
where $0:=1^*$. 
\end{definition}

A left-m-BE algebra $X$ is involutive if it satisfies condition $(DN)$ $x^{**}=x$, for all $x\in X$. 
In an involutive left-m-BE algebra the following operations and relations are defined, for all $x,y\in X$: \\ 
$\hspace*{3cm}$ $x\oplus y=(x^*\odot y^*)^*$ (obviously, $x\odot y=(x^*\oplus y^*)^*$), \\
$\hspace*{3cm}$ $x\wedge_m^M y=(x^*\odot y)^*\odot y=y\odot (y\odot x^*)^*$, \\
$\hspace*{3cm}$ $x\vee_m^M y=(x^*\wedge_m^M y^*)^*$, \\
$\hspace*{3cm}$ $x\le_m^M y$ iff $x=x\wedge_m^M y$, \\ 
$\hspace*{3cm}$ $x\le_m y$ iff $x\odot y^*=0$. 
                
\begin{definition} \label{pr-30-30} $\rm($\cite[Def. 3.10]{Ior34}$\rm)$
A \emph{(left-)quantum-MV algebra}, or a \\
\emph{(left-)QMV algebra} for short, is an involutive (left-)m-BE algebra
$(X,\odot,^{*},1)$ verifying the following axiom: for all $x,y,z\in X$, \\
(Pqmv) $x\odot ((x^*\vee_m^M y)\vee_m^M (z\vee_m^M x^*))=(x\odot y)\vee_m^M (x\odot z)$. 
\end{definition}

\begin{remark} \label{pr-30-40}
A. Iorgulescu and M. Kinyon defined and studied in \cite{Ior34} the notion of \emph{right-m-BE algebras} as 
algebras $(X,\oplus,^{*},0)$ of type $(2,1,0)$ satisfying the following properties: 
(SU) $0\oplus x=x=x\oplus 0$; (Scomm) $x\oplus y=y\oplus x$; (Sass) $x\oplus (y\oplus z)=(x\oplus y)\oplus z$; 
(m-L$^{R}$) $x\oplus 1=1$; (m-Re$^{R}$) $x\oplus x^{*}=1$, where $1:=0^*$. \
A right-m-BE algebra $X$ is involutive if it satisfies condition $(DN)$.  
They noted that the involutive right-m-BE algebras are the S-algebras defined by S. Gudder. \\
In an involutive right-m-BE algebra the following operation is defined, for all $x,y\in X$: \\ 
$\hspace*{2cm}$ $x\odot y=(x^*\oplus y^*)^*$ (obviously, $x\oplus y=(x^*\odot y^*)^*$), \\
The right-quantum-MV algebras can be defined as involutive right-m-BE algebras satisfying the following axiom, 
for all $x,y,z\in X$: \\
(Sqmv) $x\oplus [(x^*\wedge_m^M y)\wedge_m^M (z\wedge_m^M x^*)]=(x\oplus y)\wedge_m^M (x\oplus z)$. \\
According to \cite[Prop. 3.11]{Ior34}, conditions (Sqmv) and (Pqmv) are equivalent. 
The left and right-quantum-MV algebras were intensively studied by A. Iorgulescu in 
\cite{Ior30, Ior31, Ior32, Ior33, Ior34, Ior35}. 
\end{remark}

\begin{remarks} \label{pr-30-50} 
$(1)$ A right-MV algebra is an involutive right-m-BE algebra $(X,\oplus,^*,0)$ satisfying condition $(MV)$. \\
$(2)$ A left-MV algebra is an involutive left-m-BE algebra $(X,\odot,^*,1)$ satisfying the following condition for all $x,y\in X$: \\
$(MV^{'})$ $(x^*\odot y)^*\odot y=(y^*\odot x)^*\odot x$. \\
We shall further work with the definition of (left-)MV algebras.  
\end{remarks}

$\vspace*{1mm}$

\section{Quantum-Wajsberg algebras}

In this section, we redefine the quantum-MV algebras starting from involutive BE algebras by introducing and studying  the notion of quantum-Wajsberg algebras. We give a characterization of quantum-Wajsberg algebras, investigate their properties, and show that, in general, the quantum-Wajsberg algebras are not (commutative) quantum-B algebras. 

The (left-)BE algebras were introduced in \cite{Kim1} as algebras $(X,\ra,1)$ of type $(2,0)$ satisfying the following conditions, for all $x,y,z\in X$: \\
$(BE_1)$ $x\ra x=1;$ \\
$(BE_2)$ $x\ra 1=1;$ \\
$(BE_3)$ $1\ra x=x;$ \\
$(BE_4)$ $x\ra (y\ra z)=y\ra (x\ra z)$. \\
A relation ``$\le$" is defined on $X$ by $x\le y$ iff $x\ra y=1$. 
A BE algebra $X$ is \emph{bounded} if there exists $0\in X$ such that $0\le x$, for all $x\in X$. 
In a bounded BE algebra $(X,\ra,0,1)$ we define $x^*=x\ra 0$, for all $x\in X$. 
A bounded BE algebra $X$ is called \emph{involutive} if $x^{**}=x$, for any $x\in X$. \\
Obviously, any (left-)BCK algebra is a (left-)BE algebra, but the exact connection between BE algebras and 
BCK algebras is made in the papers \cite{Ior16}: a BCK algebra is a BE algebra satisfying $(BCK_4)$ (antisymmetry) 
and $(BCK_1)$. 

\begin{remark} \label{qbe-05}
According to \cite[Cor. 17.1.3]{Ior35}, the involutive (left-)BE algebras $(X,\ra,^*,1)$ are term-equivalent to involutive (left-)m-BE algebras $(X,\odot,^*,1)$, by the mutually inverse transformations 
(\cite{Ior30, Ior35}): \\ 
$\hspace*{2cm}$ $\Phi:$\hspace*{0.2cm}$ x\odot y:=(x\ra y^*)^*$ $\hspace*{0.1cm}$ and  
                $\hspace*{0.1cm}$ $\Psi:$\hspace*{0.2cm}$ x\ra y:=(x\odot y^*)^*$. 
\end{remark}
\noindent
In what follows, by BE algebras we understand the left-BE algebras. 

\begin{lemma} \label{qbe-10} Let $(X,\ra,1)$ be a BE algebra. Then, the following hold, for all $x,y,z\in X$: \\
$(1)$ $x\ra (y\ra x)=1;$ \\
$(2)$ $x\le (x\ra y)\ra y$. \\
If $X$ is bounded, then: \\
$(3)$ $x\ra y^*=y\ra x^*;$ \\
$(4)$ $x\le x^{**}$. \\
If $X$ is involutive, then: \\
$(5)$ $x^*\ra y=y^*\ra x;$ \\
$(6)$ $x^*\ra y^*=y\ra x;$ \\
$(7)$ $(x^*\ra y)^*\ra z=x^*\ra (y^*\ra z)$.  
\end{lemma}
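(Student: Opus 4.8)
The plan is to derive all seven identities from the BE axioms, using the exchange axiom $(BE_4)$ as the main engine and bootstrapping the later parts from the earlier ones. Parts (1) and (2) need only the unbounded axioms; parts (3)--(4) use the bottom element $0$; and parts (5)--(7) additionally invoke involutivity $(DN)$.

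First I would prove (1) and (2) by a single application of $(BE_4)$ followed by a collapse. For (1), I rewrite $x\ra(y\ra x)$ as $y\ra(x\ra x)$ by $(BE_4)$, then use $(BE_1)$ and $(BE_2)$ to get $y\ra 1=1$. For (2), which asks $x\ra((x\ra y)\ra y)=1$, I swap the outer arguments by $(BE_4)$ to obtain $(x\ra y)\ra(x\ra y)$, which is $1$ by $(BE_1)$.

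Next, for the bounded case, (3) is immediate: expanding $y^*=y\ra 0$ and applying $(BE_4)$ gives $x\ra(y\ra 0)=y\ra(x\ra 0)=y\ra x^*$. Part (4) is then just (2) specialized to $y=0$, since $(x\ra 0)\ra 0=x^{**}$.

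Finally, under involutivity I would chain the earlier parts. Part (6) follows from (3) with arguments $x^*,y$, giving $x^*\ra y^*=y\ra x^{**}=y\ra x$ by $(DN)$. Part (5) follows from (3) applied to $x^*,y^*$, namely $x^*\ra y^{**}=y^*\ra x^{**}$, after discharging the double negations with $(DN)$. The step requiring the most care is (7): here I would first use (5) to rewrite $(x^*\ra y)^*\ra z$ as $z^*\ra(x^*\ra y)$, then apply $(BE_4)$ to exchange the outer arguments and obtain $x^*\ra(z^*\ra y)$, and finally use (5) once more on the inner term $z^*\ra y=y^*\ra z$ to reach $x^*\ra(y^*\ra z)$. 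The only real obstacle is bookkeeping---keeping track of which variables play which role when invoking (3) and (5), and ensuring each double negation is removed by $(DN)$ at the right moment---rather than any genuine conceptual difficulty.
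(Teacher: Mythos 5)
Your proposal is correct and follows essentially the same route as the paper: $(BE_4)$ plus $(BE_1)$, $(BE_2)$ for (1)--(2), specialization to $z:=0$ and $y:=0$ for (3)--(4), substitution into (3) for (5)--(6), and the identical three-step chain (5), $(BE_4)$, (5) for (7). The only difference is that you obtain (6) directly from (3) while the paper routes it through (5), which is immaterial.
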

\begin{proof} 
$(1)$: By $(BE_4)$, $(BE_1)$ and $(BE_2)$, $x\ra (y\ra x)=y\ra (x\ra x)=y\ra 1=1$. \\
$(2)$: Similarly, $x\ra ((x\ra y)\ra y)=(x\ra y)\ra (x\ra y)=1$, hence $x\le (x\ra y)\ra y$. \\ 
$(3)$: It follows by $(BE_4)$ for $z:=0$. \\
$(4)$: It follows by $(2)$ for $y:=0$. \\
$(5)$: Replace $x$ by $x^*$ and $y$ by $y^*$ in $(3)$. \\
$(6)$: Replace $y$ by $y^*$ in $(5)$. \\
$(7)$: Using $(5)$ and $(BE_4)$, we get: \\
$\hspace*{2.00cm}$ $(x^*\ra y)^*\ra z=z^*\ra (x^*\ra y)=x^*\ra (z^*\ra y)$ \\
$\hspace*{4.20cm}$ $=x^*\ra (y^*\ra z)$. 
\end{proof}

\noindent
If a BE algebra $X$ is involutive, we define the operation: \\
$\hspace*{3cm}$ $x\wedge y=(x^*\vee y^*)^*=((x^*\ra y^*)\ra y^*)^*$, \\
and the relation $\le_Q$ by: \\
$\hspace*{3cm}$ $x\le_Q y$ iff $x=x\wedge y$. \\
One can easily check that, in the involutive m-BE algebras,  
$\wedge_m^M= \Cap_S$, $\vee_m^M=\Cup_S$ and $\le_m^M \Longleftrightarrow \le_S$. 
Moreover, in the involutive case, by $\Phi$ and $\Psi$, we have $x\wedge y=x\wedge_m^M y$, $x\vee y=x\vee_m^M y$ 
and $x\le y \Longleftrightarrow x\le_m y$ (\cite[Th. 17.1.1]{Ior35}), $x\le_Q y \Longleftrightarrow x\le_m^M y$. 

\begin{proposition} \label{qbe-20} Let $X$ be an involutive BE algebra. 
Then, the following hold, for all $x,y\in X$: \\
$(1)$ $x\le_Q y$ implies $x=y\wedge x$ and $y=x\vee y;$ \\
$(2)$ $\le_Q$ is reflexive and antisymmetric; \\
$(3)$ $x\vee y=(x^*\wedge y^*)^*;$ \\ 
$(4)$ $x\le_Q y$ implies $x\le y;$ \\
$(5)$ $0\le_Q x \le_Q 1;$ \\
$(6)$ $0\wedge x=x\wedge 0=0$ and $1\wedge x=x\wedge 1=x;$ \\
$(7)$ $x\wedge (y\wedge x)=y\wedge x$ and $x\wedge (x\wedge y)=x\wedge y$. 
\end{proposition}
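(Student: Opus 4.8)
The plan is to first isolate a handful of order-theoretic facts about the derived operations $\vee$, $\wedge$ and the involution, and then read off all seven items from them. Writing $a\vee b=(a\ra b)\ra b$, I would first record that both $a\le a\vee b$ and $b\le a\vee b$ hold: the first is Lemma \ref{qbe-10}$(2)$ with $x:=a$, $y:=b$, and the second is Lemma \ref{qbe-10}$(1)$ applied with $a\ra b$ in place of its $y$, giving $b\le (a\ra b)\ra b$. Next I would note that the involution is antitone, i.e. $a\le b$ forces $b^*\le a^*$; this is immediate from Lemma \ref{qbe-10}$(6)$, since $b^*\ra a^*=a\ra b=1$. Combining antitonicity with $(DN)$ and the two inequalities above yields the pivotal facts $x\wedge y\le x$ and $x\wedge y\le y$ (apply $^*$ to $a\le a\vee b$ and $b\le a\vee b$, with $a:=x^*$, $b:=y^*$). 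I would also record the equivalence $a\vee b=b\iff a\le b$: the forward direction uses $a\ra b=1$ and $(BE_3)$, and the backward direction uses $a\le a\vee b$.

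With these in hand the routine items fall out quickly. Item $(3)$ is just the definition of $\wedge$ together with $(DN)$, as $(x^*\wedge y^*)^*=((x^{**}\vee y^{**})^*)^*=x\vee y$. Item $(6)$ is a direct evaluation using $0=1^*$, $0^*=1$ and $(BE_1)$--$(BE_3)$: for instance $x^*\vee 1=1$ and $1\vee x^*=1$ give $x\wedge 0=0\wedge x=0$, while $x^*\vee 0=x^*$ and $0\vee x^*=x^*$ give $x\wedge 1=1\wedge x=x$. Item $(5)$ is then immediate from $(6)$, and item $(4)$ is immediate from the inequality $x\wedge y\le y$ established above, since $x\le_Q y$ means precisely $x=x\wedge y$.

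For item $(1)$ I would rewrite $x\le_Q y$ as $x^*=x^*\vee y^*$ (apply $^*$ and $(DN)$). From $y^*\le x^*\vee y^*=x^*$ I get $y^*\le x^*$, hence $x\le y$ by antitonicity, so $x\vee y=(x\ra y)\ra y=1\ra y=y$, which is the second claimed equality. For the first, $y^*\le x^*$ gives $y^*\vee x^*=x^*$ by the equivalence above, and taking $^*$ yields $y\wedge x=x$. Item $(7)$ is handled in the same spirit: writing $x\wedge(y\wedge x)=(x^*\vee(y^*\vee x^*))^*$, it suffices to prove $x^*\vee(y^*\vee x^*)=y^*\vee x^*$, i.e. $x^*\le y^*\vee x^*$, which is one of the basic inequalities; the second identity $x\wedge(x\wedge y)=x\wedge y$ is identical, using $x^*\le x^*\vee y^*$.

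The one genuine obstacle is the antisymmetry in item $(2)$. Reflexivity is a one-line computation, $x\wedge x=(x^*\vee x^*)^*=(1\ra x^*)^*=x$. For antisymmetry, $x\le_Q y$ and $y\le_Q x$ translate, with $a:=x^*$ and $b:=y^*$, into $a=(a\ra b)\ra b$ and $b=(b\ra a)\ra a$. The temptation is to reduce to antisymmetry of $\le$, but $\le$ need not be antisymmetric in a BE algebra, so this is exactly where a real identity is needed. The trick I would use is to substitute the second equation into $a\ra b$: by $(BE_4)$, $a\ra b=a\ra((b\ra a)\ra a)=(b\ra a)\ra(a\ra a)=(b\ra a)\ra 1=1$, using $(BE_1)$ and $(BE_2)$; feeding $a\ra b=1$ back into $a=(a\ra b)\ra b=1\ra b=b$ gives $x^*=y^*$, hence $x=y$ by $(DN)$. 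I expect this substitution step to be the crux, as it is precisely what compensates for the missing antisymmetry of the underlying relation $\le$.
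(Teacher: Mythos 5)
Your proof is correct, and it takes a noticeably different route from the paper's. The paper proves $(1)$, $(4)$ and $(7)$ by brute-force expansion of $\wedge$ and repeated applications of $(BE_4)$, $(BE_1)$, $(BE_2)$ inside long chains of starred expressions, and then gets antisymmetry as a corollary of $(1)$ (from $x\le_Q y$ and $y\le_Q x$ one reads off $y=y\wedge x=x$). You instead front-load three reusable order facts --- $a,b\le a\vee b$, antitonicity of $^*$ (hence $x\wedge y\le x,y$), and the equivalence $a\le b\iff a\vee b=b$ --- after which $(1)$, $(4)$ and $(7)$ collapse to one-line deductions; this is essentially the content of Proposition \ref{qbe-30}$(3)$, which the paper only establishes afterwards, so you are reorganizing the material rather than inventing new machinery. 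Your treatment of antisymmetry is also independent of $(1)$: the substitution $a\ra b=a\ra((b\ra a)\ra a)=(b\ra a)\ra(a\ra a)=1$ followed by $a=(a\ra b)\ra b=b$ is exactly the computation hidden inside the paper's proof of $(1)$, just extracted and applied directly, and you correctly identify it as the step that compensates for the failure of antisymmetry of $\le$ in a general BE algebra. The trade-off is that the paper's version is self-contained line-by-line symbol pushing, while yours is shorter, exposes the order-theoretic structure, and makes the individual items easier to verify; both are complete.
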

\begin{proof}
$(1)$: If $x\le_Q y$, then $x=x\wedge y$, and by $(BE_4)$, we have: \\
$\hspace*{2cm}$ $y\wedge x=y\wedge (x\wedge y)=((y^*\ra (x\wedge y)^*)\ra (x\wedge y)^*)^*$ \\
$\hspace*{3cm}$ $=((y^*\ra ((x^*\ra y^*)\ra y^*))\ra (x\wedge y)^*)^*$ \\
$\hspace*{3cm}$ $=(((x^*\ra y^*)\ra (y^*\ra y^*))\ra (x\wedge y)^*)^*$ \\
$\hspace*{3cm}$ $=(((x^*\ra y^*)\ra 1)\ra (x\wedge y)^*)^*$ \\
$\hspace*{3cm}$ $=(1\ra (x\wedge y)^*)^*=(x\wedge y)^{**}=x\wedge y=x$. \\ 
Similarly we get: \\
$\hspace*{2cm}$ $x\vee y=(x\wedge y)\vee y=((x\wedge y)\ra y)\ra y$ \\
$\hspace*{3cm}$ $=(((x^*\ra y^*)\ra y^*)^*\ra y)\ra y$ \\
$\hspace*{3cm}$ $=(y^*\ra ((x^*\ra y^*)\ra y^*))\ra y$ \\
$\hspace*{3cm}$ $=((x^*\ra y^*)\ra (y^*\ra y^*))\ra y$ \\
$\hspace*{3cm}$ $=((x^*\ra y^*)\ra 1)\ra y=1\ra y=y$. \\ 
$(2)$: Since $x\wedge x=((x^*\ra x^*)\ra x^*)^*=(1\ra x^*)^*=x$, $\le_Q$ is reflexive. \\
If $x\le_Q y$ and $y\le_Q x$, we have $y=y\wedge x=x$, by $(1)$. Hence $\le_Q$ is antisymmetric. \\
$(3)$: It is obvious. \\
$(4)$: If $x\le_Q y$, then $x=x\wedge y$, and by Lemma \ref{qbe-10} and $(BE_4)$ we get: \\
$\hspace*{2.00cm}$ $x\ra y=(x\wedge y)\ra y=((x^*\ra y^*)\ra y^*)^*\ra y$ \\
$\hspace*{3.10cm}$ $=y^*\ra ((x^*\ra y^*)\ra y^*)=(x^*\ra y^*)\ra (y^*\ra y^*)$ \\
$\hspace*{3.10cm}$ $=(x^*\ra y^*)\ra 1=1$. \\
Hence $x\le y$. \\
$(5)$, $(6)$ follow by simple computations. \\
$(7)$: Applying Lemma \ref{qbe-10}, we get: \\
$\hspace*{2.10cm}$ $x\wedge (y\wedge x)=((x^*\ra (y\wedge x)^*)\ra (y\wedge x)^*)^*$ \\
$\hspace*{4.00cm}$ $=((x^*\ra (y^*\vee x^*))\ra (y\wedge x)^*)^*$ \\
$\hspace*{4.00cm}$ $=((x^*\ra ((y^*\ra x^*)\ra x^*))\ra (y\wedge x)^*)^*$ \\
$\hspace*{4.00cm}$ $=((y^*\ra x^*)\ra (x^*\ra x^*)\ra (y\wedge x)^*)^*$ \\
$\hspace*{4.00cm}$ $=(((y^*\ra x^*)\ra 1)\ra (y\wedge x)^*)^*$ \\
$\hspace*{4.00cm}$ $=(1\ra (y\wedge x)^*)^*=y\wedge x$, and \\
$\hspace*{2.10cm}$ $x\wedge (x\wedge y)=((x^*\ra (x\wedge y)^*)\ra (x\wedge y)^*)^*$ \\
$\hspace*{4.00cm}$ $=((x^*\ra ((x^*\ra y^*)\ra y^*))\ra (x\wedge y)^*)^*$ \\
$\hspace*{4.00cm}$ $=(((x^*\ra y^*)\ra (x^*\ra y^*))\ra (x\wedge y)^*)^*$ \\
$\hspace*{4.00cm}$ $=(1\ra (x\wedge y)^*)^*=(x\wedge y)^{**}=x\wedge y$. 
\end{proof}

A BE algebra $X$ is said to be \emph{self distributive} if $x\ra (y\ra z)=(x\ra y)\ra (x\ra z)$, 
for all $x,y,z\in X$ (\cite[Def. 7]{Kim1}).  
Note that the property ``self distributive" of BE algebras is the same as the property ``positive implicative" of 
BCK algebras (\cite{Iseki}). 

\begin{proposition} \label{qbe-30} Let $X$ be an involutive BE algebra. 
Then, the following hold, for all $x,y,z\in X$: \\
$(1)$ $(x\wedge y)\ra z=(y\ra x)\ra (y\ra z);$ \\
$(2)$ $z\ra (x\vee y)=(x\ra y)\ra (z\ra y);$ \\
$(3)$ $x\wedge y\le x,y\le x\vee y$. \\
If $X$ is self distributive, then: \\
$(4)$ $x\wedge y=y\wedge x$ and $x\vee y=y\vee x$. 
\end{proposition}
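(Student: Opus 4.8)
The plan is to treat parts (1)--(3) as direct consequences of Lemma \ref{qbe-10} together with the defining formulas $x\wedge y=((x^*\ra y^*)\ra y^*)^*$ and $x\vee y=(x\ra y)\ra y$ (the latter being the form of $\vee$ in the involutive case, consistent with the definition of $\wedge$), and then to extract part (4) from part (1) by exploiting self distributivity. Part (1) is the computational engine, so I would establish it first. Writing $p:=x^*\ra y^*$, we have $x\wedge y=(p\ra y^*)^*$, hence $(x\wedge y)\ra z=(p\ra y^*)^*\ra z$. Since $X$ is involutive, $p=p^{**}$, so Lemma \ref{qbe-10}(7) applies with $x^*$ taken to be $p$ and $y$ taken to be $y^*$, giving $(p\ra y^*)^*\ra z=p\ra(y\ra z)$; finally Lemma \ref{qbe-10}(6) rewrites $p=x^*\ra y^*=y\ra x$, so that $(x\wedge y)\ra z=(y\ra x)\ra(y\ra z)$.

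For part (2) I would simply substitute $x\vee y=(x\ra y)\ra y$ and apply $(BE_4)$ to commute the two antecedents $z$ and $x\ra y$, obtaining $z\ra((x\ra y)\ra y)=(x\ra y)\ra(z\ra y)$. Part (3) then drops out of (1) and (2) by specializing $z$: taking $z:=x$ and $z:=y$ in (1) gives $(x\wedge y)\ra x=(y\ra x)\ra(y\ra x)=1$ and $(x\wedge y)\ra y=(y\ra x)\ra 1=1$ via $(BE_1)$ and $(BE_2)$, so $x\wedge y\le x$ and $x\wedge y\le y$; symmetrically, $z:=x$ and $z:=y$ in (2) yield $x\ra(x\vee y)=1$ and $y\ra(x\vee y)=1$, so $x\le x\vee y$ and $y\le x\vee y$.

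The heart of the matter is part (4), where self distributivity is used to symmetrize part (1). Applying (1) to the pair $(x,y)$ and, separately, to $(y,x)$, I would rewrite both right-hand sides using self distributivity as $(y\ra x)\ra(y\ra z)=y\ra(x\ra z)$ and $(x\ra y)\ra(x\ra z)=x\ra(y\ra z)$, and these two agree by $(BE_4)$. This shows $(x\wedge y)\ra z=(y\wedge x)\ra z$ for every $z\in X$. The remaining, and I expect most delicate, point is to pass from equality of all implications to equality of the elements themselves; this is where involutivity is essential. Specializing $z:=0$ gives $(x\wedge y)^*=(y\wedge x)^*$, whence $x\wedge y=y\wedge x$ after applying $(DN)$. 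The dual identity $x\vee y=y\vee x$ then follows at once from $x\vee y=(x^*\wedge y^*)^*$ (Proposition \ref{qbe-20}(3)) and the commutativity of $\wedge$ just proved.

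The one subtlety to keep in mind throughout part (4) is that $\le$ need not be antisymmetric for a general involutive BE algebra, so the element equality $x\wedge y=y\wedge x$ cannot be recovered by proving two inequalities; it is precisely the specialization $z:=0$ combined with $(DN)$ that circumvents the missing antisymmetry, and this is the step I regard as the genuine obstacle.
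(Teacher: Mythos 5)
Your proposal is correct and follows essentially the same route as the paper: part (1) by the same rewriting of $((x^*\ra y^*)\ra y^*)^*\ra z$ via Lemma \ref{qbe-10} (you package the step as Lemma \ref{qbe-10}(7), which the paper proves from (5) and $(BE_4)$ — the identical computation), part (2) by $(BE_4)$, part (3) by specializing $z$, and part (4) by deriving $(x\wedge y)\ra z=(y\wedge x)\ra z$ from self distributivity and then setting $z:=0$ and applying involutivity. Your closing remark about why the $z:=0$ specialization is needed in place of antisymmetry is exactly the right observation.
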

\begin{proof}
$(1)$: Using Lemma \ref{qbe-10}, we get: \\
$\hspace*{1.00cm}$ $(x\wedge y)\ra z=((x^*\ra y^*)\ra y^*)^*\ra z=z^*\ra ((x^*\ra y^*)\ra y^*)$ \\
$\hspace*{3.00cm}$ $=(x^*\ra y^*)\ra (z^*\ra y^*)=(y\ra x)\ra (y\ra z)$. \\
$(2)$: By $(BE_4)$, we have: \\
$\hspace*{2.00cm}$ $z\ra (x\vee y)=z\ra ((x\ra y)\ra y)=(x\ra y)\ra (z\ra y)$. \\
$(3)$: It follows from $(1)$ and $(2)$ replacing $z$ by $x$ and $y$, respectively. \\
$(4)$: Since $X$ is self distributive, using $(BE_4)$ we have: \\
$\hspace*{2.00cm}$ $(x\ra y)\ra (x\ra z)=(y\ra x)\ra (y\ra z)$, \\ 
and by $(1)$ we get: $(x\wedge y)\ra z=(y\wedge x)\ra z$. 
Taking $z:=0$, it follows that $(x\wedge y)^*=(y\wedge x)^*$, hence $x\wedge y=y\wedge x$. 
Using Proposition \ref{qbe-20}$(3)$, we get $x\vee y=y\vee x$. 
\end{proof}

\begin{remark} \label{qbe-40} 
In general, $x\le y$ does not imply $x\le_Q y$, and the operation $\le_Q$ is not transitive. \\
Indeed consider the following example (see \cite[Cor. 2.1.3]{DvPu}). \\
Let $X=\{0,1,2,3,4,5,6\}$ and let $(X,\ra,^*,1)$ be the involutive BE algebra with $\ra$ and the 
corresponding operation $\wedge$ given in the following tables:  
\[
\begin{array}{c|ccccccc}
\ra & 0 & 1 & 2 & 3 & 4 & 5 & 6 \\ \hline
0   & 1 & 1 & 1 & 1 & 1 & 1 & 1 \\ 
1   & 0 & 1 & 2 & 3 & 4 & 5 & 6 \\ 
2   & 2 & 1 & 1 & 1 & 1 & 1 & 3 \\ 
3   & 4 & 1 & 1 & 1 & 2 & 3 & 5 \\
4   & 3 & 1 & 1 & 1 & 1 & 1 & 1 \\
5   & 6 & 1 & 3 & 1 & 5 & 1 & 1 \\
6   & 5 & 1 & 1 & 1 & 3 & 1 & 1 
\end{array}
\hspace{10mm}
\begin{array}{c|ccccccc}
\wedge & 0 & 1 & 2 & 3 & 4 & 5 & 6 \\ \hline
0    & 0 & 0 & 0 & 0 & 0 & 0 & 0 \\ 
1    & 0 & 1 & 2 & 3 & 4 & 5 & 6 \\ 
2    & 0 & 2 & 2 & 3 & 4 & 6 & 6 \\ 
3    & 0 & 3 & 2 & 0 & 4 & 0 & 6 \\
4    & 0 & 4 & 2 & 0 & 4 & 0 & 4 \\
5    & 0 & 5 & 2 & 2 & 4 & 5 & 6 \\
6    & 0 & 6 & 0 & 6 & 4 & 5 & 6 
\end{array}
\]
where $x^*=x\ra 0$. 
We can see that $3\ra 2=1$, hence $3\le 2$. But $3\wedge 2=2\ne 3$, that is $3\nleq_Q 2$. 
We also have $4=4\wedge 6$ and $6=6\wedge 3$, so $4\le_Q 6$ and $6\le_Q 3$. But $4\neq 0=4\wedge 3$, that is $4\nleq_Q 3$. 
It follows that $\le_Q$ is not transitive. 
\end{remark}

\begin{definition} \label{qbe-70} 
A \emph{(left-)quantum-Wajsberg algebra} (\emph{QW algebra, for short}) is an involutive BE algebra 
$(X,\ra,^*,1)$ verifying the following axiom, for all $x,y,z\in X$: \\
(QW) $x\ra ((x\wedge y)\wedge (z\wedge x))=(x\ra y)\wedge (x\ra z)$. 
\end{definition}

Unless otherwise stated, in what follows, by MV algebras, Wajsberg algebras, quantum-MV algebras and quantum-Wajsberg algebras we understand the left-MV algebras, left-Wajsberg algebras, left-quantum-MV algebras and left-quantum-Wajsberg algebras, respectively.

\begin{proposition} \label{qbe-70-10} In any quantum-Wajsberg algebra $X$, the following holds, for all $x,y\in X$:\\
($QW_1$) $x\ra (x\wedge y)=x\ra y$. 
\end{proposition}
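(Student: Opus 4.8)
The plan is to obtain $(QW_1)$ purely by specializing the defining axiom $(QW)$ twice and then stitching the two instances together through the common term $(x\wedge y)\wedge x$. The reason a single substitution will not suffice is that $\wedge$ need not be commutative on an involutive BE algebra, so one cannot simply swap the arguments of $x\wedge y$; instead the two free variables $y,z$ in $(QW)$ must be collapsed onto $x$ in two different ways.

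First I would set $z:=x$ in $(QW)$. Since $x\wedge x=x$ (as computed in the proof of Proposition \ref{qbe-20}$(2)$), $x\ra x=1$ by $(BE_1)$, and $a\wedge 1=a$ by Proposition \ref{qbe-20}$(6)$, the axiom collapses to
\[
x\ra\bigl((x\wedge y)\wedge x\bigr)=x\ra y,
\]
an identity I will call $(\mathrm{A})$.

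Next I would set $y:=x$ in $(QW)$. Again $x\wedge x=x$, and now the inner left-hand term becomes $x\wedge(z\wedge x)$, which equals $z\wedge x$ by Proposition \ref{qbe-20}$(7)$; on the right $x\ra x=1$ and $1\wedge a=a$ by Proposition \ref{qbe-20}$(6)$. Hence $(QW)$ reduces to
\[
x\ra(z\wedge x)=x\ra z \quad\text{for all } z\in X,
\]
an identity I will call $(\mathrm{B})$.

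Finally I would instantiate $(\mathrm{B})$ at $z:=x\wedge y$, which gives $x\ra\bigl((x\wedge y)\wedge x\bigr)=x\ra(x\wedge y)$, and compare this with $(\mathrm{A})$: the two left-hand sides coincide, so $x\ra(x\wedge y)=x\ra y$, which is exactly $(QW_1)$. The only delicate point is the treatment of the nested meets $x\wedge(z\wedge x)$ and $(x\wedge y)\wedge x$ while $\wedge$ is possibly non-commutative; this is handled entirely by Proposition \ref{qbe-20}$(7)$, which rewrites $x\wedge(z\wedge x)=z\wedge x$ \emph{without} presuming the false-in-general identity $(x\wedge y)\wedge x=x\wedge y$. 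Beyond carefully tracking which argument of $\wedge$ is being simplified at each step, I anticipate no genuine obstacle.
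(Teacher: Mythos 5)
Your proof is correct and follows essentially the same strategy as the paper's: both arguments specialize the axiom $(QW)$ twice and meet at the common term $x\ra\bigl((x\wedge y)\wedge x\bigr)$, relying on Proposition \ref{qbe-20}$(6)$,$(7)$ for the simplifications. The only difference is that you collapse the spare variables of $(QW)$ onto $x$ (using $x\wedge x=x$ and $x\ra x=1$) where the paper collapses them onto $1$ (using $x\wedge 1=1\wedge x=x$ and $x\ra 1=1$); this is a cosmetic variation, not a different route.
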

\begin{proof}
Using (QW) and Proposition \ref{qbe-20}$(7)$, we get: \\
$\hspace*{0.6cm}$ $x\ra (x\wedge y)=1\wedge(x\ra (x\wedge y))=(x\ra 1)\wedge (x\ra (x\wedge y))$ \\
$\hspace*{2.5cm}$ $=x\ra ((x\wedge 1)\wedge ((x\wedge y)\wedge x))=x\ra (x\wedge ((x\wedge y)\wedge x))$ \\
$\hspace*{2.5cm}$ $=x\ra ((x\wedge y)\wedge x)=x\ra ((x\wedge y)\wedge (1\wedge x))$ \\
$\hspace*{2.5cm}$ $=(x\ra y)\wedge (x\ra 1)=(x\ra y)\wedge 1=x\ra y$. \\
(Replacing $y$ by $x\wedge y$ in Proposition \ref{qbe-20}$(7)$, we have $x\wedge ((x\wedge y)\wedge x)=(x\wedge y)\wedge x$).  
\end{proof}

\begin{proposition} \label{qbe-70-20} Let $X$ be an involutive BE algebra satisfying property ($QW_1$). 
Then, the following holds, for all $x,y,z\in X$:\\
$\hspace*{3cm}$ $(x\wedge y)\wedge (z\wedge x)=y\wedge (z\wedge x)$. 
\end{proposition}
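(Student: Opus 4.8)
The plan is to reduce the claimed identity between two $\wedge$-expressions to a single implicational identity built on the common element $w:=z\wedge x$. Recall from the definition of $\wedge$ that $a\wedge b=((a^*\ra b^*)\ra b^*)^*$. Hence the goal $(x\wedge y)\wedge w=y\wedge w$ will follow immediately once I establish
$$(x\wedge y)^*\ra w^*=y^*\ra w^*,$$
because appending ``$\ra w^*$'' and then ``$\,{}^*$'' to both sides of this equation reproduces $(x\wedge y)\wedge w$ on the left and $y\wedge w$ on the right. So the whole proposition collapses to proving one equality of implications.

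Next I would put that equality into a workable shape. By Lemma \ref{qbe-10}$(6)$ we have $(x\wedge y)^*\ra w^*=w\ra(x\wedge y)$ and $y^*\ra w^*=w\ra y$, so it suffices to prove $w\ra(x\wedge y)=w\ra y$, i.e. $(z\wedge x)\ra(x\wedge y)=(z\wedge x)\ra y$. Now I apply Proposition \ref{qbe-30}$(1)$, which peels the meet off the antecedent: reading it as $(z\wedge x)\ra t=(x\ra z)\ra(x\ra t)$, the choices $t:=x\wedge y$ and $t:=y$ give
$$(z\wedge x)\ra(x\wedge y)=(x\ra z)\ra\bigl(x\ra(x\wedge y)\bigr),\qquad (z\wedge x)\ra y=(x\ra z)\ra(x\ra y).$$
Finally $(QW_1)$ supplies $x\ra(x\wedge y)=x\ra y$, so the two right-hand sides coincide and both implications equal $(x\ra z)\ra(x\ra y)$, which closes the argument.

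The only genuinely nonroutine step is the opening reduction: one must notice that both sides of the target are $\wedge$'s with the same second argument $w=z\wedge x$, and that $w$ carries the factor $x$ in its first slot. This alignment is exactly what lets Proposition \ref{qbe-30}$(1)$ strip off the common prefix $(x\ra z)\ra$ and expose the subterm $x\ra(x\wedge y)$ to which $(QW_1)$ applies; without it $(QW_1)$ could not be invoked. Everything after the reduction is a mechanical substitution through Lemma \ref{qbe-10}$(6)$, Proposition \ref{qbe-30}$(1)$ and $(QW_1)$, so I expect no further difficulty.
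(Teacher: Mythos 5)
Your proof is correct and takes essentially the same route as the paper's: both arguments reduce the claim to the equality $(x\wedge y)^*\ra(z\wedge x)^*=y^*\ra(z\wedge x)^*$ and exploit the factor $x$ inside $z\wedge x=((z^*\ra x^*)\ra x^*)^*$ to expose the subterm $x\ra(x\wedge y)$, to which $(QW_1)$ applies. The only difference is organizational: you channel the $(BE_4)$-manipulations through Proposition \ref{qbe-30}$(1)$ and Lemma \ref{qbe-10}$(6)$, whereas the paper carries out the same rewriting inline.
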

\begin{proof}
Using ($QW_1$) and $(BE_4)$, we get: \\
$\hspace*{1.30cm}$ $(x\wedge y)\wedge (z\wedge x)=(((x\wedge y)^*\ra (z\wedge x)^*)\ra (z\wedge x)^*)^*$ \\
$\hspace*{4.00cm}$ $=(((x\wedge y)^*\ra (z^*\vee x^*))\ra (z\wedge x)^*)^*$ \\
$\hspace*{4.00cm}$ $=(((x\wedge y)^*\ra ((z^*\ra x^*)\ra x^*))\ra (z\wedge x)^*)^*$ \\
$\hspace*{4.00cm}$ $=(((z^*\ra x^*)\ra ((x\wedge y)^*\ra x^*))\ra (z\wedge x)^*)^*$ \\
$\hspace*{4.00cm}$ $=(((z^*\ra x^*)\ra (x\ra (x\wedge y)))\ra (z\wedge x)^*)^*$ \\
$\hspace*{4.00cm}$ $=(((z^*\ra x^*)\ra (x\ra y))\ra (z\wedge x)^*)^*$ \\
$\hspace*{4.00cm}$ $=(((z^*\ra x^*)\ra (y^*\ra x^*))\ra (z\wedge x)^*)^*$ \\
$\hspace*{4.00cm}$ $=((y^*\ra ((z^*\ra x^*)\ra x^*))\ra (z\wedge x)^*)^*$ \\
$\hspace*{4.00cm}$ $=((y^*\ra (z^*\vee x^*))\ra (z\wedge x)^*)^*$ \\
$\hspace*{4.00cm}$ $=((y^*\ra (z\wedge x)^*)\ra (z\wedge x)^*)^*$ \\
$\hspace*{4.00cm}$ $=y\wedge (z\wedge x)$. \\
(By ($QW_1$), $x\ra (x\wedge y)=x\ra y$, and by $(BE_4)$, 
$(z^*\ra x^*)\ra (y^*\ra x^*)=y^*\ra ((z^*\ra x^*)\ra x^*)=y^*\ra (z^*\vee x^*)$).
\end{proof}

\begin{corollary} \label{qbe-70-30} In any quantum-Wajsberg algebra $X$ the following holds, for all $x,y,z\in X$: \\
($QW_2$) $x\ra (y\wedge (z\wedge x))=(x\ra y)\wedge (x\ra z)$.
\end{corollary}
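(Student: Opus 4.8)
The plan is to read ($QW_2$) as a direct simplification of axiom (QW), using the auxiliary identity established in Proposition~\ref{qbe-70-20}. The only ingredient needed beyond (QW) itself is a rewriting of the term $(x\wedge y)\wedge(z\wedge x)$ appearing inside the left-hand side of (QW).

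First I would note that every quantum-Wajsberg algebra satisfies property ($QW_1$): this is exactly the content of Proposition~\ref{qbe-70-10}. Consequently the hypothesis of Proposition~\ref{qbe-70-20} is met, and that proposition yields the identity $(x\wedge y)\wedge(z\wedge x)=y\wedge(z\wedge x)$ for all $x,y,z\in X$. This is the single substitution that drives the whole argument.

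With this in hand, I would simply combine it with (QW). Axiom (QW) asserts that $x\ra\bigl((x\wedge y)\wedge(z\wedge x)\bigr)=(x\ra y)\wedge(x\ra z)$; replacing the inner term $(x\wedge y)\wedge(z\wedge x)$ by $y\wedge(z\wedge x)$ gives $x\ra\bigl(y\wedge(z\wedge x)\bigr)=(x\ra y)\wedge(x\ra z)$, which is precisely ($QW_2$). No further computation with $\ra$ or $^*$ is required.

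There is essentially no obstacle here, since the corollary is a formal consequence of two already-proved results; the only point requiring care is to confirm that Proposition~\ref{qbe-70-20} is applicable, i.e. that ($QW_1$) genuinely holds in the present setting, which Proposition~\ref{qbe-70-10} guarantees. I would therefore present the proof as a two-line deduction: invoke Proposition~\ref{qbe-70-10} to secure ($QW_1$), apply Proposition~\ref{qbe-70-20} to rewrite the bracketed term, and substitute into (QW).
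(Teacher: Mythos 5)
Your proof is correct and matches the paper's intended argument: the corollary is stated without an explicit proof precisely because it follows by combining Proposition~\ref{qbe-70-10} (which secures ($QW_1$)), Proposition~\ref{qbe-70-20} (which rewrites $(x\wedge y)\wedge(z\wedge x)$ as $y\wedge(z\wedge x)$), and axiom (QW), exactly as you describe. No gaps.
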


\begin{theorem} \label{qbe-70-40} Let $X$ be an involutive BE algebra.
Then, condition (QW) is equivalent to conditions ($QW_1$) and ($QW_2$).   
\end{theorem}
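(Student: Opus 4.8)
The plan is to prove the two implications separately, leaning entirely on the results already established in this section, so that almost no fresh computation is needed. For the forward direction, suppose $X$ satisfies (QW). Then Proposition \ref{qbe-70-10} immediately yields ($QW_1$), and Corollary \ref{qbe-70-30} yields ($QW_2$); nothing further is required, since both of those statements were derived precisely under the hypothesis (QW). Thus the only substantive work lies in the converse.

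For the converse, I would assume that $X$ satisfies both ($QW_1$) and ($QW_2$) and aim to recover (QW). The key observation is that Proposition \ref{qbe-70-20} was proved using only ($QW_1$), so it remains available under the present, weaker hypotheses. It supplies the identity $(x\wedge y)\wedge (z\wedge x)=y\wedge (z\wedge x)$ for all $x,y,z\in X$. Applying $x\ra(\,\cdot\,)$ to both sides gives $x\ra ((x\wedge y)\wedge (z\wedge x))=x\ra (y\wedge (z\wedge x))$. It then remains to invoke ($QW_2$), which asserts exactly that $x\ra (y\wedge (z\wedge x))=(x\ra y)\wedge (x\ra z)$. Chaining the two equalities produces $x\ra ((x\wedge y)\wedge (z\wedge x))=(x\ra y)\wedge (x\ra z)$, which is (QW), completing the argument.

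There is essentially no computational obstacle here: the whole converse is a single substitution plus two citations. The one point that genuinely requires care is to confirm that Proposition \ref{qbe-70-20} depends on ($QW_1$) alone and not on the full strength of (QW); inspecting its proof, the sole place its hypothesis enters is the replacement $x\ra (x\wedge y)=x\ra y$, so the reduction is legitimate under ($QW_1$). Hence the real content of the theorem is the repackaging provided by Proposition \ref{qbe-70-20}, and once that is in hand the equivalence is immediate in both directions.
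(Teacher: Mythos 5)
Your proposal is correct and follows exactly the paper's own argument: the forward direction cites Proposition \ref{qbe-70-10} and Corollary \ref{qbe-70-30}, and the converse combines Proposition \ref{qbe-70-20} (which is indeed stated under ($QW_1$) alone) with ($QW_2$). No differences worth noting.
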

\begin{proof}
If $X$ satisfies (QW), then ($QW_1$) and ($QW_2$) follow by Proposition \ref{qbe-70-10} and 
Corollary \ref{qbe-70-30}. 
Conversely, let $X$ be an involutive BE algebra satisfying conditions ($QW_1$) and ($QW_2$).  
By ($QW_1$), using Proposition \ref{qbe-70-20} we get $(x\wedge y)\wedge (z\wedge x)=y\wedge (z\wedge x)$. 
Applying ($QW_2$), we get $x\ra ((x\wedge y)\wedge (z\wedge x))=x\ra (y\wedge (z\wedge x))=(x\ra y)\wedge (x\ra z)$, 
that is (QW). 
\end{proof}

\begin{proposition} \label{qbe-80} Let $X$ be a quantum-Wajsberg algebra. The following hold, for all $x,y\in X$:\\
$(1)$ $x\ra (y\wedge x)=x\ra y;$ \\
$(2)$ $x\le_Q x^*\ra y$ and $x\le_Q y\ra x;$ \\
$(3)$ $x\ra y=0$ iff $x=1$ and $y=0;$ \\
$(4)$ $(x\ra y)^*\wedge x=(x\ra y)^*;$ \\ 
$(5)$ $(x\wedge y)\wedge y=x\wedge y;$ \\
$(6)$ $x\wedge y\le_Q y\le_Q x\vee y;$ \\
$(7)$ $x\vee (y\wedge x)=x;$ \\
$(8)$ $x\wedge (y\vee x)=x;$ \\
$(9)$ $(x\ra y)\ra (y\wedge x)=x;$ \\
$(10)$ $(x\vee y)\ra y=x\ra y;$ \\
$(11)$ $(x\vee y)\vee y=x\vee y;$ \\
$(12)$ $x\le y$ iff $y\wedge x=x$.      
\end{proposition}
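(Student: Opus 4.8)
The plan is to treat the twelve identities not one by one but as a tightly linked chain: I would prove a few \emph{anchor} facts directly from the axiom (QW) (equivalently from ($QW_1$) and ($QW_2$), which are interchangeable by Theorem \ref{qbe-70-40}) and obtain everything else from the involutive duality $(u\wedge v)^*=u^*\vee v^*$, $(u\vee v)^*=u^*\wedge v^*$ (Proposition \ref{qbe-20}(3)) together with Lemma \ref{qbe-10} and Proposition \ref{qbe-30}. First I would clear the cheap parts. Part $(1)$ is ($QW_2$) with $z:=1$, using $1\wedge x=x$ and $x\ra 1=1$ (Proposition \ref{qbe-20}(6), $(BE_2)$). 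For $(3)$, from $(BE_4),(BE_1),(BE_2)$ one gets $y\ra(x\ra y)=x\ra(y\ra y)=1$, so $y\le x\ra y$; if $x\ra y=0$ then $y\le 0$, whence $y^*=1$ and $y=0$ by involution, and then $x^*=x\ra 0=0$ forces $x=1$. Part $(12)$ is immediate both ways: by Lemma \ref{qbe-10}(6), $y\wedge x=((x\ra y)\ra x^*)^*$, which equals $x$ exactly when $x\ra y=1$, while the converse is Proposition \ref{qbe-30}(3).

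Next I would use duality to harvest a whole block from $(1)$ alone. Rewriting $(1)$ through Lemma \ref{qbe-10}(6) turns $u\ra(v\wedge u)=u\ra v$ into $(v^*\vee u^*)\ra u^*=v^*\ra u^*$, i.e. part $(10)$: $(x\vee y)\ra y=x\ra y$. Then $(11)$ follows by unfolding $(x\vee y)\vee y=((x\vee y)\ra y)\ra y=(x\ra y)\ra y=x\vee y$, and $(5)$ is precisely the $*$-dual of $(11)$ (apply $*$ and use $(u\wedge v)^*=u^*\vee v^*$). So $(5),(10),(11)$ cost nothing beyond $(1)$.

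The real work sits in the interlocking cluster $(2),(4),(7),(9)$. I would first reduce it. Parts $(7)$ and $(9)$ \emph{coincide}: by $(1)$, $x\vee(y\wedge x)=(x\ra(y\wedge x))\ra(y\wedge x)=(x\ra y)\ra(y\wedge x)$, and a short Lemma \ref{qbe-10}(3),(6) computation rewrites this common value as $x\vee(x\ra y)^*$; the inequality $x\le(x\ra y)\ra(y\wedge x)$ is free from $(BE_4)$ and $(1)$, so the content is the reverse inequality. Moreover $(8)$ is the $*$-dual of $(7)$; $(6)$ is $(5)$ together with $(8)$ relabelled (since $(8)$ reads $a\le_Q b\vee a$, take $a:=y$, $b:=x$); and $(4)$ follows from $(2)$ and $(9)$ by applying $x\le_Q x^*\ra y$ with $x$ replaced by $(x\ra y)^*$ and $y$ by $y\wedge x$, then invoking $(9)$. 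Thus everything collapses onto the two anchors $(2)$ and the absorption $(7)$.

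The main obstacle is exactly here, and it is structural: $\wedge$ and $\vee$ are \emph{not} commutative and $\le_Q$ is \emph{not} transitive (Remark \ref{qbe-40}). The ``free'' consequences of Proposition \ref{qbe-20}(1) and of part $(12)$ always deliver the \emph{opposite} argument order (one gets $b\wedge a=a$ where the statement demands $a\wedge b=a$), and a naive $\le$-sandwich cannot be closed, because $\le$ need not be antisymmetric in a quantum-Wajsberg algebra (one checks that assuming $x\le y$ and $y\le x$ merely reproduces tautologies under $(QW_1),(QW_2)$). The only available collapse mechanism is the antisymmetry of $\le_Q$ (Proposition \ref{qbe-20}(2)), so the anchors must be proved as genuine equalities rather than via mutual $\le$. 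Concretely, I would establish $(2)$ by unfolding $x\wedge(y\ra x)$ with Lemma \ref{qbe-10}(3),(5),(6) and applying ($QW_2$) in the exact pattern $u\ra(a\wedge(b\wedge u))$ so that it contracts to $x$; and I would establish $(7)$ by feeding $(2)$ (in the form $(x\ra y)^*\le x$, which comes from $x\le_Q x^*\ra y$ and $(BE_4)$) into an ($QW_1$)-reduction of $x\vee(x\ra y)^*$, matching each intermediate term to the precise order required by ($QW_1$)/($QW_2$). Once $(2)$ and $(7)$ are secured, $(4),(6),(8),(9)$ drop out through the reductions above, and the expected difficulty is entirely bookkeeping of argument order, not new ideas.
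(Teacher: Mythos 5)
Your proposal is correct, and its two real anchors coincide with the paper's: part $(1)$ comes from instantiating the axiom at $z:=1$ (the paper does the same with (QW)), part $(2)$ is obtained by the same trick of writing $x$ as $x^*\ra 0$ so that the $0$ collapses the nested meet on the left of the axiom, and your identification of the common value of $(7)$ and $(9)$ as $x\vee(x\ra y)^*$, collapsed to $x$ via $(2)$, is exactly the paper's computation for $(7)$. Where you genuinely differ is in the dependency graph for the remaining parts, and your version is arguably leaner: you obtain $(5)$ as the $*$-dual of $(11)$ (itself from $(10)$, itself from $(1)$), whereas the paper proves $(4)$ first by a direct Lemma \ref{qbe-10} computation and then deduces $(5)$ from $(4)$; you recover $(4)$ afterwards from $(2)$ and $(9)$; you prove $(3)$ from the elementary fact $y\le x\ra y$ instead of the paper's detour through $x^*\le_Q x\ra y$ and Proposition \ref{qbe-20}$(4)$; and you read $y\le_Q x\vee y$ off $(8)$ where the paper reads it off $(2)$. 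I checked that this reordering is non-circular and each substitution is sound. One small imprecision: for the closing step of $(7)$ you say you would feed in $(2)$ ``in the form $(x\ra y)^*\le x$''; the plain relation $\le$ is useless there for exactly the reason you yourself emphasize (no antisymmetry), and what is actually needed is the equality $x^*=x^*\wedge(x\ra y)$ coming from $x^*\le_Q x\ra y$, whose image under $^*$ gives $x=x\vee(x\ra y)^*$. Since you clearly have the $\le_Q$ version available and intend to use it, this is a slip of notation rather than a gap.
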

\begin{proof}
$(1)$: Applying (QW) and Proposition \ref{qbe-20}$(7)$, we have: \\
$\hspace*{1.00cm}$ $x\ra (y\wedge x)=x\ra (x\wedge (y\wedge x))=x\ra ((x\wedge 1)\wedge (y\wedge x))$ \\
$\hspace*{4.00cm}$ $=(x\ra 1)\wedge (x\ra y)=1\wedge (x\ra y)=x\ra y$. \\
$(2)$: By (QW) we have: \\
$\hspace*{2.00cm}$ $x\wedge (x^*\ra y)=x^{**}\wedge (x^*\ra y)=(x^*\ra 0)\wedge (x^*\ra y)$ \\
$\hspace*{4.15cm}$ $=x^*\ra ((x^*\wedge 0)\wedge (y\wedge x^*))$ \\
$\hspace*{4.15cm}$ $=x^*\ra (0\wedge (y\wedge x^*))=x^*\ra 0=x^{**}=x$. \\
Hence $x\le_Q x^*\ra y$. 
It follows that $x\le_Q y^*\ra x$, and replacing $y$ by $y^*$ we get $x\le_Q y\ra x$. \\
$(3)$: If $x\ra y=0$, then by $(2)$, $x^*\le_Q x\ra y=0$, and using Proposition \ref{qbe-20}$(4)$, we get 
$x^*\le 0$, so $x^*=0$, that is $x=1$. It follows that $1\ra y=0$, hence $y=0$. 
Conversely, $x=1$ and $y=0$ implies $x\ra y=0$. \\
$(4)$: Taking into consideration Lemma \ref{qbe-10} and $(1)$, we get: \\
$\hspace*{2.00cm}$ $(x\ra y)^*\wedge x=(((x\ra y)\ra x^*)\ra x^*)^*$ \\
$\hspace*{4.35cm}$ $=(((y^*\ra x^*)\ra x^*)\ra x^*)^*$ \\
$\hspace*{4.35cm}$ $=(x\ra ((y^*\ra x^*)\ra x^*)^*)^*$ \\
$\hspace*{4.35cm}$ $=(x\ra (y\wedge x))^*$ \\
$\hspace*{4.35cm}$ $=(x\ra y)^*$. \\ 
$(5)$: Using $(4)$ we have: \\
$\hspace*{2.00cm}$ $(x\wedge y)\wedge y=((x^*\ra y^*)\ra y^*)^*\wedge y$ \\
$\hspace*{3.85cm}$ $=(y\ra (x^*\ra y^*)^*)^*\wedge y$ \\
$\hspace*{3.85cm}$ $=(y\ra (x^*\ra y^*)^*)^*$ \\
$\hspace*{3.85cm}$ $=((x^*\ra y^*)\ra y^*)^*$ \\
$\hspace*{3.85cm}$ $=x\wedge y$. \\
$(6)$: By $(5)$, $x\wedge y\le_Q y$. Using $(2)$, we get $y\le_Q (x\ra y)\ra y=x\vee y$. \\
$(7)$: By $(2)$ and $(1)$, we have: \\
$\hspace*{2.00cm}$ $x=x^{**}=(x^*)^*=(x^*\wedge (x\ra y))^*=x\vee (x\ra y)^*$ \\
$\hspace*{2.30cm}$ $=(x\ra (x\ra y)^*)\ra (x\ra y)^*$ \\
$\hspace*{2.30cm}$ $=(x\ra y)\ra (x\ra (x\ra y)^*)^*$ \\
$\hspace*{2.30cm}$ $=(x\ra y)\ra ((x\ra y)\ra x^*)^*$ \\
$\hspace*{2.30cm}$ $=(x\ra y)\ra ((y^*\ra x^*)\ra x^*)^*$ \\
$\hspace*{2.30cm}$ $=(x\ra y)\ra (y\wedge x)$ \\
$\hspace*{2.30cm}$ $=(x\ra (y\wedge x))\ra (y\wedge x)$ \\
$\hspace*{2.30cm}$ $=x\vee (y\wedge x)$. \\
$(8)$: Applying $(7)$, we have: \\
$\hspace*{1.00cm}$ $x\wedge (y\vee x)=(x^*\vee (y\vee x)^*)^*=(x^*\vee (y^*\wedge x^*))^*=(x^*)^*=x$. \\
$(9)$: Using $(1)$ and $(7)$, we get: \\
$\hspace*{1.00cm}$ $(x\ra y)\ra (y\wedge x)=(x\ra (y\wedge x))\ra (y\wedge x)=x\vee (y\wedge x)=x$. \\
$(10)$: Applying $(1)$ we get: \\
$\hspace*{2.00cm}$ $(x\vee y)\ra y=y^*\ra (x\vee y)^*=y^*\ra (x^*\wedge y^*)$ \\
$\hspace*{4.00cm}$ $=y^*\ra x^*=x\ra y$. \\
$(11)$: Using $(10)$ we have: \\
$\hspace*{1.00cm}$  $(x\vee y)\vee y=((x\vee y)\ra y)\ra y=(x\ra y)\ra y=x\vee y$. \\
$(12)$: If $x\le y$, then $x\ra y=1$ and we have 
$y\wedge x=((y^*\ra x^*)\ra x^*)^*=((x\ra y)\ra x^*)^*=(1\ra x^*)^*=(x^*)^*=x$. 
Conversely, if $y\wedge x=x$, using $(1)$ we get 
$x\ra y=x\ra (y\wedge x)=x\ra x=1$, hence $x\le y$. 
\end{proof}

\begin{proposition} \label{qbe-90} Let $X$ be a quantum-Wajsberg algebra. 
If $x,y\in X$ such that $x\le_Q y$, then, the following hold, for any $z\in X$:\\
$(1)$ $y=y\vee x;$ \\
$(2)$ $y^*\le_Q x^*;$ \\
$(3)$ $y\ra z\le_Q x\ra z;$ \\
$(4)$ $z\ra x\le_Q z\ra y;$ \\
$(5)$ $x\wedge z\le_Q y\wedge z;$ \\
$(6)$ $x\vee z\le_Q y\vee z$. 
\end{proposition}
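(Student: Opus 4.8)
The plan is to treat the six assertions not as independent facts but as a cascade, reducing the later ones to the earlier ones and to the already-established monotonicity and dualization results, so that no fresh computation with (QW) is required. I would handle (1) and (2) together by dualizing the hypothesis: since $x\le_Q y$ means $x=x\wedge y$, applying $^*$ and using $(x\wedge y)^*=x^*\vee y^*$ together with involutivity gives $x^*=x^*\vee y^*$. Now Proposition \ref{qbe-80}(6), read for the pair $x^*,y^*$, yields $y^*\le_Q x^*\vee y^*=x^*$, which is exactly (2). Applying $^*$ to the resulting identity $y^*=y^*\wedge x^*$ and using $(y^*\wedge x^*)^*=y\vee x$ then gives $y=y\vee x$, which is (1). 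So (1) and (2) fall out immediately, the only subtlety being the bookkeeping of the De Morgan identities between $\wedge$ and $\vee$.

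For (3) I would rewrite $x\ra z$ using $x=x\wedge y$: by Proposition \ref{qbe-30}(1), $x\ra z=(x\wedge y)\ra z=(y\ra x)\ra(y\ra z)$. Then Proposition \ref{qbe-80}(2), in the form $a\le_Q b\ra a$, applied with $a=y\ra z$ and $b=y\ra x$, gives $y\ra z\le_Q(y\ra x)\ra(y\ra z)=x\ra z$, which is (3). Assertion (6) is then just (3) used twice: from $x\le_Q y$ we get $y\ra z\le_Q x\ra z$ by (3), and feeding this new inequality back into (3) (with $z$ in the role of the fixed element) gives $(x\ra z)\ra z\le_Q(y\ra z)\ra z$, i.e.\ $x\vee z\le_Q y\vee z$.

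The remaining two, (4) and (5), I would obtain by combining (2), (3), (6) with the identity $z\ra x=x^*\ra z^*$ of Lemma \ref{qbe-10}(6). For (4): since $y^*\le_Q x^*$ by (2), assertion (3) applied to the pair $y^*,x^*$ with $z^*$ in place of the fixed element gives $x^*\ra z^*\le_Q y^*\ra z^*$, and rewriting both sides via Lemma \ref{qbe-10}(6) turns this into $z\ra x\le_Q z\ra y$. For (5): again $y^*\le_Q x^*$ by (2), so (6) applied to $y^*,x^*$ with $z^*$ gives $y^*\vee z^*\le_Q x^*\vee z^*$; since $y^*\vee z^*=(y\wedge z)^*$ and $x^*\vee z^*=(x\wedge z)^*$, this reads $(y\wedge z)^*\le_Q(x\wedge z)^*$, and one more dualization through (2) returns $x\wedge z\le_Q y\wedge z$.

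The one place that needs genuine care—the main obstacle—is recognizing at the outset that $x\le_Q y$ is equivalent to the dual statement $x^*=x^*\vee y^*$, so that the monotonicity statements can be transported back and forth under $^*$. Once that dictionary is fixed, every item except (3) is a formal consequence of (3), (6), and Proposition \ref{qbe-80}(2),(6); the only real new inputs are the single application of Proposition \ref{qbe-30}(1) and of Proposition \ref{qbe-80}(2) inside the proof of (3). I would therefore present the items in the dependency order (1),(2),(3),(6),(4),(5), rather than the numbered order, to make each reduction visible.
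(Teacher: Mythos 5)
Your proof is correct; every step checks out against the previously established results (Proposition \ref{qbe-20}(3), Proposition \ref{qbe-30}(1), Lemma \ref{qbe-10}(6), and Proposition \ref{qbe-80}(2),(6)), and there is no circularity in your dependency order $(2),(1),(3),(6),(4),(5)$. The overall strategy is the same as the paper's --- everything is reduced to the monotonicity facts of Proposition \ref{qbe-80} plus dualization through $^*$ --- but you diverge in two places. First, you prove $(2)$ before $(1)$, getting $y^*\le_Q x^*$ from $y^*\le_Q x^*\vee y^*=x^*$ via Proposition \ref{qbe-80}(6), whereas the paper proves $(1)$ first from the absorption law $y\vee(x\wedge y)=y$ of Proposition \ref{qbe-80}(7) and then dualizes to get $(2)$; the two routes are equally short. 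Second, and more substantively, you obtain $(4)$ as a purely formal consequence of $(2)$, $(3)$ and the contraposition identity $x^*\ra z^*=z\ra x$, while the paper runs a separate direct computation for $(4)$ (showing $(z\ra x)\wedge(z\ra y)=z\ra x$ using $y=y\vee x$ and Proposition \ref{qbe-80}(2)). Your reduction is cleaner and makes the logical dependencies more transparent; the paper's version has the minor advantage of not requiring $(3)$ to be invoked for the auxiliary pair $(y^*,x^*)$. Your proof of $(3)$ is essentially the paper's computation compressed: where the paper verifies $(y\ra z)\wedge(x\ra z)=y\ra z$ by rewriting $(x\wedge y)\ra z$ as $(y\ra z)^*\ra(y\ra x)^*$ and using $a\le_Q a^*\ra b$, you use the other half of Proposition \ref{qbe-80}(2), namely $a\le_Q b\ra a$, directly on $(y\ra x)\ra(y\ra z)$. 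Items $(5)$ and $(6)$ match the paper's proofs exactly up to packaging.
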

\begin{proof}
$(1)$: From $x\le_Q y$ we have $x=x\wedge y$ and by Proposition \ref{qbe-80}$(7)$, 
      $y=y\vee (x\wedge y)=y\vee x$. \\
$(2)$: Using $(1)$, we get $y=y\vee x$, so that $y^*=(y\vee x)^*=y^*\wedge x^*$. 
Hence $y^*\le_Q x^*$. \\
$(3)$: $x\le_Q y$ implies $x=x\wedge y$. Since by Proposition \ref{qbe-80}$(2)$, 
$y\ra z\le_Q (y\ra z)^*\ra (y\ra x)^*$,  we have: \\
$\hspace*{2.00cm}$ $(y\ra z)\wedge (x\ra z)=(y\ra z)\wedge ((x\wedge y)\ra z)$ \\
$\hspace*{5.15cm}$ $=(y\ra z)\wedge(((x^*\ra y^*)\ra y^*)^*\ra z)$ \\
$\hspace*{5.15cm}$ $=(y\ra z)\wedge (z^*\ra ((x^*\ra y^*)\ra y^*))$ \\
$\hspace*{5.15cm}$ $=(y\ra z)\wedge ((x^*\ra y^*)\ra (z^*\ra y^*))$ \\
$\hspace*{5.15cm}$ $=(y\ra z)\wedge ((y\ra x)\ra (y\ra z))$ \\
$\hspace*{5.15cm}$ $=(y\ra z)\wedge ((y\ra z)^*\ra (y\ra x)^*)$ \\
$\hspace*{5.15cm}$ $=y\ra z$. \\
It follows that $y\ra z\le_Q x\ra z$. \\
$(4)$: By $(1)$, $x\le_Q y$ implies $y=y\vee x$. Since by Proposition \ref{qbe-80}$(2)$, 
$z\ra x\le_Q (z\ra x)^*\ra (y\ra x)^*$, we have: \\
$\hspace*{2.00cm}$ $(z\ra x)\wedge (z\ra y)=(z\ra x)\wedge (z\ra (y\vee x))$ \\
$\hspace*{5.15cm}$ $=(z\ra x)\wedge (z\ra ((y\ra x)\ra x))$ \\
$\hspace*{5.15cm}$ $=(z\ra x)\wedge ((y\ra x)\ra (z\ra x))$ \\
$\hspace*{5.15cm}$ $=(z\ra x)\wedge ((z\ra x)^*\ra (y\ra x)^*)$ \\
$\hspace*{5.15cm}$ $=z\ra x$. \\
Thus $z\ra x\le_Q z\ra y$. \\
$(5)$: From $x\le_Q y$ we have $y^*\le_Q x^*$. By $(3)$ we get $x^*\ra z^*\le_Q y^*\ra z^*$ and 
$(y^*\ra z^*)\ra z^*\le_Q (x^*\ra z^*)\ra z^*$. 
It follows that $((x^*\ra z^*)\ra z^*)^*\le_Q ((y^*\ra z^*)\ra z^*)^*$, 
that is $x\wedge z\le_Q y\wedge z$. \\
$(6)$: By $(3)$, we have $y\ra z\le_Q x\ra z$ and $(x\ra z)\ra z\le_Q (y\ra z)\ra z$. 
Hence $x\vee z\le_Q y\vee z$.  
\end{proof}

\begin{proposition} \label{qbe-100} Let $X$ be a quantum-Wajsberg algebra. Then, the following hold, for all 
$x,y,z\in X$:\\
$(1)$ $(x\wedge y)\wedge (y\wedge z)=(x\wedge y)\wedge z;$ \\
$(2)$ $\le_Q$ is transitive; \\
$(3)$ $x\vee y\le_Q x^*\ra y;$ \\
$(4)$ $(x^*\ra y)^*\ra (x\ra y^*)^*=x^*\ra y;$ \\
$(5)$ $(x\ra y)^*\ra (y\ra x)^*=x\ra y;$ \\
$(6)$ $(y\ra x)\ra (x\ra y)=x\ra y;$ \\
$(7)$ $(x\ra y)\vee (y\ra x)=1;$ \\
$(8)$ $(z\wedge x)\ra (y\wedge x)=(z\wedge x)\ra y$.    
\end{proposition}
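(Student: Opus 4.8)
The plan is to reduce the eight items to two genuinely new facts—item $(1)$ (equivalently, transitivity of $\le_Q$, which is item $(2)$) and the prelinearity $(x\ra y)\vee(y\ra x)=1$ of item $(7)$—and to derive everything else from these using only the identities already established. I would first dispose of the routine reductions and then isolate the two hard lemmas.

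First the easy items. For $(8)$ I would apply Proposition \ref{qbe-30}$(1)$ twice: $(z\wedge x)\ra(y\wedge x)=(x\ra z)\ra(x\ra(y\wedge x))$, then rewrite $x\ra(y\wedge x)=x\ra y$ by Proposition \ref{qbe-80}$(1)$ and fold back to $(x\ra z)\ra(x\ra y)=(z\wedge x)\ra y$. For $(2)$, assuming $(1)$: if $x\le_Q y$ and $y\le_Q z$ then $x=x\wedge y=x\wedge(y\wedge z)=(x\wedge y)\wedge(y\wedge z)=(x\wedge y)\wedge z=x\wedge z$, the middle step being $(1)$. Items $(4)$, $(5)$, $(7)$ all reduce to $(6)$: by Lemma \ref{qbe-10}$(6)$ one has $(x\ra y)^*\ra(y\ra x)^*=(y\ra x)\ra(x\ra y)$, so $(5)$ is $(6)$ verbatim; replacing $x$ by $x^*$ in $(6)$ and using $x\ra y^*=y\ra x^*$ and Lemma \ref{qbe-10}$(3)$,$(6)$ gives $(4)$; and $(x\ra y)\vee(y\ra x)=((x\ra y)\ra(y\ra x))\ra(y\ra x)=(y\ra x)\ra(y\ra x)=1$ by $(6)$ (with $x,y$ swapped) and $(BE_1)$, which is $(7)$.

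For $(3)$ I would first prove a \emph{join lemma}: if $a\le_Q w$ and $b\le_Q w$ then $a\vee b\le_Q w$. Indeed $a\le_Q w$ gives $a\vee b\le_Q w\vee b$ by Proposition \ref{qbe-90}$(6)$, while $b\le_Q w$ gives $w\vee b=w$ by Proposition \ref{qbe-90}$(1)$, hence $a\vee b\le_Q w$. Applying this with $w=x^*\ra y$, together with $x\le_Q x^*\ra y$ and $y\le_Q x^*\ra y$ (both instances of Proposition \ref{qbe-80}$(2)$), yields $x\vee y\le_Q x^*\ra y$. Now the two crux lemmas. For $(6)$, Proposition \ref{qbe-80}$(2)$ gives at once $x\ra y\le_Q(y\ra x)\ra(x\ra y)$, i.e. $S=S\wedge T$ with $S=x\ra y$ and $T=(y\ra x)\ra(x\ra y)$; the content is the reverse. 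Here I would use prelinearity: since $S^*\ra T^*=T\ra S=(y\ra x)\vee(x\ra y)$ (by Lemma \ref{qbe-10}$(6)$ and the definition of $\vee$), the identity $(y\ra x)\vee(x\ra y)=1$ forces $S\wedge T=(1\ra T^*)^*=T$, and combined with $S=S\wedge T$ this gives $S=T$. Thus $(4)$–$(7)$ all rest on proving $(x\ra y)\vee(y\ra x)=1$ directly. For $(1)$, applying involution and $u\wedge v=(u^*\vee v^*)^*$ reduces the claim to the $\vee$-associativity $(x^*\vee y^*)\vee(y^*\vee z^*)=(x^*\vee y^*)\vee z^*$; writing $s=x^*\vee y^*$ this is $s\vee(y^*\vee z^*)=s\vee z^*$ under $y^*\le_Q s$ (Proposition \ref{qbe-80}$(6)$).

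The hard part will be these last two equalities, and the difficulty is structural. The transitivity of $\le_Q$ that $(1)$ delivers already \emph{fails} for general involutive BE algebras (Remark \ref{qbe-40}), and since the derivation of $(2)$ from $(1)$ above uses nothing beyond the definition of $\le_Q$, item $(1)$ itself cannot hold without $(QW)$. Likewise, a direct expansion of either prelinearity or the $\vee$-associativity using only $(BE_4)$, Proposition \ref{qbe-30} and Proposition \ref{qbe-90} closes up into a tautology: for instance $s\vee(y^*\vee z^*)$ rewrites, via $d=y^*\ra z^*$ and $e=s\ra z^*$ with $e\le_Q d$, back to $(d\ra e)\ra(d\ra z^*)$ and never terminates. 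Consequently $(QW)$ must be injected substantively, and I would prove each equality by a direct computation in the style of Proposition \ref{qbe-70-20}, invoking $(QW_1)$ (respectively $(QW_2)$) precisely at the step where the hypothesis $y^*\le_Q s$ (respectively the comparison between $x\ra y$ and $y\ra x$) is consumed, and closing with the antisymmetry of $\le_Q$ from Proposition \ref{qbe-20}$(2)$. I expect the prelinearity $(x\ra y)\vee(y\ra x)=1$ to be the single most delicate point, since it is exactly where the non-lattice order $\le_Q$ must be forced to behave like a genuine order at the top element $1$.
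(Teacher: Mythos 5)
Your reductions of the easy items are sound, and in places cleaner than the paper's: item $(8)$ via two applications of Proposition \ref{qbe-30}$(1)$ around the rewrite $x\ra (y\wedge x)=x\ra y$; item $(3)$ via the join lemma assembled from Proposition \ref{qbe-90}$(1)$,$(6)$ (the paper gets it in one line from $x^*\le_Q x\ra y$ and Proposition \ref{qbe-90}$(3)$, but yours is equally valid); the observation that $(5)$ and $(6)$ are literally the same identity under Lemma \ref{qbe-10}$(6)$; and $(2)$ from $(1)$ exactly as in the paper. The problem is that the proposal stops short of proving the two statements you correctly identify as carrying all the content, namely item $(1)$ and the prelinearity of item $(7)$, and these are precisely where $(QW)$ does its real work.

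For the $(4)$--$(7)$ cluster you reverse the paper's order. The paper proves $(4)$ first by a self-contained chain of rewritings using $(QW_1)$, Lemma \ref{qbe-10}$(5)$,$(6)$, $x^*\le_Q x\ra y$ (Proposition \ref{qbe-80}$(2)$) and $x=x\vee (x\ra y^*)^*$ (Proposition \ref{qbe-90}$(1)$), and then $(5)$, $(6)$, $(7)$ fall out in three lines. You instead want to prove $(7)$ directly and descend to $(6)$, and thence to $(4)$ and $(5)$; since your derivation of $(6)$ from ``$S=S\wedge T$ together with $T\ra S=1$'' genuinely needs $(7)$ as input, your whole cluster hinges on a direct proof of $(x\ra y)\vee (y\ra x)=1$, which you do not supply beyond the intention to compute ``in the style of Proposition \ref{qbe-70-20}.'' No such computation is exhibited, and it is not evident that one exists that is any easier than the paper's proof of $(4)$ --- the paper evidently found $(4)$ to be the accessible entry point, not $(7)$. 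The same applies to item $(1)$: your reduction to $s\vee (y^*\vee z^*)=s\vee z^*$ under $y^*\le_Q s$ is a correct reformulation, and your diagnosis that a naive expansion closes into a tautology is right, but the actual argument is missing; the paper's proof uses Proposition \ref{qbe-30}$(1)$ to recast the relevant subterm as $(((x\wedge y)^*\ra z^*)\wedge (y^*\ra z^*))\ra z^*$ and then collapses the inner meet using $x\wedge y\le_Q y$ (Proposition \ref{qbe-80}$(6)$) and the monotonicity of Proposition \ref{qbe-90}$(3)$. As it stands, the proposal is a correct organizational plan with the two essential proofs left unproved.
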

\begin{proof}
$(1)$: Applying Propositions \ref{qbe-30}$(1)$ and \ref{qbe-80}$(6)$, we get: \\
$\hspace*{0.35cm}$ $(x\wedge y)\wedge (y\wedge z)=(((x\wedge y)^*\ra (y\wedge z)^*)\ra (y\wedge z)^*)^*$ \\
$\hspace*{3.00cm}$ $=(((x\wedge y)^*\ra (y^*\vee z^*))\ra (y^*\vee z^*))^*$ \\
$\hspace*{3.00cm}$ $=(((x\wedge y)^*\ra ((y^*\ra z^*)\ra z^*))\ra ((y^*\ra z^*)\ra z^*))^*$ \\
$\hspace*{3.00cm}$ $=(((y^*\ra z^*)\ra ((x\wedge y)^*\ra z^*))\ra ((y^*\ra z^*)\ra z^*))^*$ \\
$\hspace*{3.00cm}$ $=((((x\wedge y)^*\ra z^*)\wedge (y^*\ra z^*))\ra z^*)^*$ \\
$\hspace*{3.00cm}$ $=(((x\wedge y)^*\ra z^*)\ra z^*)^*$ \\
$\hspace*{3.00cm}$ $=(x\wedge y)\wedge z$. \\
(Denoting by $a=(x\wedge y)\ra z^*$, $b=y^*\ra z^*$, $c=z^*$, by Proposition \ref{qbe-30}$(1)$ we have 
$(b\ra a)\ra (b\ra c)=(a\wedge b)\ra c$. It follows that: \\
$((y^*\ra z^*)\ra ((x\wedge y)^*\ra z^*))\ra ((y^*\ra z^*)\ra z^*)=(((x\wedge y)^*\ra z^*)\wedge (y^*\ra z^*))\ra z^*$. \\
By Proposition \ref{qbe-80}$(6)$, $x\wedge y\le_Q y$, so that $y^*\le_Q (x\wedge y)^*$. 
Hence, by Proposition \ref{qbe-90}$(3)$, $(x\wedge y)^*\ra z^*\le_Q y^*\ra z^*$, that is 
$((x\wedge y)^*\ra z^*)\wedge (y^*\ra z^*)=(x\wedge y)^*\ra z^*$). \\
$(2)$: If $x\le_Q y$ and $y\le_Q z$, then $x=x\wedge y$ and $y=y\wedge z$. 
Applying $(1)$, we get $x=(x\wedge y)\wedge (y\wedge z)=(x\wedge y)\wedge z=x\wedge z$, that is $x\le_Q z$. 
We conclude that $\le_Q$ is transitive. \\
$(3)$: Since $x^*\le_Q x\ra y$, we have $(x\ra y)\ra y\le_Q x^*\ra y$, that is $x\vee y\le_Q x^*\ra y$. \\
$(4)$: By $(QW_1)$ we have $y^*\ra x=y^*\ra (y^*\wedge x)$. 
Using Lemma \ref{qbe-10}$(5)$,$(6)$ and Propositions \ref{qbe-80}$(2)$, \ref{qbe-90}$(1)$, we get: \\
$\hspace*{0.70cm}$ $(x^*\ra y)^*\ra (x\ra y^*)^*=(y^*\ra x)^*\ra (x\ra y^*)^*$ \\
$\hspace*{4.50cm}$ $=(y^*\ra (y^*\wedge x))^*\ra (x\ra y^*)^*$ \\
$\hspace*{4.50cm}$ $=(x\ra y^*)\ra (y^*\ra (y^*\wedge x))$ \\
$\hspace*{4.50cm}$ $=y^*\ra ((x\ra y^*)\ra (y^*\wedge x))$ \\
$\hspace*{4.50cm}$ $=y^*\ra ((y^*\wedge x)^*\ra (x\ra y^*)^*)$ \\
$\hspace*{4.50cm}$ $=y^*\ra ((y\vee x^*)\ra (x\ra y^*)^*)$ \\
$\hspace*{4.50cm}$ $=y^*\ra (((y\ra x^*)\ra x^*)\ra (x\ra y^*)^*)$ \\
$\hspace*{4.50cm}$ $=y^*\ra (((x\ra y^*)\ra x^*)\ra (x\ra y^*)^*)$ \\
$\hspace*{4.50cm}$ $=y^*\ra ((x\ra (x\ra y^*)^*)\ra (x\ra y^*)^*)$ \\
$\hspace*{4.50cm}$ $=y^*\ra (x\vee (x\ra y^*)^*)$ \\
$\hspace*{4.50cm}$ $=y^*\ra x=x^*\ra y$. \\
(By Proposition \ref{qbe-80}$(2)$, $x^*\le_Q x\ra y^*$, so $(x\ra y^*)^*\le_Q x$. 
Using Proposition \ref{qbe-90}$(1)$, we get $x=x\vee (x\ra y^*)^*$). \\
$(5)$: It follows from $(4)$ replacing $x$ by $x^*$ and using Lemma \ref{qbe-10}$(5)$,$(6)$. \\
$(6)$: It follows from $(5)$ and Lemma \ref{qbe-10}$(6)$. \\ 
$(7)$: Applying $(6)$, we have: \\
$\hspace*{2.00cm}$ $(x\ra y)\vee (y\ra x)=((x\ra y)\ra (y\ra x))\ra (y\ra x)$ \\
$\hspace*{5.15cm}$ $=(y\ra x)\ra (y\ra x)=1$. \\
$(8)$: Applying Lemma \ref{qbe-10}, $(BE_4)$ and Proposition \ref{qbe-80}$(1)$, we get: \\
$\hspace*{2.00cm}$ $(z\wedge x)\ra (y\wedge x)=((z^*\ra x^*)\ra x^*)^*\ra (y\wedge x)$ \\
$\hspace*{5.00cm}$ $=(y\wedge x)^*\ra ((z^*\ra x^*)\ra x^*)$ \\
$\hspace*{5.00cm}$ $=(z^*\ra x^*)\ra ((y\wedge x)^*\ra x^*)$ \\
$\hspace*{5.00cm}$ $=(z^*\ra x^*)\ra (x\ra (y\wedge x))$ \\ 
$\hspace*{5.00cm}$ $=(z^*\ra x^*)\ra (x\ra y)$ \\
$\hspace*{5.00cm}$ $=(z^*\ra x^*)\ra (y^*\ra x^*)$ \\
$\hspace*{5.00cm}$ $=y^*\ra ((z^*\ra x^*)\ra x^*)$ \\
$\hspace*{5.00cm}$ $=((z^*\ra x^*)\ra x^*)^*\ra y$ \\
$\hspace*{5.00cm}$ $=(z\wedge x)\ra y$. 
\end{proof}

\begin{corollary} \label{qbe-100-10} If $X$ is a quantum-Wajsberg algebra, then $\le_Q$ is a partial order on $X$. 
\end{corollary}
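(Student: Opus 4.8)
The plan is to observe that a partial order is exactly a reflexive, antisymmetric, and transitive relation, and that all three of these properties have already been verified in the excerpt. Since the quantum-Wajsberg algebra $X$ is in particular an involutive BE algebra, every result proved for involutive BE algebras applies to it directly.

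First I would invoke Proposition \ref{qbe-20}$(2)$, which establishes that $\le_Q$ is reflexive and antisymmetric in any involutive BE algebra; the reflexivity there comes from the computation $x\wedge x=x$, and the antisymmetry from Proposition \ref{qbe-20}$(1)$. Because a quantum-Wajsberg algebra is by Definition \ref{qbe-70} an involutive BE algebra satisfying the extra axiom (QW), these two properties hold without any further work.

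Next I would invoke Proposition \ref{qbe-100}$(2)$, which supplies transitivity of $\le_Q$. This is the only property that genuinely requires the quantum-Wajsberg structure rather than merely the involutive BE structure — indeed, Remark \ref{qbe-40} exhibits an involutive BE algebra in which $\le_Q$ fails to be transitive, so the axiom (QW) is essential here. The transitivity argument itself rests on the associativity-type identity $(x\wedge y)\wedge(y\wedge z)=(x\wedge y)\wedge z$ from Proposition \ref{qbe-100}$(1)$.

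Combining these, $\le_Q$ is reflexive, antisymmetric, and transitive, hence a partial order on $X$. There is no real obstacle: the work has been front-loaded into Propositions \ref{qbe-20} and \ref{qbe-100}, and the corollary is simply their conjunction.

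\begin{proof}
By Proposition \ref{qbe-20}$(2)$, the relation $\le_Q$ is reflexive and antisymmetric in any involutive BE algebra, hence in particular in the quantum-Wajsberg algebra $X$. By Proposition \ref{qbe-100}$(2)$, $\le_Q$ is transitive. Therefore $\le_Q$ is a partial order on $X$.
\end{proof}
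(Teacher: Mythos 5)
Your proof is correct and matches the paper's own argument exactly: the paper also cites Proposition \ref{qbe-20}$(2)$ for reflexivity and antisymmetry and Proposition \ref{qbe-100}$(2)$ for transitivity. Your additional remark that transitivity is the only part requiring the (QW) axiom, as witnessed by Remark \ref{qbe-40}, is a correct and useful observation.
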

\begin{proof}
It follows by Propositions \ref{qbe-20}$(2)$ and \ref{qbe-100}$(2)$. 
\end{proof}

\begin{example} \label{qbe-110}
The involutive BE algebra from Example \ref{qbe-40} is not a quantum-Wajsberg algebra. 
Indeed, $2\ra ((2\wedge 0)\wedge (6\wedge 0))=2\ne 3=(2\ra 0)\wedge (2\ra 6)$, thus axiom (QW) is not satisfied. 
\end{example}

\begin{remark} \label{qbe-110-10}
As we can see in the next example, in general the quantum-Wajsberg algebras are not (commutative) quantum-B algebras. 
We will prove in the next section that the $\vee$-commutative quantum-Wajsberg algebras are commutative quantum-B algebras. 
\end{remark}

\begin{example} \label{qbe-120} 
Let $X=\{0,a,b,c,1\}$ and let $(X,\ra,^*,1)$ be the involutive BE algebra with $\ra$ and the corresponding 
operation $\wedge$ given in the following tables:  
\[
\begin{array}{c|ccccccc}
\ra & 0 & a & b & c & 1 \\ \hline
0   & 1 & 1 & 1 & 1 & 1 \\ 
a   & b & 1 & a & 1 & 1 \\ 
b   & a & 1 & 1 & 1 & 1 \\ 
c   & c & 1 & 1 & 1 & 1 \\
1   & 0 & a & b & c & 1  
\end{array}
\hspace{10mm}
\begin{array}{c|ccccccc}
\wedge & 0 & a & b & c & 1 \\ \hline
0    & 0 & 0 & 0 & 0 & 0 \\ 
a    & 0 & a & b & c & a \\ 
b    & 0 & b & b & c & b \\ 
c    & 0 & a & b & c & c \\
1    & 0 & a & b & c & 1 
\end{array}
\]
where $x^*=x\ra 0$. Then, $X$ is a quantum-Wajsberg algebra.  
We can see that $c\le b$, but $a\ra c=1\nleq a=a\ra b$, hence axiom $(QB_2)$ is not satisfied.  
It follows that $X$ is not a commutative quantum-B algebra. 
\end{example}

$\vspace*{1mm}$

\section{Connections with Wajsberg algebras}

Since the $\vee$-commutativity plays an important role in the study of quantum-Wajsberg algebras, we define the $\vee$-commutative quantum-Wajsberg algebras and we investigate their properties. 
We give a condition for a quantum-Wajsberg algebra to be a Wajsberg algebra, and we prove  
that a Wajsberg algebra is both a quantum-Wajsberg algebra and a commutative quantum-B algebra. \\
The $\vee$-commutative BE algebras have been defined and studied in \cite{Walend1}.

\begin{definition} \label{cqbe-10} 
A BE algebra $X$ is called \emph{$\vee$-commutative} if it satisfies axiom $($$\vee$-$comm)$. 
\end{definition}

\begin{remark} \label{qbe-130}
It is known that: \\
$\hspace*{0.5cm}$ $-$ $\vee$-commutative BE algebras are $\vee$-commutative BCK algebras (\cite{Walend1}]),  \\
$\hspace*{0.5cm}$ $-$ bounded $\vee$-commutative BCK are term-equivalent to MV algebras (\cite{Mund1}) and \\
$\hspace*{0.5cm}$ $-$ Wajsberg algebras are term-equivalent to MV algebras (\cite{Font1}). \\
It follows that the bounded $\vee$-commutative BE algebras are bounded $\vee$-commutative BCK algebras, hence are term-equivalent with MV algebras, hence with Wajsberg algebras. 
Hence the $\vee$-commutative quantum-Wajsberg algebras are the Wajsberg algebras. 
\end{remark} 

By Proposition \ref{qbe-30}, a self distributive involutive BE algebra is $\vee$-commutative, so it is a 
Wajsberg algebra. 

\begin{proposition} \label{cqbe-20} 
If $(X,\ra,^*,1)$ is a Wajsberg algebra, then the following hold, for all $x,y\in X$: \\
$(1)$ $X$ is involutive; \\
$(2)$ $x\wedge y=y\wedge x;$  \\
$(3)$ $x\le y$ implies $x\le_Q y;$ \\
$(4)$ $\le$ is transitive. 
\end{proposition}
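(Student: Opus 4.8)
The plan is to first record that a Wajsberg algebra is a bounded BE algebra: axioms $(W_5),(W_6),(W_1),(W_7)$ are exactly $(BE_1)$--$(BE_4)$, and boundedness with $0=1^*$ is given, so Lemma \ref{qbe-10} is available throughout. The single genuinely new ingredient I would extract first is the \emph{antisymmetry} of $\le$: if $x\ra y=1$ and $y\ra x=1$, then $(W_3)$ gives $y=1\ra y=(x\ra y)\ra y=(y\ra x)\ra x=1\ra x=x$. Since $(W_2)$ is, after relabeling $y,z,x$ to $x,y,z$, precisely $(BCK_1)$, this shows $X$ is a BCK algebra, whence $\le$ is a partial order. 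This already delivers part $(4)$ (transitivity) and supplies the antisymmetry needed below.

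For part $(1)$ I would prove the two inequalities separately and combine them by antisymmetry. The inequality $x\le x^{**}$ follows from $(W_7)$ and $(W_5)$: writing $x^{**}=x^*\ra 0$ and using $x\ra 0=x^*$ (a consequence of Lemma \ref{qbe-10}(3) with $y:=1$), we get $x\ra x^{**}=x\ra(x^*\ra 0)=x^*\ra(x\ra 0)=x^*\ra x^*=1$. For the reverse $x^{**}\le x$, I would instantiate $(W_4)$ at $y:=1$: since $1^*=0$ we have $x^*\ra 1^*=x^{**}$ and $1\ra x=x$ by $(W_1)$, so $(x^*\ra 1^*)\ra(1\ra x)=x^{**}\ra x=1$. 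Antisymmetry then yields $x=x^{**}$.

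For part $(2)$ I would use that $(W_3)$ is exactly commutativity of $\vee$. Applying it to $x^*$ and $y^*$ gives $(x^*\ra y^*)\ra y^*=(y^*\ra x^*)\ra x^*$, i.e. $x^*\vee y^*=y^*\vee x^*$; taking complements and using the definition $x\wedge y=(x^*\vee y^*)^*$ yields $x\wedge y=y\wedge x$ (notably, this step needs no appeal to $(1)$). For part $(3)$, assume $x\le y$, i.e. $x\ra y=1$. Using $(2)$ I rewrite $x\wedge y=y\wedge x=((y^*\ra x^*)\ra x^*)^*$, and Lemma \ref{qbe-10}(6) gives $y^*\ra x^*=x\ra y=1$; hence $x\wedge y=(1\ra x^*)^*=x^{**}=x$ by $(1)$, that is $x\le_Q y$.

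The main obstacle is purely the bookkeeping in $(1)$: involutivity must be obtained \emph{without} assuming it, which is why I derive antisymmetry directly from $(W_3)$ and then the two one-sided inequalities from the raw axioms $(W_4),(W_5),(W_7)$ rather than from any identity presupposing $x^{**}=x$. Once $(1)$ is established, parts $(2)$--$(4)$ are short manipulations with $(BE_4)=(W_7)$ and the BCK partial-order structure; the only point requiring care is that Lemma \ref{qbe-10}(6), being a statement about involutive BE algebras, may be invoked in $(3)$ only after $(1)$ has been proved.
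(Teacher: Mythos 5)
Your treatment of parts $(2)$--$(4)$ is sound. For $(4)$ you identify $X$ as a BCK algebra (relabelling $(W_2)$ into $(BCK_1)$ and extracting antisymmetry from $(W_3)$ and $(W_1)$) and then quote that $\le$ is a partial order on any BCK algebra; the paper instead verifies transitivity by a direct computation with $(W_7)$ and $(W_3)$, but both routes are legitimate. Parts $(2)$ and $(3)$ essentially coincide with the paper's argument, and you are right to note that Lemma \ref{qbe-10}$(6)$ may only be invoked in $(3)$ once $(1)$ is available.

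The gap is in part $(1)$. Both halves of your argument rest on the identity $x^*=x\ra 0$ (with $0=1^*$) for the \emph{primitive} negation of the Wajsberg signature: you need it to write $x^{**}=x^*\ra 0$ in the first half, and to read $x^*\ra 1^*$ as $x^{**}$ in the second. You justify it by ``Lemma \ref{qbe-10}$(3)$ with $y:=1$'', but that lemma is a statement about bounded BE algebras in which $x^*$ is \emph{defined} as $x\ra 0$; its proof is $(BE_4)$ with $z:=0$, which only says $x\ra(1\ra 0)=1\ra(x\ra 0)$ and carries no information about the unary operation $^*$ given as part of the Wajsberg structure (the BE axioms do not mention $^*$ at all). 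The identity $x^*=x\ra 1^*$ does hold in Wajsberg algebras, but it is a nontrivial consequence of $(W_1)$--$(W_4)$ of essentially the same depth as involutivity itself, and in \cite{Font1} the two are established together; invoking it here is therefore circular. The same remark applies to your opening assumption that ``boundedness with $0=1^*$ is given'': if you take that sentence of the preliminaries on faith, you should equally take the adjacent clause asserting involutivity, which is precisely what the paper does for $(1)$ by citing \cite[Th. 3.3.11]{Ior35}. As written, your proof of $(1)$ does not close this circle; either supply an axiomatic derivation of $1^*\le x$, $x^{**}=x$ and $x^*=x\ra 1^*$ from $(W_1)$--$(W_4)$, or fall back on the citation as the paper does.
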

\begin{proof} 
$(1)$: See \cite[Th. 3.3.11]{Ior35}. \\
$(2)$: It follows by Definition \ref{pr-70}. \\
$(3)$: If $x\le y$, then $x\ra y=1$ and we have: \\
$\hspace*{0.50cm}$ $x\wedge y=((x^*\ra y^*)\ra y^*)^*=((y^*\ra x^*)\ra x^*)^*=(x\ra (y^*\ra x^*)^*)^*$ \\
$\hspace*{1.50cm}$ $=(x\ra (x\ra y)^*)^*=(x\ra 1^*)^*=x^{**}=x$. \\
Hence $x\le_Q y$. \\
$(4)$: Let $x\le y$ and $y\le z$, that is $x\ra y=1$ and $y\ra z=1$. 
Then we have: \\
$\hspace*{0.50cm}$ $x\ra z=x\ra (1\ra z)=x\ra ((y\ra z)\ra z)=x\ra ((z\ra y)\ra y)$ \\
$\hspace*{1.60cm}$ $=(z\ra y)\ra (x\ra y)=(z\ra y)\ra1=1$. \\
It follows that $x\le z$, that is $\le$ is transitive. 
\end{proof}

\begin{corollary} \label{cqbe-30} If $(X,\ra,^*,1)$ is a Wajsberg algebra, then $\le_Q\Longleftrightarrow \le$.
\end{corollary}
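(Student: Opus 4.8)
The plan is to prove the biconditional by establishing the two implications separately and then combining them, since each direction is already available (in essence) from earlier results. The first thing I would do is record the key structural observation that makes everything work: a Wajsberg algebra $(X,\ra,^*,1)$ is an involutive BE algebra. Indeed, the BE axioms $(BE_1)$, $(BE_2)$, $(BE_3)$, $(BE_4)$ are precisely $(W_5)$, $(W_6)$, $(W_1)$, $(W_7)$, and involutivity is exactly Proposition \ref{cqbe-20}$(1)$. This observation is what licenses the use of the general theory of involutive BE algebras developed in Section 3.

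For the implication $x\le y \Ri x\le_Q y$, no new work is required: this is exactly the content of Proposition \ref{cqbe-20}$(3)$, proved just above via $(W_3)$ and the involutive identities of Lemma \ref{qbe-10}.

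For the reverse implication $x\le_Q y \Ri x\le y$, I would appeal directly to Proposition \ref{qbe-20}$(4)$, which holds in \emph{any} involutive BE algebra and states precisely that $x\le_Q y$ implies $x\le y$. Since, by the opening observation, the Wajsberg algebra $X$ is such an algebra, this implication applies verbatim.

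Combining the two directions gives $x\le_Q y$ if and only if $x\le y$, i.e.\ $\le_Q\,\Longleftrightarrow\,\le$, as claimed. I expect no genuine obstacle here: the corollary is a direct synthesis of Proposition \ref{cqbe-20}$(3)$ with Proposition \ref{qbe-20}$(4)$, and the only point demanding a moment's care is the recognition that a Wajsberg algebra satisfies the involutive BE algebra axioms, so that the general result of Proposition \ref{qbe-20}$(4)$ is indeed applicable.
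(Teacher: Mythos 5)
Your proposal is correct and follows essentially the same route as the paper: both establish that a Wajsberg algebra is an involutive BE algebra and then combine Proposition \ref{qbe-20}$(4)$ for the direction $\le_Q\Rightarrow\le$ with Proposition \ref{cqbe-20}$(3)$ for the converse. The only cosmetic difference is that the paper justifies involutivity via $\vee$-commutativity of the underlying BE algebra, whereas you cite Proposition \ref{cqbe-20}$(1)$ directly.
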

\begin{proof} 
By $(W_5)$, $(W_6)$, $(W_1)$, $(W_7)$ and $(W_3)$, it follows that $X$ is a $\vee$-commutative BE algebra, so that $X$ 
is an involutive BE algebra. 
Using Proposition \ref{qbe-20}$(4)$, $x\le_Q y$ implies $x\le y$. According to Proposition \ref{cqbe-20}$(3)$, 
$x\le y$ implies $x\le_Q y$. 
Hence $\le_Q\Longleftrightarrow \le$. 
\end{proof}

\begin{proposition} \label{cqbe-60} 
Let $(X,\ra,^*,1)$ be a Wajsberg algebra. Then, the following hold, for all $x,y,z\in X$: \\
$(1)$ $(y\vee x)\ra y=x\ra y;$ \\
$(2)$ $x\wedge (x\vee y)=x;$ \\
$(3)$ $(y\vee x)\vee y=x\vee y;$ \\
$(4)$ $(x\wedge y)\wedge z=y\wedge (x\wedge z);$ \\
$(5)$ $x\ra ((x\wedge y)\wedge x)=x\ra y;$ \\ 
$(6)$ $x\wedge (x^*\ra y)=x;$ \\            
$(7)$ $(x\ra y)\ra (x\wedge y)=x;$ \\       
$(8)$ $(z\wedge x)\ra (y\wedge x)=(z\wedge x)\ra y;$ \\  
$(9)$ $(x\ra y)^*\wedge x=(x\ra y)^*;$ \\   
$(10)$ $(x\wedge y)\wedge y=x\wedge y;$ \\       
$(11)$ $x\wedge y\le_Q y \le_Q x\vee y;$ \\  
$(12)$ $(x\wedge y)\wedge (y\wedge z)=(x\wedge y)\wedge z$.   
\end{proposition}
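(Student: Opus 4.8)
The plan is to exploit that a Wajsberg algebra is an involutive, $\vee$-commutative BE algebra (as recorded in the proof of Corollary \ref{cqbe-30}), so that Lemma \ref{qbe-10} and Propositions \ref{qbe-20}, \ref{qbe-30} are available, together with the facts that $\wedge$ is commutative (Proposition \ref{cqbe-20}$(2)$), that $\le_Q\,\LR\,\le$ (Corollary \ref{cqbe-30}), and that $\le$ is a partial order (a BCK order, with transitivity from Proposition \ref{cqbe-20}$(4)$); I would prove everything directly from this machinery rather than assuming that a Wajsberg algebra is a quantum-Wajsberg algebra. Several items are then immediate. For $(2)$, $(10)$ and $(11)$ I would start from $x\wedge y\le y\le x\vee y$ and $x\le x\vee y$ (Proposition \ref{qbe-30}$(3)$), upgrade $\le$ to $\le_Q$ via Proposition \ref{cqbe-20}$(3)$, and read off the claimed equalities from the definition of $\le_Q$. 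For $(1)$ and $(3)$ I would use $\vee$-commutativity to replace $y\vee x$ by $x\vee y=(x\ra y)\ra y$ and then apply the BCK identity $((x\ra y)\ra y)\ra y=x\ra y$ from Proposition \ref{pr-60}$(4)$.

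The technical hinge is the inequality $(x\ra y)^*\le x$, equivalently $x^*\le x\ra y$. I would prove it by computing $x^*\ra(x\ra y)=x\ra(x^*\ra y)=x\ra(y^*\ra x)=1$, using $(BE_4)$, Lemma \ref{qbe-10}$(5)$ and Lemma \ref{qbe-10}$(1)$, and then passing to complements via Lemma \ref{qbe-10}$(6)$. From this, $(9)$ and $(6)$ follow at once: $(x\ra y)^*\le_Q x$ yields $(x\ra y)^*\wedge x=(x\ra y)^*$, while $x\le x^*\ra y=y^*\ra x$ (Lemma \ref{qbe-10}$(1)$,$(5)$) yields $x\wedge(x^*\ra y)=x$. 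The core identity is $(7)$: writing $x\wedge y=y\wedge x=((x\ra y)\ra x^*)^*$ (commutativity and Lemma \ref{qbe-10}$(6)$) and applying Lemma \ref{qbe-10}$(3)$ twice, the term $(x\ra y)\ra(x\wedge y)$ collapses to $x\vee(x\ra y)^*$, which equals $x$ because $(x\ra y)^*\le_Q x$ together with Proposition \ref{qbe-20}$(1)$.

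Next I would derive the absorption law $x\ra(x\wedge y)=x\ra y$. One inequality comes from $x\wedge y\le y$ and monotonicity (Proposition \ref{pr-60}$(1)$); the reverse reduces, via the exchange property Proposition \ref{pr-60}$(5)$, to $x\le (x\ra y)\ra(x\wedge y)$, which is precisely $(7)$. With this in hand, $(5)$ follows since $(x\wedge y)\wedge x=x\wedge(x\wedge y)=x\wedge y$ by Proposition \ref{qbe-20}$(7)$, and $(8)$ follows by the same chain of manipulations as in Proposition \ref{qbe-100}$(8)$, using its Wajsberg counterpart $x\ra(y\wedge x)=x\ra(x\wedge y)=x\ra y$.

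Finally, for $(4)$ and $(12)$ I would show that $\wedge$ is the infimum for $\le$. The lower bound $x\wedge y\le x,y$ is Proposition \ref{qbe-30}$(3)$; conversely, if $w\le x$ and $w\le y$ then $w=w\wedge x$, so applying $(8)$ with $z:=w$ gives $w\ra(x\wedge y)=w\ra(y\wedge x)=w\ra y=1$, i.e. $w\le x\wedge y$. Since $\le$ is a partial order, $\wedge$ is therefore associative, and both sides of $(4)$ and $(12)$ equal the greatest lower bound of $\{x,y,z\}$ (using also idempotency $y\wedge y=y$ from Proposition \ref{qbe-20}$(2)$ and commutativity). I expect the main obstacle to be exactly the key identity $(7)$ and the absorption law $x\ra(x\wedge y)=x\ra y$: these are the genuinely non-formal steps where $\vee$-commutativity (Wajsberg's $(W_3)$) is essential, and everything else—including the associativity needed for $(4)$ and $(12)$—cascades from them. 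An alternative route for $(4)$ and $(12)$ is to invoke the definitional equivalence of Wajsberg and MV algebras and the lattice structure of the latter, but the infimum argument above keeps the proof self-contained.
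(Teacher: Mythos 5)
Your proof is correct, but it takes a genuinely different route from the paper's. The paper obtains items $(1)$--$(3)$ and $(8)$--$(12)$ by transporting (or mimicking, step by step) the corresponding quantum-Wajsberg facts from Propositions \ref{qbe-80} and \ref{qbe-100} using $\vee$-commutativity, and proves $(4)$, $(5)$, $(7)$ by direct computation with Lemma \ref{qbe-10}, $(BE_4)$ and Proposition \ref{pr-60}$(4)$; in particular its proof of $(5)$ is a one-line application of $((y\ra x)\ra x)\ra x=y\ra x$, more direct than your detour through $(7)$ and the absorption law $x\ra(x\wedge y)=x\ra y$. You instead stay entirely inside the BCK/BE machinery plus $\vee$-commutativity, and your two structural choices pay off: first, deriving $(4)$ and $(12)$ by showing that $\wedge$ is the infimum for the partial order $\le$ (lower bounds from Proposition \ref{qbe-30}$(3)$, the converse from $(8)$) gives associativity for free, where the paper recomputes; second, by not invoking any quantum-Wajsberg proposition you sidestep a forward reference in the paper, since Proposition \ref{cqbe-110} (every Wajsberg algebra is a quantum-Wajsberg algebra) is only proved afterwards using items $(4)$, $(5)$, $(12)$ of this very proposition, so the paper's citations of Propositions \ref{qbe-80} and \ref{qbe-100} must be read as ``repeat the same computation,'' whereas your argument is self-contained as written. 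Your key inequality $x^*\le x\ra y$ and the collapse of $(x\ra y)\ra(x\wedge y)$ to $x\vee(x\ra y)^*$ reproduce the essential content of the paper's proofs of $(6)$, $(7)$ and $(9)$, just organized around $\le_Q\Longleftrightarrow\le$ and Proposition \ref{qbe-20}$(1)$ rather than around the substitution $x\mapsto x^*$ in $(6)$.
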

\begin{proof}
$(1)$, $(2)$, $(3)$ follow applying the $\vee$-commutativity in Proposition \ref{qbe-80}$(10)$, $(8)$, $(11)$, respectively. \\
$(4)$: By $\vee$-commutativity and Proposition \ref{qbe-100}$(1)$, $(8)$, we have: \\
$\hspace*{2.10cm}$ $(x\wedge y)\wedge z=(y\wedge x)\wedge z=(y\wedge x)\wedge (x\wedge z)$ \\
$\hspace*{4.00cm}$ $=(((y\wedge x)^*\ra (x\wedge z)^*)\ra (x\wedge z)^*)^*$ \\
$\hspace*{4.00cm}$ $=((z\wedge x)\ra (y\wedge x))\ra (z\wedge x)^*)^*$ \\
$\hspace*{4.00cm}$ $=(((z\wedge x)\ra y)\ra (z\wedge x)^*)^*$ \\
$\hspace*{4.00cm}$ $=((y^*\ra (z\wedge x)^*)\ra (z\wedge x)^*)^*$ \\
$\hspace*{4.00cm}$ $=y\wedge (z\wedge x)=y\wedge (x\wedge z)$. \\
$(5)$: Applying Proposition \ref{pr-60}$(4)$, we have: \\
$\hspace*{2.00cm}$ $x\ra (x\wedge y)=(x\wedge y)^*\ra x^*=((x^*\ra y^*)\ra y^*)\ra x^*$ \\
$\hspace*{4.10cm}$ $=((y^*\ra x^*)\ra x^*)\ra x^*=y^*\ra x^*=x\ra y$. \\
By $\vee$-commutativity, we also have $x\ra (y\wedge x)=x\ra y$, and replacing $y$ by $x\wedge y$ we get the second identity. \\
$(6)$: From $0\le y$ we get $x^*\ra 0\le x^*\ra y$, hence $x\le x^*\ra y$. 
By Corollary \ref{cqbe-30}, $x\le_Q x^*\ra y$, so that $x\wedge (x^*\ra y)=x$. \\
$(7)$: Replacing $x$ by $x^*$ in $(6)$, we have $x^*\wedge (x\ra y)=x^*$, and we get: \\
$\hspace*{1.00cm}$ $(x\ra y)\ra (x\wedge y)=(x\wedge y)^*\ra (x\ra y)^*$ \\
$\hspace*{4.00cm}$ $=((x^*\ra y^*)\ra y^*)\ra (x\ra y)^*$ \\
$\hspace*{4.00cm}$ $=((y^*\ra x^*)\ra x^*)\ra (x\ra y)^*$ \\
$\hspace*{4.00cm}$ $=((x\ra y)\ra x^*)\ra (x\ra y)^*$ \\
$\hspace*{4.00cm}$ $=(x\ra (x\ra y)^*)\ra (x\ra y)^*$ \\
$\hspace*{4.00cm}$ $=x\vee (x\ra y)^*=(x^*\wedge (x\ra y))^*=(x^*)^*=x$. \\
$(8)$, $(9)$ follow similar to the proof of Propositions \ref{qbe-100}$(8)$ and \ref{qbe-80}$(4)$, respectively, 
using Lemma \ref{qbe-10}, $(BE_4)$ and $(5)$. \\
$(10)$: Similar to the proof of Proposition \ref{qbe-80}$(5)$, using $(9)$. \\
$(11)$: By $(10)$, $x\wedge y\le_Q y$. Applying $(6)$, $y\le_Q y^*\ra (x\ra y)^*=(x\ra y)\ra y=x\vee y$. \\
$(12)$: Similar to the proof of Proposition \ref{qbe-100}$(1)$, using Proposition \ref{qbe-30}$(1)$ and $(11)$. 
\end{proof}

\begin{theorem} \label{cqbe-70} Let $(X,\ra,^*,1)$ be a quantum-Wajsberg algebra. The following are equivalent: \\
$(a)$ $X$ is $\vee$-commutative; \\
$(b)$ $x\ra y=(y\vee x)\ra y$ and $x\le_Q x\vee y$, for all $x,y\in X$.
\end{theorem}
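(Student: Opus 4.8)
The plan is to prove the two implications separately, relying on the description of $\vee$ via $x\vee y=(x\ra y)\ra y$, on the properties gathered in Proposition \ref{qbe-80}, and on the fact that $\le_Q$ is a partial order.

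For $(a)\Rightarrow(b)$ I would assume $x\vee y=y\vee x$ for all $x,y$. The first identity is then immediate: $(y\vee x)\ra y=(x\vee y)\ra y=x\ra y$ by Proposition \ref{qbe-80}$(10)$. For the second assertion, Proposition \ref{qbe-80}$(6)$ provides $y\le_Q x\vee y$ for all $x,y$; interchanging $x$ and $y$ gives $x\le_Q y\vee x$, and $\vee$-commutativity rewrites the right-hand side as $x\vee y$, whence $x\le_Q x\vee y$. I expect this direction to be routine.

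The substance of the theorem is $(b)\Rightarrow(a)$, and the decisive step is to extract from the first hypothesis the auxiliary identity $x\vee y=(y\vee x)\vee y$. To obtain it I would unfold $x\vee y=(x\ra y)\ra y$ and replace $x\ra y$ by $(y\vee x)\ra y$, the first condition in $(b)$, so that $x\vee y=\bigl((y\vee x)\ra y\bigr)\ra y=(y\vee x)\vee y$, the last equality being just the definition of $\vee$ applied to $y\vee x$ and $y$. This bridge is where I expect the only real ingenuity to lie; once it is available, the partial order finishes the argument mechanically.

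Finally, I would read the second hypothesis of $(b)$ in its fully quantified form, namely $a\le_Q a\vee b$ for all $a,b$, and apply it with $a=y\vee x$ and $b=y$ to get $y\vee x\le_Q (y\vee x)\vee y=x\vee y$. Interchanging $x$ and $y$ yields the reverse relation $x\vee y\le_Q y\vee x$, and since $\le_Q$ is a partial order by Corollary \ref{qbe-100-10}, antisymmetry forces $x\vee y=y\vee x$, that is, $X$ is $\vee$-commutative. The main point to watch is to invoke the second hypothesis at the pair $(y\vee x,\,y)$ rather than only at the original $(x,y)$, which is exactly what makes the auxiliary identity pay off.
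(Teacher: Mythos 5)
Your proposal is correct and follows essentially the same route as the paper: the same unfolding $x\vee y=((y\vee x)\ra y)\ra y=(y\vee x)\vee y$, the same instantiation of $x\le_Q x\vee y$ at the pair $(y\vee x,\,y)$, and antisymmetry of $\le_Q$ to conclude. The only cosmetic difference is that for $(a)\Rightarrow(b)$ you cite Proposition \ref{qbe-80}$(6)$,$(10)$ directly instead of the packaged Proposition \ref{cqbe-60}$(1)$,$(2)$, which amounts to the same computation.
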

\begin{proof}
$(a)\Rightarrow (b)$ It follows by Proposition \ref{cqbe-60}$(1)$, $(2)$. \\
$(b)\Rightarrow (a)$
By $(b)$, we have $x\ra y=(y\vee x)\ra y$, hence $(x\ra y)\ra y=((y\vee x)\ra y)\ra y$. 
It follows that $x\vee y=(y\vee x)\vee y$, and similarly $y\vee x=(x\vee y)\vee x$. 
Replacing $x$ by $x\vee y$ and $y$ by $x$ in $x\le_Q x\vee y$ from $(b)$, we get 
$x\vee y\le_Q (x\vee y)\vee x=y\vee x$, and similarly $y\vee x\le_Q (y\vee x)\vee y=x\vee y$. 
Since $\le_Q$ is antisymmetric, we get $x\vee y=y\vee x$, that is $X$ is $\vee$-commutative. 
\end{proof}

\begin{proposition} \label{cqbe-90} Let $(X,\ra,^*,1)$ be a Wajsberg algebra. 
The following hold, for all $x,y,z\in X$: \\
$(1)$ $x\le_Q y$ implies $z\ra x\le_Q z\ra y$, $y\ra z\le_Q x\ra z$ and $y^*\le_Q x^*;$ \\
$(2)$ $x\le_Q y$ and $x\le_Q z$ imply $x\le_Q y\wedge z;$ \\
$(3)$ $y\le_Q x$ and $z\le_Q x$ imply $y\vee z\le_Q x;$ \\
$(4)$ $x\le_Q y$ implies $x\vee z\le_Q y\vee z$ and $x\wedge z\le_Q y\wedge z$. 
\end{proposition}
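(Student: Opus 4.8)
The plan is to reduce every assertion to the order $\le$ by means of Corollary \ref{cqbe-30}, which gives $\le_Q\,\Longleftrightarrow\,\le$ in any Wajsberg algebra, and then to use that a Wajsberg algebra is simultaneously a bounded $\vee$-commutative BCK algebra (so Proposition \ref{pr-60} applies) and a quantum-Wajsberg algebra (so Proposition \ref{qbe-90} applies).

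For $(1)$, I would first rewrite $x\le_Q y$ as $x\le y$ via Corollary \ref{cqbe-30}. The two conclusions $z\ra x\le z\ra y$ and $y\ra z\le x\ra z$ are then exactly Proposition \ref{pr-60}$(1)$, while $y^*\le x^*$ is the instance of the second one with $z:=0$, since $x^*=x\ra 0$ and $y^*=y\ra 0$. Reading these back through Corollary \ref{cqbe-30} yields the desired $\le_Q$ relations.

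I would dispatch $(4)$ next, as $(2)$ and $(3)$ rest on it. Since a Wajsberg algebra is a quantum-Wajsberg algebra, $x\le_Q y$ gives $x\wedge z\le_Q y\wedge z$ and $x\vee z\le_Q y\vee z$ at once by Proposition \ref{qbe-90}$(5)$, $(6)$. If a direct argument is preferred, from $x\le y$ one gets $y\ra z\le x\ra z$ and hence $x\vee z=(x\ra z)\ra z\le(y\ra z)\ra z=y\vee z$ by applying Proposition \ref{pr-60}$(1)$ twice, and the meet case then follows by applying this to $y^*\le x^*$ and dualizing through $^*$.

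Finally, for $(2)$ assume $x\le_Q y$ and $x\le_Q z$; the second hypothesis means $x=x\wedge z$, and applying $(4)$ to $x\le_Q y$ gives $x\wedge z\le_Q y\wedge z$, so $x\le_Q y\wedge z$. Part $(3)$ is dual: from $z\le_Q x$ and Proposition \ref{qbe-90}$(1)$ we have $x=x\vee z$, while $(4)$ applied to $y\le_Q x$ gives $y\vee z\le_Q x\vee z=x$, whence $y\vee z\le_Q x$. I expect no genuine obstacle here, since all the monotonicity content is already packaged in Propositions \ref{pr-60} and \ref{qbe-90}; the only care needed is to pick the correct third argument in $(4)$ and to remember that $\wedge$ is commutative (Proposition \ref{cqbe-20}$(2)$), so that the substitutions $x\wedge z=x$ and $x\vee z=x$ are exactly the right instances.
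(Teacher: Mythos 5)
Your treatment of $(1)$ coincides with the paper's: both reduce $\le_Q$ to $\le$ via Corollary \ref{cqbe-30} and quote the BCK monotonicity laws of Proposition \ref{pr-60}$(1)$ (with $z:=0$ for the negation). Your direct argument for $(4)$ is also the paper's argument: monotonicity applied twice gives $x\vee z=(x\ra z)\ra z\le (y\ra z)\ra z=y\vee z$, and the meet case is the dual computation through $^*$. Where you genuinely diverge is in $(2)$ and $(3)$: the paper proves $(2)$ by the mixed associativity identity of Proposition \ref{cqbe-60}$(12)$, computing $x\wedge(y\wedge z)=(x\wedge y)\wedge(y\wedge z)=(x\wedge y)\wedge z=x\wedge z=x$, and then obtains $(3)$ by dualizing $(2)$ through $^*$; you instead derive both from $(4)$ together with the absorption equalities $x=x\wedge z$ and $x=x\vee z$. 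Your route is shorter and makes the dependence on $(4)$ explicit (so $(4)$ must be proved first, which you do); the paper's route keeps $(2)$ independent of $(4)$ and gets $(3)$ for free by duality.

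One caution: twice you lean on results stated for quantum-Wajsberg algebras, namely Proposition \ref{qbe-90}$(5)$,$(6)$ as an alternative proof of $(4)$ and Proposition \ref{qbe-90}$(1)$ for the equality $x=x\vee z$ in $(3)$. Within this paper the licence to apply those results to a Wajsberg algebra is Proposition \ref{cqbe-110}, which is proved later and whose proof passes through Proposition \ref{cqbe-100}, which in turn cites the present Proposition \ref{cqbe-90}; taken literally this is circular. It is harmless here because you also supply a direct BCK argument for $(4)$, and the absorption $x=z\vee x=x\vee z$ already follows from Proposition \ref{qbe-20}$(1)$ (valid in any involutive BE algebra) together with $\vee$-commutativity. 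Cite those instead and the proof is clean.
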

\begin{proof}
$(1)$: It is clear, since by Corollary \ref{cqbe-30}, $\le_Q \Longleftrightarrow \le$, and $\le$ verifies these 
inequalities in any BCK algebra. \\
$(2)$: Since $x\le_Q y$ and $x\le_Q z$, we have $x=x\wedge y$ and $x=x\wedge z$. 
Using Proposition \ref{cqbe-60}$(12)$ we get $x\wedge (y\wedge z)=(x\wedge y)\wedge (y\wedge z)=(x\wedge y)\wedge z=x\wedge z=x$, 
hence $x\le_Q y\wedge z$. \\
$(3)$: From hypothesis, we get $x^*\le_Q y^*$ and $x^*\le_Q z^*$. 
Using $(2)$ we get $x^*\le_Q y^*\wedge z^*=(y\vee z)^*$, hence $y\vee z\le_Q x$. \\
$(4)$: Applying $(1)$, from $x\le_Q y$ we get $y\ra z\le_Q x\ra z$ and $(x\ra z)\ra z\le_Q (y\ra z)\ra z$, 
that is $x\vee z\le_Q y\vee z$. 
Moreover $x\le_Q y$ implies $y^*\le_Q x^*$, so that $x^*\ra z^*\le_Q y^*\ra z^*$ and 
$(y^*\ra z^*)\ra z^*\le_Q (x^*\ra z^*)\ra z^*$. 
It follows that $((x^*\ra z^*)\ra z^*)^*\le_Q ((y^*\ra z^*)\ra z^*)^*$, hence $x\wedge z\le_Q y\wedge z$. 
\end{proof}

\begin{proposition} \label{cqbe-100} Let $(X,\ra,^*,1)$ be a Wajsberg algebra. 
The following holds, for all $x,y,z\in X$: \\
$\hspace*{3cm}$ $x\ra (y\wedge z)=(x\ra y)\wedge (x\ra z)$. 
\end{proposition}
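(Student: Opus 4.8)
The plan is to reduce the identity to the restricted distributive law $(QW_2)$ of Corollary \ref{qbe-70-30}, namely $x\ra(y\wedge(z\wedge x))=(x\ra y)\wedge(x\ra z)$, which is available here because the Wajsberg algebra $X$ is a quantum-Wajsberg algebra and therefore obeys every law of the preceding section. Comparing $(QW_2)$ with the target equality, the only discrepancy is the extra occurrence of $x$ inside the meet $y\wedge(z\wedge x)$; I would absorb it using two features special to the $\vee$-commutative setting: that $\wedge$ is commutative (Proposition \ref{cqbe-20}(2)) and associative, together with the cancellation $x\ra(w\wedge x)=x\ra w$ for all $w$ (Proposition \ref{qbe-80}(1)).

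First I would record that $\wedge$ is associative in any Wajsberg algebra: a short check combining the commutativity of $\wedge$ with the identity $(x\wedge y)\wedge z=y\wedge(x\wedge z)$ of Proposition \ref{cqbe-60}(4) yields $(a\wedge b)\wedge c=a\wedge(b\wedge c)$, and in particular $y\wedge(z\wedge x)=(y\wedge z)\wedge x$. The main computation would then run
\begin{align*}
(x\ra y)\wedge(x\ra z) &= x\ra\bigl(y\wedge(z\wedge x)\bigr) \\
&= x\ra\bigl((y\wedge z)\wedge x\bigr) \\
&= x\ra(y\wedge z),
\end{align*}
the first step being $(QW_2)$, the second the associativity instance just noted, and the third Proposition \ref{qbe-80}(1) applied with $w:=y\wedge z$. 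Reading the chain backwards gives exactly $x\ra(y\wedge z)=(x\ra y)\wedge(x\ra z)$.

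I do not anticipate a genuine obstacle: all the substantive content already sits in $(QW_2)$, and the remaining work is purely the bookkeeping needed to discard the redundant factor $x$. The one point deserving care is that associativity of $\wedge$ is not postulated directly and must be extracted from Proposition \ref{cqbe-60}(4) via commutativity; once that step is secured, the statement follows at once from the three-line display above.
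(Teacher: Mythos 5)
Your computation is clean, but as justified it is circular within the paper's development. You obtain $(QW_2)$ by asserting that the Wajsberg algebra $X$ ``is a quantum-Wajsberg algebra and therefore obeys every law of the preceding section''; in the paper that assertion is Proposition \ref{cqbe-110}, which is stated and proved only \emph{after} Proposition \ref{cqbe-100}, and whose proof invokes Proposition \ref{cqbe-100} in its final step. The circularity is not just an accident of ordering: once you have commutativity of $\wedge$, associativity (correctly extracted from Proposition \ref{cqbe-60}(4)), and the absorption $x\ra (w\wedge x)=x\ra w$ (which for Wajsberg algebras is noted in the proof of Proposition \ref{cqbe-60}(5), so you need not borrow Proposition \ref{qbe-80}(1)), the instance of $(QW_2)$ you use is \emph{equivalent} to the identity $x\ra (y\wedge z)=(x\ra y)\wedge (x\ra z)$ being proved. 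Your three-line display therefore reduces the statement to itself. Nor can you appeal to Remark \ref{qbe-130} instead: the chain of definitional equivalences there never verifies axiom $(QW)$ for Wajsberg algebras --- that verification is precisely Proposition \ref{cqbe-110}.

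The paper avoids this by arguing order-theoretically, without touching $(QW)$ at all: using $\le_Q\Longleftrightarrow\le$ (Corollary \ref{cqbe-30}), the BCK exchange property $z\le y\ra x$ iff $y\le z\ra x$ (Proposition \ref{pr-60}(5)), and the fact that $\wedge$ and $\vee$ are genuine meet and join for $\le_Q$ in a Wajsberg algebra (Proposition \ref{cqbe-90}(2),(3)), it shows that an arbitrary $u$ satisfies $u\le_Q x\ra (y\wedge z)$ if and only if $u\le_Q (x\ra y)\wedge (x\ra z)$, and concludes by antisymmetry of the partial order. To repair your argument you would need an independent, purely Wajsberg-algebra derivation of $(QW_2)$ --- at which point you would essentially be reproving the proposition, most naturally by exactly the order-theoretic route the paper takes.
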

\begin{proof} 
Let $(X,\ra,^*,1)$ be a Wajsberg algebra, and let $x,y,z\in X$. 
Since $X$ is a bounded $\vee$-commutative BE algebra, by Corollary \ref{cqbe-30} we have 
$\le_Q \Longleftrightarrow \le$. 
Using Propositions \ref{cqbe-90} and \ref{pr-60}$(5)$, for any $u\in X$ we have: \\
$\hspace*{0.20cm}$ $u\le_Q x\ra (y\wedge z)$ iff $u\le_Q (y\wedge z)^*\ra x^*$ iff $u\le_Q (y^*\vee z^*)\ra x^*$ \\
$\hspace*{3.00cm}$ iff $y^*\vee z^*\le_Q u\ra x^*$ \\
$\hspace*{3.00cm}$ iff $y^*\le_Q u\ra x^*$ and $z^*\le_Q u\ra x^*$ \\ 
$\hspace*{3.00cm}$ iff $u\le_Q y^*\ra x^*=x\ra y$ and $u\le_Q z^*\ra x^*=x\ra z$ \\
$\hspace*{3.00cm}$ iff $u\le_Q (x\ra y)\wedge (x\ra z)$. \\
Since in Wajsberg algebras the relation $\le$ is a partial order, we conclude that 
$x\ra (y\wedge z)=(x\ra y)\wedge (x\ra z)$. 
\end{proof}

\begin{proposition} \label{cqbe-110} Any Wajsberg algebra $(X,\ra,^*,1)$ is a quantum-Wajsberg algebra. 
\end{proposition}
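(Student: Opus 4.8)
The plan is to verify directly that a Wajsberg algebra $(X,\ra,^*,1)$ meets the two requirements of Definition \ref{qbe-70}: that it is an involutive BE algebra, and that it satisfies the axiom (QW). The substantive work has in fact already been done in the preceding propositions, so the proof should be short.

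First I would record that $X$ is an involutive BE algebra. This is exactly what is extracted in the proof of Corollary \ref{cqbe-30}: axioms $(W_5)$, $(W_6)$, $(W_1)$, $(W_7)$ yield $(BE_1)$--$(BE_4)$, while $(W_3)=(\vee\text{-}comm)$ supplies $\vee$-commutativity, and Wajsberg algebras are involutive. Consequently the operations $\wedge$, $\vee$ and the relation $\le_Q$ are all defined on $X$, and $\le_Q\Longleftrightarrow\le$ by Corollary \ref{cqbe-30}.

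Next I would deduce (QW) from the distributivity established in Proposition \ref{cqbe-100}, namely $x\ra(y\wedge z)=(x\ra y)\wedge(x\ra z)$. Applying this with $y$ replaced by $x\wedge y$ and $z$ replaced by $z\wedge x$ yields
\[
x\ra\bigl((x\wedge y)\wedge(z\wedge x)\bigr)=\bigl(x\ra(x\wedge y)\bigr)\wedge\bigl(x\ra(z\wedge x)\bigr).
\]
Now the two factors on the right collapse: $x\ra(x\wedge y)=x\ra y$ and $x\ra(z\wedge x)=x\ra z$ are precisely the two identities recorded in the proof of Proposition \ref{cqbe-60}$(5)$ (the first via Proposition \ref{pr-60}$(4)$, the second via $\vee$-commutativity). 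Substituting these in gives $x\ra((x\wedge y)\wedge(z\wedge x))=(x\ra y)\wedge(x\ra z)$, which is exactly (QW). Hence $X$ is a quantum-Wajsberg algebra.

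I do not expect a genuine obstacle, since the main content---the distributivity of $\ra$ over $\wedge$---is supplied by Proposition \ref{cqbe-100}. The only point meriting care is keeping track of the sides in the two collapsing identities: $x\ra(z\wedge x)=x\ra z$ genuinely relies on $\vee$-commutativity, so it is available precisely because we are working in a Wajsberg algebra. As an alternative route one could instead invoke Theorem \ref{qbe-70-40} and check $(QW_1)$ and $(QW_2)$ separately, but that merely repackages the same two identities together with Proposition \ref{cqbe-100}, so the direct argument above is preferable.
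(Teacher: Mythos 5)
Your proof is correct, and it rests on the same key ingredient as the paper's: the distributivity $x\ra(y\wedge z)=(x\ra y)\wedge(x\ra z)$ of Proposition \ref{cqbe-100} together with the collapsing identities $x\ra(x\wedge y)=x\ra y$ and $x\ra(y\wedge x)=x\ra y$ from the proof of Proposition \ref{cqbe-60}$(5)$. The difference is in the order of operations: the paper first normalizes the nested meet, rewriting $(x\wedge y)\wedge(z\wedge x)$ as $x\wedge(y\wedge z)$ via Proposition \ref{cqbe-60}$(12)$ and $(4)$ (and $\wedge$-commutativity), then collapses and applies distributivity once; you instead apply Proposition \ref{cqbe-100} immediately to the outer meet, with $y$ and $z$ instantiated as $x\wedge y$ and $z\wedge x$, and then collapse each factor separately. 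Your route is slightly leaner, since it bypasses the associativity-type rearrangements of Proposition \ref{cqbe-60}$(12)$ and $(4)$ entirely; what the paper's version buys in exchange is an explicit record that in the Wajsberg case the argument of the implication reduces to the plain meet $x\wedge(y\wedge z)$, which makes the comparison with axiom $(QW)$ more transparent. Both are valid proofs of the proposition.
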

\begin{proof} 
Let $(X,\ra,^*,1)$ be a Wajsberg algebra, and let $x,y,z\in X$. 
Using Proposition \ref{cqbe-60}$(12)$,$(4)$,$(5)$ and Proposition \ref{cqbe-100}, we get: \\
$\hspace*{0.90cm}$ $x\ra ((x\wedge y)\wedge (z\wedge x))=x\ra ((y\wedge x)\wedge (x\wedge z))
                        =x\ra ((y\wedge x)\wedge z)$ \\
$\hspace*{4.50cm}$ $=x\ra (x\wedge (y\wedge z))=x\ra (y\wedge z)$ \\
$\hspace*{4.50cm}$ $=(x\ra y)\wedge (x\ra z)$. \\
It follows that axiom $(QW)$ is satisfied, hence $X$ is a quantum-Wajsberg algebra. 
\end{proof}

\begin{theorem} \label{cqbe-120} 
A quantum-Wajsberg algebra $(X,\ra,^*,1)$ is a Wajsberg algebra if and only if, for all $x,y\in X$, 
$x\le y$ implies $x\le_Q y$. 
\end{theorem}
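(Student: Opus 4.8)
The plan is to handle the two implications separately and to observe that the forward direction is already in hand. If $X$ is a Wajsberg algebra, then Proposition \ref{cqbe-20}$(3)$ says precisely that $x\le y$ implies $x\le_Q y$, so that direction needs no further argument. All the work is in the converse.

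For the converse, I would assume that $x\le y$ implies $x\le_Q y$ for all $x,y\in X$ and aim to prove that $X$ is $\vee$-commutative; once that is done, Remark \ref{qbe-130} (a $\vee$-commutative quantum-Wajsberg algebra is a Wajsberg algebra) finishes the proof. To get $\vee$-commutativity I would verify condition $(b)$ of Theorem \ref{cqbe-70}, that is, $x\ra y=(y\vee x)\ra y$ and $x\le_Q x\vee y$ for all $x,y$.

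The hypothesis enters only through the second requirement: Proposition \ref{qbe-30}$(3)$ gives $x\le x\vee y$ for all $x,y$, and the assumed implication upgrades this to $x\le_Q x\vee y$. The first identity I expect to hold in every quantum-Wajsberg algebra, with no appeal to the hypothesis: using Lemma \ref{qbe-10}$(6)$ I would rewrite $(y\vee x)\ra y=y^*\ra(y\vee x)^*$, then $(y\vee x)^*=y^*\wedge x^*$ by Proposition \ref{qbe-20}$(3)$, and finally $(QW_1)$ together with Lemma \ref{qbe-10}$(6)$ gives $y^*\ra(y^*\wedge x^*)=y^*\ra x^*=x\ra y$. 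With both parts of $(b)$ verified, Theorem \ref{cqbe-70} yields $\vee$-commutativity and Remark \ref{qbe-130} completes the argument.

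The main thing to be careful about is the identity $(y\vee x)\ra y=x\ra y$: it looks like Proposition \ref{cqbe-60}$(1)$, which is proved under $\vee$-commutativity, so I must make sure it really follows from $(QW_1)$ alone and is not secretly circular. Once that is settled, the hypothesis is used solely to produce $x\le_Q x\vee y$, and the rest is a direct invocation of Theorem \ref{cqbe-70} and Remark \ref{qbe-130}.
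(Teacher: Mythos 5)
Your proof is correct, and the converse direction takes a genuinely different route from the paper's. The paper argues directly: using $(BE_4)$ and $(QW_1)$ it computes $(x\wedge y)\ra (y\wedge x)=1$, hence $x\wedge y\le y\wedge x$; the hypothesis upgrades this to $x\wedge y\le_Q y\wedge x$ and, symmetrically, $y\wedge x\le_Q x\wedge y$, so antisymmetry of $\le_Q$ gives $x\wedge y=y\wedge x$, i.e.\ $\vee$-commutativity. You instead route through Theorem \ref{cqbe-70}. Your worry about circularity is the right one to have, and your resolution is sound: the identity $(y\vee x)\ra y=x\ra y$ follows from Lemma \ref{qbe-10}$(6)$, Proposition \ref{qbe-20}$(3)$ and $(QW_1)$ alone (it is the mirror image of Proposition \ref{qbe-80}$(10)$, whose proof uses $x\ra(y\wedge x)=x\ra y$ where yours uses $x\ra(x\wedge y)=x\ra y$), so no $\vee$-commutativity is presupposed. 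The hypothesis is then used exactly once, to upgrade $x\le x\vee y$ from Proposition \ref{qbe-30}$(3)$ to $x\le_Q x\vee y$, just as the paper uses it exactly once on $x\wedge y\le y\wedge x$; both arguments ultimately rest on antisymmetry of $\le_Q$, yours indirectly via the proof of Theorem \ref{cqbe-70}. The paper's version is self-contained, while yours reuses the already-established characterization and is shorter on the page. Your forward direction via Proposition \ref{cqbe-20}$(3)$ is also fine; the paper instead combines Proposition \ref{qbe-80}$(12)$ with $\vee$-commutativity, which amounts to the same thing.
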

\begin{proof} 
Let $(X,\ra,^*,1)$ be a Wajsberg algebra, and let $x,y\in X$. 
According to Proposition \ref{cqbe-110}, $X$ is a quantum-Wajsberg algebra. 
If $x\le y$, by Proposition \ref{qbe-80}$(12)$ and by $\vee$-commutativity, we have $x=y\wedge x=x\wedge y$, hence $x\le_Q y$. 
Conversely, suppose that $X$ is a quantum-Wajsberg algebra such that, for all $x,y\in X$, $x\le y$ implies $x\le_Q y$. 
Using $(BE_4)$ and $(QW_1)$, we get: \\
$\hspace*{0.65cm}$ $(x\wedge y)\ra (y\wedge x)=(y\wedge x)^*\ra (x\wedge y)^*$ \\
$\hspace*{3.50cm}$ $=(y\wedge x)^*\ra ((x^*\ra y^*)\ra y^*)$ \\
$\hspace*{3.50cm}$ $=(x^*\ra y^*)\ra ((y\wedge x)^*\ra y^*)$ \\
$\hspace*{3.50cm}$ $=(y\ra x)\ra (y\ra (y\wedge x))$ \\
$\hspace*{3.50cm}$ $=(y\ra x)\ra (y\ra x)=1$. \\
It follows that $x\wedge y\le y\wedge x$, hence $x\wedge y\le_Q y\wedge x$, and similarly $y\wedge x\le_Q x\wedge y$. 
Since $\le_Q$ is antisymmetric, we get $x\wedge y=y\wedge x$, that is $X$ is a Wajsberg algebra.    
\end{proof}

\begin{proposition} \label{cqbe-40} 
If $(X,\ra,^*,1)$ is a Wajsberg algebra, then $(X,\le,\ra,1)$ is a unital commutative quantum-B algebra.  
\end{proposition}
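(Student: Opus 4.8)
The plan is to exhibit the second implication of the quantum-B structure as a copy of the given one: set $x\rs y:=x\ra y$ for all $x,y\in X$. This makes the structure commutative by definition, and it makes the unit condition immediate, since $(W_1)$ gives $1\ra x=x=1\rs x$, so $1$ is the (necessarily unique) unit element. With this choice, verifying Definition \ref{pr-40} collapses to checking that $(X,\le)$ is a poset and that the single operation $\ra$ satisfies $(QB_1)$, $(QB_3)$ and $(QB_4)$, because $(QB_2)$ becomes literally $(QB_1)$.

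First I would check that $\le$ (given by $x\le y$ iff $x\ra y=1$) is a partial order. Reflexivity is $(W_5)$. For antisymmetry, if $x\ra y=1$ and $y\ra x=1$, then $(W_3)$ gives $(x\ra y)\ra y=(y\ra x)\ra x$, and applying $(W_1)$ to both sides yields $y=1\ra y=1\ra x=x$. Transitivity is exactly Proposition \ref{cqbe-20}$(4)$. (Alternatively, one may invoke Remark \ref{qbe-130} that a Wajsberg algebra is a bounded $\vee$-commutative BCK algebra and quote Proposition \ref{pr-60}, but the direct derivation from the $(W)$-axioms is cleaner.)

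Next I would verify the three implicational axioms. For $(QB_1)$, i.e. $(y\ra z)\ra\bigl((x\ra y)\ra(x\ra z)\bigr)=1$, the key move is to swap the two outer antecedents by $(W_7)$, rewriting the left-hand side as $(x\ra y)\ra\bigl((y\ra z)\ra(x\ra z)\bigr)$; this last expression is an instance of $(W_2)$ (with the substitution $y\mapsto x,\ z\mapsto y,\ x\mapsto z$) and hence equals $1$. Since $\rs=\ra$, this simultaneously gives $(QB_2)$. For $(QB_3)$, suppose $y\le z$, so $y\ra z=1$; then $(QB_1)$ reads $1\le (x\ra y)\ra(x\ra z)$, and since $1\le a$ means $1\ra a=1$, i.e. $a=1$ by $(W_1)$, we conclude $(x\ra y)\ra(x\ra z)=1$, that is $x\ra y\le x\ra z$. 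Finally, $(QB_4)$ with $\rs=\ra$ asks that $x\le y\ra z$ iff $y\le x\ra z$; unfolding both sides to $x\ra(y\ra z)=1$ and $y\ra(x\ra z)=1$, these coincide by the exchange identity $(W_7)$.

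I do not expect a genuine obstacle here: the entire argument is a bookkeeping exercise in the Wajsberg axioms. The only point requiring a moment of recognition is that $(W_2)$ is precisely the BCK-transitivity law $(BCK_1)$ after relabeling, so that the combination of the exchange property $(W_7)$ with $(W_2)$ delivers $(QB_1)$ in one stroke; once that is seen, $(QB_3)$ follows formally from $(QB_1)$ and $(QB_4)$ follows directly from $(W_7)$, completing the verification that $(X,\le,\ra,1)$ is a unital commutative quantum-B algebra.
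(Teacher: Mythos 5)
Your proof is correct, but it takes a genuinely different route from the paper's. The paper disposes of the statement in two citations: a Wajsberg algebra is a $\vee$-commutative BE algebra, hence by a result of Walendziak a ($\vee$-commutative) BCK algebra, and by a result of Rump every BCK algebra is a unital commutative quantum-B algebra. You instead verify the quantum-B axioms directly from the Wajsberg axioms: setting $\rs\,=\,\ra$ handles commutativity and collapses $(QB_2)$ into $(QB_1)$; $(W_1)$ gives unitality; reflexivity, antisymmetry and transitivity of $\le$ come from $(W_5)$, from $(W_3)$ with $(W_1)$, and from Proposition \ref{cqbe-20}$(4)$ respectively; $(QB_1)$ is $(W_2)$ after an application of the exchange law $(W_7)$; $(QB_3)$ then follows formally from $(QB_1)$; and $(QB_4)$ is exactly $(W_7)$. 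All of these steps check out. What your approach buys is a self-contained argument that pinpoints which Wajsberg axiom is responsible for which quantum-B axiom (in particular that $(W_2)$ is the BCK-transitivity law in disguise, and that $(QB_4)$ is nothing but the exchange property in the commutative case); what the paper's approach buys is brevity and the explicit placement of the result inside the known chain of equivalences between $\vee$-commutative BE algebras, BCK algebras, and integral quantum-B algebras. Either proof is acceptable.
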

\begin{proof} 
Since $X$ is a $\vee$-commutative BE algebra, according to \cite[Cor. 3.5]{Walend1}, it is a ($\vee$-commutative) 
BCK algebra. 
By \cite[Cor. of Prop. 12]{Rump2}, any BCK algebra is a unital (commutative) quantum-B algebra 
(see also \cite[Ex. 3.2]{Rump1}). 
\end{proof}

A quantum-Wajsberg algebra $(X,\ra,^*,1)$ is \emph{self distributive} if $X$ is self distributive as BE algebra. 

\begin{corollary} \label{cqbe-40-10} 
If $(X,\ra,^*,1)$ is a self distributive quantum-Wajsberg algebra, then $(X,\le,\ra,1)$ is a unital commutative quantum-B algebra.  
\end{corollary}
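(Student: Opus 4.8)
The plan is to chain together the structural results already established, since this is a corollary of Proposition~\ref{cqbe-40}. The only content to supply is the observation that a self distributive quantum-Wajsberg algebra is in fact a Wajsberg algebra; once that is in hand, Proposition~\ref{cqbe-40} delivers the conclusion immediately.

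First I would note that, by the definition given just before this corollary, a self distributive quantum-Wajsberg algebra $(X,\ra,{}^*,1)$ is self distributive as a BE algebra, and by Definition~\ref{qbe-70} its underlying reduct is an involutive BE algebra. Thus $X$ is a self distributive involutive BE algebra, and Proposition~\ref{qbe-30}$(4)$ applies, giving $x\wedge y=y\wedge x$ and $x\vee y=y\vee x$ for all $x,y\in X$; that is, $X$ is $\vee$-commutative in the sense of Definition~\ref{cqbe-10}.

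Next I would invoke the identification recorded in Remark~\ref{qbe-130} (and stated explicitly in the sentence following Proposition~\ref{qbe-30}): a $\vee$-commutative involutive BE algebra, being bounded, is a bounded $\vee$-commutative BCK algebra and hence, through the definitional equivalences with MV algebras, a Wajsberg algebra. Therefore $X$ is a Wajsberg algebra. Finally, applying Proposition~\ref{cqbe-40} to this Wajsberg algebra shows that $(X,\le,\ra,1)$ is a unital commutative quantum-B algebra, which is exactly the assertion.

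There is no genuine obstacle here: every implication in the chain is already proved earlier in the excerpt, so the argument is purely a matter of citing Proposition~\ref{qbe-30}$(4)$, Remark~\ref{qbe-130}, and Proposition~\ref{cqbe-40} in order. The one point deserving a word of care is the passage from $\vee$-commutativity to the Wajsberg structure, since it relies on the (bounded, involutive) setting and on the definitional equivalences summarized in Remark~\ref{qbe-130} rather than on a direct verification of axioms $(W_1)$--$(W_4)$; but as those equivalences are taken as known, the proof remains a short deduction.
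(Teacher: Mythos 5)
Your proof is correct and follows exactly the chain the paper intends: Proposition \ref{qbe-30}$(4)$ gives $\vee$-commutativity, Remark \ref{qbe-130} (together with the remark following Proposition \ref{qbe-30}) identifies such an algebra as a Wajsberg algebra, and Proposition \ref{cqbe-40} then yields the conclusion. The paper leaves this corollary without an explicit proof precisely because this is the evident argument, so your write-up matches it.
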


\begin{example} \label{cqbe-50} 
Let $X=\{0,a,b,c,1\}$ and consider the operations $\ra$, $\wedge$ and $^*$ given in the following tables:  
\[
\begin{array}{c|ccccccc}
\ra & 0 & a & b & c & 1 \\ \hline
0   & 1 & 1 & 1 & 1 & 1 \\ 
a   & b & 1 & c & a & 1 \\ 
b   & a & 1 & 1 & 1 & 1 \\ 
c   & c & 1 & a & 1 & 1 \\
1   & 0 & a & b & c & 1  
\end{array}
\hspace{10mm}
\begin{array}{c|ccccccc}
\wedge & 0 & a & b & c & 1 \\ \hline
0    & 0 & 0 & 0 & 0 & 0 \\ 
a    & 0 & a & b & c & a \\ 
b    & 0 & b & b & b & b \\ 
c    & 0 & c & b & c & c \\
1    & 0 & a & b & c & 1 
\end{array}
\hspace{10mm}
\begin{array}{c|c}
x & x^* \\ \hline
0 & 1 \\ 
a & b \\  
b & a \\ 
c & c \\
1 & 0   
\end{array}
\]
Then, $(X,\ra,^*,1)$ is a Wajsberg algebra, and $(X,\le,\ra,1)$ is a commutative quantum-B algebra.
\end{example}

$\vspace*{1mm}$

\section{Connection with quantum-MV algebras}

In this section, we establish the connection between quantum-Wajsberg algebras and quantum-MV algebras, by proving 
that the two quantum structures are term-equivalent.  
We show that, in general, the quantum-MV algebras are not (commutative) quantum-B algebras and we prove that in the case of a self distributive quantum-Wajsberg algebra the corresponding quantum-MV algebra is an MV algebra. 
We deduce certain properties of quantum-MV algebras, and we give conditions for a quantum-MV algebra to be an 
MV algebra. 

\begin{theorem} \label{qmv-30}
The class of (left-)quantum-MV algebras is term-equivalent to the class of (left-)quantum-Wajsberg algebras.  
\end{theorem}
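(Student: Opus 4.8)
The plan is to build on Remark~\ref{qbe-05}, which already establishes a definitional equivalence between involutive (left-)BE algebras $(X,\ra,^*,1)$ and involutive (left-)m-BE algebras $(X,\odot,^*,1)$ via the mutually inverse transformations $\Phi:\ x\odot y=(x\ra y^*)^*$ and $\Psi:\ x\ra y=(x\odot y^*)^*$. Since a quantum-Wajsberg algebra is just an involutive BE algebra plus the single axiom (QW), and a quantum-MV algebra is just an involutive m-BE algebra plus the single axiom (Pqmv), it suffices to prove that, under $\Phi$ and $\Psi$, axiom (QW) is equivalent to axiom (Pqmv). The underlying carrier, the involution, and the constant $1$ are shared, so once the two extra axioms are matched the definitional equivalence follows.

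First I would collect the ingredients from the preliminaries. Besides the transformations above, I would use the fact (recorded right after Lemma~\ref{qbe-10}) that the derived operations coincide, namely $x\wedge y=x\wedge_m^M y$ and $x\vee y=x\vee_m^M y$; the involutivity $x^{**}=x$; and the De Morgan laws $(x\wedge y)^*=x^*\vee y^*$ and $(x\vee y)^*=x^*\wedge y^*$, which follow from the definition of $\vee$ and Proposition~\ref{qbe-20}(3). These let me move freely between the $\ra$-presentation and the $\odot$-presentation.

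Then I would carry out the translation of (QW) term by term. Rewriting the left-hand side of (QW) with $x\ra w=(x\odot w^*)^*$ and the right-hand side with the De Morgan law $a^*\wedge b^*=(a\vee b)^*$ turns (QW) into
\[
\bigl(x\odot((x\wedge y)\wedge(z\wedge x))^*\bigr)^* = \bigl((x\odot y^*)\vee(x\odot z^*)\bigr)^*.
\]
Applying $^*$ to both sides (legitimate by involutivity) and expanding $((x\wedge y)\wedge(z\wedge x))^*=(x^*\vee y^*)\vee(z^*\vee x^*)$ by De Morgan gives
\[
x\odot\bigl((x^*\vee y^*)\vee(z^*\vee x^*)\bigr) = (x\odot y^*)\vee(x\odot z^*).
\]
Finally, replacing $y$ by $y^*$ and $z$ by $z^*$ — a bijection of $X$ since the algebra is involutive — yields precisely (Pqmv) with $\vee=\vee_m^M$. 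Because every step is a strict equivalence, the same chain read backwards shows that (Pqmv) implies (QW).

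Since the translation is reversible and $\Phi,\Psi$ are mutually inverse by Remark~\ref{qbe-05}, an involutive BE algebra satisfies (QW) if and only if the associated involutive m-BE algebra satisfies (Pqmv), which is exactly the asserted definitional equivalence. I expect no genuinely hard step: the only delicate point is bookkeeping, keeping the $\wedge/\vee$ De Morgan dualities straight and tracking the substitution $y\mapsto y^*$, $z\mapsto z^*$ so that the occurrences of $x^*$ in (Pqmv) correspond correctly to the occurrences of $x$ in (QW). Everything reduces to the identities already available in the preliminaries, so the argument should be a clean computation rather than hinge on a new lemma.
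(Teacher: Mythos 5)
Your proposal is correct and follows essentially the same route as the paper: reduce to Remark~\ref{qbe-05}'s equivalence of the underlying involutive BE and m-BE structures, then translate the single extra axiom via $\Phi$, $\Psi$, the De Morgan dualities, and the substitution $y\mapsto y^*$, $z\mapsto z^*$. The paper performs the identical computation starting from (Pqmv) and arriving at (QW), whereas you go in the opposite direction; since every step is an equivalence, the two are the same argument.
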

\begin{proof} 
We use the transformations $\Phi$ and $\Psi$ from Remark \ref{qbe-05}. 
Since by Remark \ref{qbe-05} the involutive m-BE algebras are term-equivalent to the involutive BE algebras,  it remains to prove that $(Pqmv)\iff (QW)$. 
Applying $\Phi$, from (Pqmv) we get: \\
$\hspace*{0.50cm}$ $x\odot ((x^*\vee y)\vee (z\vee x^*))=(x\ra ((x^*\vee y)\vee (z\vee x^*))^*)^*$ \\
$\hspace*{4.30cm}$ $=(x\ra ((x\wedge y^*)\wedge (z^*\wedge x)))^*$ and \\
$\hspace*{0.50cm}$ $(x\odot y)\vee (x\odot z)=(x\ra y^*)^*\vee (x\ra z^*)^*=((x\ra y^*)\wedge (x\ra z^*))^*$, \\
hence (Pqmv) becomes: \\
$\hspace*{0.5cm}$ $(x\ra ((x\wedge y^*)\wedge (z^*\wedge x)))^*=((x\ra y^*)\wedge (x\ra z^*))^*$, \\
for all $x,y,z\in X$. Replacing $y$ by $y^*$ and $z$ by $z^*$, we get axiom (QW). 
Similarly, applying $\Psi$, axiom (QW) implies axiom (Pqmv).
\end{proof}

\begin{corollary} \label{qmv-30-10} In general, the quantum-MV algebras are not (commutative) quantum-B algebras. 
\end{corollary}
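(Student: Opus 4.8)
The plan is to transport the counterexample already constructed at the quantum-Wajsberg level across the definitional equivalence of Theorem \ref{qmv-30}, so that essentially no new computation is required. First I would recall that, by Theorem \ref{qmv-30} together with Remark \ref{qbe-05}, every quantum-MV algebra $(X,\odot,{}^*,1)$ is obtained from a uniquely determined quantum-Wajsberg algebra $(X,\ra,{}^*,1)$ via the mutually inverse maps $\Phi$ and $\Psi$, where $x\odot y=(x\ra y^*)^*$ and, conversely, $x\ra y=(x\odot y^*)^*$. In particular the implication $\ra$ is term-definable from $\odot$ and ${}^*$ on the same underlying set, so the only sensible reading of ``a quantum-MV algebra is a (commutative) quantum-B algebra'' is that the structure $(X,\le,\ra)$ built from this derived $\ra$ (taking $\rs:=\ra$) satisfies Definition \ref{pr-40}.

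With this reading fixed, I would simply apply $\Phi$ to the quantum-Wajsberg algebra of Example \ref{qbe-120} to produce a concrete quantum-MV algebra on $X=\{0,a,b,c,1\}$. Since the underlying set, the involution ${}^*$, and the derived implication $\ra$ coincide with those of Example \ref{qbe-120}, the computation recorded there --- namely that $c\le b$ while $a\ra c=1\nleq a=a\ra b$, so that the relevant quantum-B axiom fails --- carries over verbatim. Hence this quantum-MV algebra is not a commutative quantum-B algebra, which establishes the ``in general'' claim of the corollary.

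The only genuinely delicate point is conceptual rather than computational. The three structures (quantum-MV, quantum-Wajsberg, quantum-B) live in three different signatures, so the corollary has content only once one agrees that the quantum-B structure on a quantum-MV algebra is to be built from its canonically derived implication $\ra=\Psi(\odot,{}^*)$. Under that convention the definitional equivalence of Theorem \ref{qmv-30} makes ``failing the quantum-B axioms'' an intrinsic property shared by a quantum-MV algebra and its quantum-Wajsberg partner, so the failure exhibited in Example \ref{qbe-120} transfers for free and no verification beyond that example is needed.
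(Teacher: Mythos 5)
Your proposal is correct and follows essentially the same route as the paper: the paper's proof simply invokes Theorem \ref{qmv-30} (the definitional equivalence) together with Remark \ref{qbe-110-10}, which points to the counterexample of Example \ref{qbe-120}, exactly the transfer you carry out (and the resulting quantum-MV algebra is the one displayed in Example \ref{qmv-50}). Your additional discussion of the signature convention is a reasonable clarification but not a departure from the paper's argument.
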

\begin{proof}
It follows by Theorem \ref{qmv-30} and Remark \ref{qbe-110-10}. 
\end{proof}

\begin{remarks} \label{qmv-40} 
$(1)$ According to Proposition \ref{cqbe-110}, the Wajsberg algebras are quantum-Wajsberg algebras, and 
by Theorem \ref{qmv-30} the quantum-Wajsberg algebras are term-equivalent to quantum-MV algebras. 
Since by Remark \ref{qbe-130}, Wajsberg algebras are term-equivalent to MV algebras, 
it follows that any MV algebra is a quantum-MV algebra (for a directly proof see \cite[Ex. 2.3.14]{DvPu}). \\
$(2)$ Every orthomodular lattice $(L,\vee,\wedge,^{'},0,1)$ determines a quantum-MV algebra by taking $\oplus$ as the 
supremum $\vee$ and $^*$ as the orthocomplement $^{'}$ (\cite[Th. 2.3.9]{DvPu}). 
\end{remarks}

\begin{remarks} \label{qmv-40-10} Let $(X,\odot,^*,1)$ be a (left)-quantum MV algebra, and consider the notations 
(see \cite{Ior35}): \\
$(Pmv)$ $x\odot (x^*\vee_m^M y)=x\odot y$, \\
$(Pq)$ $\hspace*{0.20cm}$ $x\odot (y\vee_m^M (z\vee_m^M x^*))=(x\odot y)\vee_m^M (x\odot z)$, \\
$(Pom)$ $(x\odot y)\oplus ((x\odot y)^*\odot x)=x$ or, equivalently, $x\vee_m^M (x\odot y)=x$, \\
$(OM)$ $\hspace*{0.03cm}$ $x\wedge_m^M (y\ra x)=x$, \\ 
$($$m$-$An)$ $(x\odot y^*=0$ and $y\odot x^*=0)\Longrightarrow x=y$ (antisymmetry), \\
$($$\wedge_m^M$-$comm)$ $x\wedge_m^M y=y\wedge_m^M x$. \\  
$(1)$ According to \cite[Th. 11.2.10]{Ior35}, $(Pqmv)\iff (Pmv)+(Pq)$. 
Moreover, since $(Pq)\iff (Pom)$, then, by \cite[Th. 11.2.18]{Ior35}, $(Pqmv)\iff (Pmv)+(Pom)$. 
Using the transformations $\Phi$ and $\Psi$, we can easily prove that $(Pmv)\iff (QW_1)$, $(Pq)\iff (QW_2)$ and $(Pom)\iff (OM)$. \\
$(2)$ By \cite[Th. 11.3.5]{Ior35}, $(Pmv)$+$($$m$-$An)$ $\Longrightarrow$ $($$\wedge_m^M$-$comm)$ and, consequently, 
by \cite[Th. 11.3.9]{Ior35}, we have $(Pqmv)$+$($$m$-$An)$ $\iff$ $($$\wedge_m^M$-$comm)$ 
(i.e. a quantum-MV algebra is an MV algebra if and only if $($$m$-$An)$ (antisymmetry of $\le_m$) holds). 
\end{remarks}

In what follows, the result from Remarks \ref{qmv-40-10}$(2)$ will be extended to the case of quantum-Wajsberg algebras. 

\begin{proposition} \label{qmv-40-20} Let $(X,\ra,^*,1)$ be an involutive BE algebra satisfying conditions $(QW_1)$ 
and \\ 
$(An)$ $(x\ra y=1$ and $y\ra x=1)$ $\Longrightarrow x=y$ (antisymmetry of $\le$). \\  
Then $X$ is $\vee$-commutative. 
\end{proposition}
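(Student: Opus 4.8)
The plan is to reduce $\vee$-commutativity to $\wedge$-commutativity and then extract the latter from the antisymmetry hypothesis $(An)$. In any involutive BE algebra we have $x\vee y=(x^*\wedge y^*)^*$ by Proposition \ref{qbe-20}$(3)$, so $x\vee y=y\vee x$ follows at once from $x^*\wedge y^*=y^*\wedge x^*$; hence it suffices to prove that $\wedge$ is commutative. Since $(An)$ is exactly antisymmetry of $\le$, for commutativity of $\wedge$ it is enough to establish the single inequality $x\wedge y\le y\wedge x$ for all $x,y$: interchanging $x$ and $y$ then gives $y\wedge x\le x\wedge y$, and antisymmetry forces $x\wedge y=y\wedge x$.

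The key step is therefore the identity $(x\wedge y)\ra(y\wedge x)=1$, i.e. $x\wedge y\le y\wedge x$, and this is precisely the computation already performed in the converse part of Theorem \ref{cqbe-120}, which uses only the hypotheses available here. Writing $(x\wedge y)^*=x^*\vee y^*=(x^*\ra y^*)\ra y^*$ and applying Lemma \ref{qbe-10}$(3)$,$(6)$ together with $(BE_4)$, one rewrites
\[
(x\wedge y)\ra(y\wedge x)=(y\ra x)\ra\bigl(y\ra(y\wedge x)\bigr),
\]
and then $(QW_1)$, in the instance $y\ra(y\wedge x)=y\ra x$, collapses the right-hand side to $(y\ra x)\ra(y\ra x)=1$. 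Chaining this with the symmetric inequality and $(An)$ yields $x\wedge y=y\wedge x$, and a final appeal to Proposition \ref{qbe-20}$(3)$ converts this into $x\vee y=y\vee x$, so $X$ is $\vee$-commutative.

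The only delicate point is ensuring that the rewriting of $(x\wedge y)\ra(y\wedge x)$ invokes nothing beyond $(QW_1)$, $(BE_4)$ and Lemma \ref{qbe-10}: in particular one must resist using the full axiom $(QW)$ or any Section~3 result (such as Proposition \ref{qbe-80}) that presupposes it, since $X$ is here assumed to satisfy only $(QW_1)$. Everything else, namely the symmetry argument and the use of antisymmetry, is routine.
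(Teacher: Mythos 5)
Your proof is correct and is essentially the paper's own argument in dual form: the paper computes $(x\vee y)\ra(y\vee x)=(y\ra x)\ra\bigl(x^*\ra(x^*\wedge y^*)\bigr)=1$ directly and concludes by symmetry and $(An)$, whereas you run the literally dual computation on $(x\wedge y)\ra(y\wedge x)$ and then pass to $\vee$ via $x\vee y=(x^*\wedge y^*)^*$. Both hinge on the same instance of $(QW_1)$ collapsing the inner term to $y\ra x$ (resp.\ $x^*\ra y^*$), and your caution that only $(QW_1)$, $(BE_4)$ and Lemma \ref{qbe-10} are invoked --- not the full axiom $(QW)$ --- is exactly the point that makes the argument legitimate under the stated hypotheses.
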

\begin{proof} 
Using $(BE_4)$, Lemma \ref{qbe-10}$(6)$ and $(QW_1)$, we get: \\
$\hspace*{1.20cm}$ $(x\vee y)\ra (y\vee x)=(x\vee y)\ra ((y\ra x)\ra x)$ \\
$\hspace*{4.00cm}$ $=(y\ra x)\ra ((x\vee y)\ra x)$ \\
$\hspace*{4.00cm}$ $=(y\ra x)\ra (x^*\ra (x\vee y)^*)$ \\
$\hspace*{4.00cm}$ $=(y\ra x)\ra (x^*\ra (x^*\wedge y^*))$ \\
$\hspace*{4.00cm}$ $=(y\ra x)\ra (x^*\ra y^*)$ \\
$\hspace*{4.00cm}$ $=(y\ra x)\ra (y\ra x)=1$. \\
Similarly $(y\vee x)\ra (x\vee y)=1$, and applying $(An)$ we get $x\vee y=y\vee x$. 
Hence $X$ satisfies condition $($$\vee$-$comm)$. 
\end{proof}

\begin{theorem} \label{qmv-40-30} In any involutive BE algebra, \\
$\hspace*{3cm}$ $(QW)$+$(An)$ $\iff$ $($$\vee$-$comm)$ \\
(i.e. a quantum-Wajsberg algebra is a Wajsberg algebra if and only if $(An)$ (antisymmetry of $\le$) holds).
\end{theorem}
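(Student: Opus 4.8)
The plan is to establish the two implications separately, using the characterization of (QW) from Theorem \ref{qbe-70-40} together with the definitional equivalences recorded in Remark \ref{qbe-130}.

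For the forward implication, I would assume that $X$ satisfies both (QW) and (An). The first step is to extract property $(QW_1)$: since $X$ is then a quantum-Wajsberg algebra, Proposition \ref{qbe-70-10} gives $x\ra (x\wedge y)=x\ra y$ for all $x,y\in X$. Now $(QW_1)$ and (An) are exactly the hypotheses of Proposition \ref{qmv-40-20}, which applies verbatim and yields that $X$ is $\vee$-commutative. Thus this direction reduces to chaining the two cited results, with no fresh computation required.

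For the converse, I would assume $X$ is $\vee$-commutative and recover both (An) and (QW). Antisymmetry is the quick part: if $x\ra y=1$ and $y\ra x=1$, then by $(BE_3)$ we have $x\vee y=(x\ra y)\ra y=1\ra y=y$ and $y\vee x=(y\ra x)\ra x=1\ra x=x$, so $\vee$-commutativity forces $y=x\vee y=y\vee x=x$, which is precisely (An). For (QW), I would invoke that an involutive BE algebra is by definition bounded, so a $\vee$-commutative involutive BE algebra is a bounded $\vee$-commutative BE algebra; by Remark \ref{qbe-130} such an algebra is (on the common signature $(\ra,{}^*,1)$) a Wajsberg algebra, and then Proposition \ref{cqbe-110} shows that it is a quantum-Wajsberg algebra, i.e. that (QW) holds.

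The one delicate point is the converse's passage from $\vee$-commutativity to (QW): it rests entirely on the chain of definitional equivalences assembled in Remark \ref{qbe-130} (bounded $\vee$-commutative BE algebras $\rightarrow$ $\vee$-commutative BCK algebras $\rightarrow$ MV algebras $\rightarrow$ Wajsberg algebras), which identifies our algebra with a Wajsberg algebra without changing $\ra$ or ${}^*$. Once that identification is granted, Proposition \ref{cqbe-110} closes the argument, and the parenthetical reading of the statement follows, since a $\vee$-commutative quantum-Wajsberg algebra is exactly a Wajsberg algebra. Everything else in the proof is immediate.
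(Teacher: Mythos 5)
Your proof is correct and follows essentially the same route as the paper: the forward direction chains Proposition \ref{qbe-70-10} with Proposition \ref{qmv-40-20}, and the converse goes through Remark \ref{qbe-130} and Proposition \ref{cqbe-110}. The only (harmless) difference is that you prove antisymmetry directly from $\vee$-commutativity via $x\vee y=(x\ra y)\ra y$, whereas the paper simply reads it off from the identification with Wajsberg algebras.
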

\begin{proof} 
Let $(X,\ra,^*,1)$ be an involutive BE algebra. 
Since $(QW)$ implies $(QW_1)$, by Proposition \ref{qmv-40-20} we have $(QW)$+$(An)$ $\Longrightarrow$ $($$\vee$-$comm)$. 
Conversely, if $X$ is $\vee$-commutative, by Remark \ref{qbe-130}, $X$ is a Wajsberg algebra. 
Hence $\le$ is antisymmetric and, according to Proposition \ref{cqbe-110}, $X$ is a quantum-Wajsberg algebra, hence it satisfies condition $(QW)$. 
\end{proof}

As we can see in the next example, there exist QMV algebras which are not MV algebras. 
 
\begin{example} \label{qmv-50}
Let $(X,\ra,^*,1)$ be the QW algebra from Example \ref{qbe-120} and let $\odot$, $\oplus$ be the operations 
derived from $\ra$ and $^*$.   
\[
\begin{array}{c|ccccccc}
\odot & 0 & a & b & c & 1 \\ \hline
0     & 0 & 0 & 0 & 0 & 0 \\ 
a     & 0 & b & 0 & 0 & a \\ 
b     & 0 & 0 & 0 & 0 & b \\ 
c     & 0 & 0 & 0 & 0 & c \\
1     & 0 & a & b & c & 1  
\end{array}
\hspace{10mm}
\begin{array}{c|ccccccc}
\oplus & 0 & a & b & c & 1 \\ \hline
0      & 0 & a & b & c & 1 \\ 
a      & a & 1 & 1 & 1 & 1 \\ 
b      & b & 1 & a & 1 & 1 \\ 
c      & c & 1 & 1 & 1 & 1 \\
1      & 0 & a & b & c & 1  
\end{array}
\]

Then $(X,\odot,^*,1)$ is a quantum-MV algebra.   
Since $(a^*\odot c)^*\odot c=c\ne a=(c^*\odot a)^*\odot a$, then $X$ is not an MV algebra. 
\end{example}

\begin{example} \label{qmv-60}
Let $(X,\ra,^*,1)$ be the QW algebra from Example \ref{cqbe-50} and let $\odot$, $\oplus$ be the operations 
derived from $\ra$ and $^*$.   
\[
\begin{array}{c|ccccccc}
\odot & 0 & a & b & c & 1 \\ \hline
0     & 0 & 0 & 0 & 0 & 0 \\ 
a     & 0 & c & 0 & b & a \\ 
b     & 0 & 0 & 0 & 0 & b \\ 
c     & 0 & b & 0 & 0 & c \\
1     & 0 & a & b & c & 1  
\end{array}
\hspace{10mm}
\begin{array}{c|ccccccc}
\oplus & 0 & a & b & c & 1 \\ \hline
0      & 0 & a & b & c & 1 \\ 
a      & a & 1 & 1 & 1 & 1 \\ 
b      & b & 1 & c & 1 & 1 \\ 
c      & c & 1 & a & 1 & 1 \\
1      & 0 & a & b & c & 1  
\end{array}
\]

Then $(X,\odot,^*,1)$ is a quantum-MV algebra which is an MV algebra. 
\end{example}

\begin{remark} \label{qmv-65}
We can find in \cite{Ior35} plenty of examples of quantum-MV algebras that are not MV algebras, from which we can obtain by the transformation $\Psi$ examples of quantum-Wajsberg algebras that are not Wajsberg algebras. 
\end{remark}

\begin{proposition} \label{qmv-70}
Let $(X,\ra,^*,1)$ be a QW algebra such that $X$ is a self distributive BE algebra. 
Then the quantum-MV algebra $(X,\odot,^*,1)$ is an MV algebra. 
\end{proposition}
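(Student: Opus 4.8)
The plan is to reduce everything to the antisymmetry criterion for quantum-MV algebras recorded in Remarks \ref{qmv-40-10}$(2)$, which asserts that a quantum-MV algebra is an MV algebra precisely when $(m$-$An)$ (antisymmetry of $\le_m$) holds. Since $(X,\odot,^*,1)$ is already known to be a quantum-MV algebra by Theorem \ref{qmv-30}, it suffices to extract $(m$-$An)$ from the self distributivity hypothesis.

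First I would exploit self distributivity to obtain commutativity of the derived operations: by Proposition \ref{qbe-30}$(4)$, a self distributive involutive BE algebra satisfies $x\wedge y=y\wedge x$ and $x\vee y=y\vee x$, so $X$ is $\vee$-commutative. Having $(\vee$-$comm)$ in hand, Theorem \ref{qmv-40-30}, which asserts that in any involutive BE algebra $(QW)$ together with $(An)$ is equivalent to $(\vee$-$comm)$, immediately yields that $X$ satisfies $(An)$, the antisymmetry of the BE-order $\le$. Equivalently, by Remark \ref{qbe-130} the $\vee$-commutative QW algebra $X$ is a Wajsberg algebra, hence a bounded $\vee$-commutative BCK algebra, in which $\le$ is a partial order.

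It then remains to transport antisymmetry of $\le$ to antisymmetry of $\le_m$. This is where the definitional equivalence between involutive BE algebras and involutive m-BE algebras enters: in the involutive setting one has $x\le y \Longleftrightarrow x\le_m y$ (recorded in the discussion following Lemma \ref{qbe-10}, via the transformations $\Phi$ and $\Psi$ of Remark \ref{qbe-05}). Consequently $(An)$ and $(m$-$An)$ express the same condition, so $(m$-$An)$ holds. Feeding this into Remarks \ref{qmv-40-10}$(2)$ concludes that $(X,\odot,^*,1)$ is an MV algebra.

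I do not expect a genuine obstacle here, since every implication needed is already established; the only point demanding care is the bookkeeping of the two order relations $\le$ and $\le_m$ under $\Phi,\Psi$, ensuring that antisymmetry is genuinely preserved by the equivalence and not merely the relations being defined. An alternative, equally short route avoids $(m$-$An)$ altogether: once $X$ is identified as a Wajsberg algebra, the definitional equivalence of Wajsberg algebras with MV algebras (Remark \ref{qbe-130}), applied through the same transformations $\Phi,\Psi$ that define $\odot$, directly exhibits $(X,\odot,^*,1)$ as the corresponding MV algebra.
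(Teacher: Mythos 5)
Your proposal is correct, but it follows a longer route than the paper's own two-line argument. Both proofs begin identically, with Proposition \ref{qbe-30}$(4)$: self distributivity forces $x\wedge y=y\wedge x$ and $x\vee y=y\vee x$. The paper then stops essentially at once, because by Remark \ref{pr-20-10} (read together with Remarks \ref{pr-30-50}$(2)$) the \L ukasiewicz axiom singling out MV algebras among quantum-MV algebras is \emph{literally} the commutativity of $\wedge_m^M$, which coincides with $\wedge$ in the involutive case; so commutativity of $\wedge$ is already the condition $(MV')$ and nothing further is required. You instead detour through the antisymmetry criterion: from $(\vee$-$comm)$ you extract $(An)$ via Theorem \ref{qmv-40-30}, transport it to $($$m$-$An)$ using the identification $\le\,\Longleftrightarrow\,\le_m$ recorded after Lemma \ref{qbe-10}, and then invoke Remarks \ref{qmv-40-10}$(2)$. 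Every step is backed by a result established earlier in the paper, so the argument is sound, and your closing alternative (identify $X$ as a Wajsberg algebra via Remark \ref{qbe-130} and pass to the definitionally equivalent MV algebra) is the same chain the paper itself uses in that remark. What the paper's version buys is economy, never leaving the level of the derived operation $\wedge$; what yours buys is a reusable template, since the antisymmetry criterion applies to any hypothesis producing a partial order, not only to self distributivity.
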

\begin{proof}
According to Proposition \ref{qbe-30}$(4)$, $x\vee y=y\vee x$, for all $x,y\in X$. 
Taking into consideration Remark \ref{pr-20-10}, it follows that $(X,\odot,^*,1)$ is an MV algebra.
\end{proof}

\begin{proposition} \label{qmv-80} 
Let $(X,\odot,^*,1)$ be a quantum-MV algebra. Then, the following hold, for all $x,y,z\in X$: \\ 
$(1)$ $x\odot (x^*\vee_m^M y)=x\odot y$ and $x\odot (y\vee_m^M x^*)=x\odot y;$ \\
$(2)$ $x\odot y\le_m^M x\le_m^M x\oplus y;$ \\
$(3)$ $x\odot y=1$ iff $x=y=1$ and $x\oplus y=0$ iff $x=y=0;$ \\
$(4)$ $x\vee_m^M (x\odot y)=x$ and $x\wedge_m^M (x\oplus y)=x;$ \\
$(5)$ $x\odot (y\vee_m^M (z\vee_m^M x^*))=(x\odot y)\vee_m^M (x\odot z);$ \\   
$(6)$ $x\oplus y\oplus (x\odot y)=x\oplus y$ and $x\odot y\odot (x\oplus y)=x\odot y;$ \\
$(7)$ $(x^*\oplus y)\vee_m^M (y^*\oplus x)=1$ and $(x\odot y^*)\wedge_m^M (y\odot x^*)=0;$ \\
$(8)$ $(z\wedge_m^M x)^*\oplus (y\wedge_m^M x)=(z\wedge_m^M x)^*\oplus y$ and 
$(z\wedge_m^M x)\odot (y\wedge_m^M x)^*=(z\wedge_m^M x)\odot y^*$.  
\end{proposition}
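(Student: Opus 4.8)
The plan is to lean entirely on the definitional equivalence of Theorem \ref{qmv-30}: the given quantum-MV algebra $(X,\odot,^*,1)$ is the $\Phi$-image of a quantum-Wajsberg algebra $(X,\ra,^*,1)$, and each of the eight items is merely the $\Psi$-translation of a property already established in Section 3. First I would fix the translation dictionary. Since $\Psi$ sets $x\ra y=(x\odot y^*)^*$ and $\Phi$ is its inverse, one has $x\odot y=(x\ra y^*)^*$, and from $x\oplus y=(x^*\odot y^*)^*$ together with $x^{**}=x$ the convenient identity $x\oplus y=x^*\ra y$. Combined with the identifications $\wedge_m^M=\wedge$, $\vee_m^M=\vee$ and $\le_m^M\Longleftrightarrow\le_Q$ recorded before Proposition \ref{qbe-20}, this rewrites every assertion about $\odot,\oplus$ in pure implication language, where the prior results apply. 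I would also note the De Morgan rules $(x\vee y)^*=x^*\wedge y^*$, $(x\wedge y)^*=x^*\vee y^*$ and $(x\odot y)^*=x^*\oplus y^*$, so that the second (dual) identity in each pair will follow from the first by starring.

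Then I would dispatch the items one by one. For $(1)$, since $(x^*\vee_m^M y)^*=x\wedge y^*$, we get $x\odot(x^*\vee_m^M y)=(x\ra(x\wedge y^*))^*=(x\ra y^*)^*=x\odot y$ by $(QW_1)$ (Proposition \ref{qbe-70-10}); the second half uses Proposition \ref{qbe-80}$(1)$ in the same way. For $(2)$, $x\odot y=(x\ra y^*)^*\le_Q x$ is Proposition \ref{qbe-80}$(4)$ and $x\le_Q x^*\ra y=x\oplus y$ is Proposition \ref{qbe-80}$(2)$. For $(3)$, $x\odot y=1$ means $x\ra y^*=0$ and $x\oplus y=0$ means $x^*\ra y=0$, so both equivalences are Proposition \ref{qbe-80}$(3)$. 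For $(4)$, Proposition \ref{qbe-80}$(4)$ gives $x\odot y\le_Q x$, whence $x=x\vee(x\odot y)$ by Proposition \ref{qbe-90}$(1)$, while $x\le_Q x\oplus y$ from $(2)$ yields $x=x\wedge(x\oplus y)$ directly from the definition of $\le_Q$. Item $(5)$ is precisely $(QW_2)$ (Corollary \ref{qbe-70-30}) after replacing $y,z$ by $y^*,z^*$ and using $a^*\vee b^*=(a\wedge b)^*$. For $(7)$, $x^*\oplus y=x\ra y$ and $y^*\oplus x=y\ra x$, so the first identity is Proposition \ref{qbe-100}$(7)$; and for $(8)$, $(z\wedge x)^*\oplus w=(z\wedge x)\ra w$, so the identity is Proposition \ref{qbe-100}$(8)$.

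The step I expect to be the main obstacle is $(6)$, since there is no single prior result in the exact shape $x\oplus y\oplus(x\odot y)=x\oplus y$. Unwinding the left side with $a\oplus b=a^*\ra b$, $x\oplus y=x^*\ra y$ and $x\odot y=(x\ra y^*)^*$, then applying Lemma \ref{qbe-10}$(7)$, Lemma \ref{qbe-10}$(3)$ and $(BE_4)$ in turn, I expect to reduce the claim to $(x\ra y^*)\ra(x^*\ra y)=x^*\ra y$. Setting $u=x$, $v=y^*$ and rewriting $x^*\ra y=v\ra u$ via Lemma \ref{qbe-10}$(6)$, this becomes $(u\ra v)\ra(v\ra u)=v\ra u$, which is exactly Proposition \ref{qbe-100}$(6)$ with $x:=v$, $y:=u$; the care needed is to keep the substitutions consistent through the chain of rewrites. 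Finally, the remaining (dual) identity in each of $(2)$–$(4)$ and $(6)$–$(8)$ follows from the one just proved by applying $^*$ and the De Morgan rules above, so I would verify those dualizations once rather than repeat the computations.
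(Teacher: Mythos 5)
Your proposal is correct and follows essentially the same route as the paper: translate each item into implication form via $x\odot y=(x\ra y^*)^*$ and $x\oplus y=x^*\ra y$, then invoke $(QW_1)$, $(QW_2)$ and Propositions \ref{qbe-80} and \ref{qbe-100}, with item $(6)$ reducing to $(y\ra x)\ra(x\ra y)=x\ra y$ exactly as in the paper's computation. The only differences are cosmetic choices of which prior lemma to cite for items $(2)$--$(4)$.
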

\begin{proof}
$(1)$: By $(QW_1)$, $x\ra (x\wedge_m^M y^*)=x\ra y^*$, so that $x\odot (x^*\vee_m^M y)=x\odot y$. 
Similarly, by Proposition \ref{qbe-80}$(1)$, $x\ra (y^*\wedge_m^M x)=x\ra y^*$, hence 
$x\odot (y\vee_m^M x^*)=x\odot y$. \\
$(2)$: It follows by Proposition \ref{qbe-80}$(2)$. \\
$(3)$: It follows by $(2)$. \\
$(4)$: We apply $(2)$ and Proposition \ref{qbe-20}$(2)$. \\ 
$(5)$: The identity $x\odot (y\vee_m^M (z\vee_m^M x^*))=(x\odot y)\vee_m^M (x\odot z)$ is equivalent to 
$x\ra (y^*\wedge_m^M (z^*\wedge_m^M x))=(x\ra y^*)\wedge_m^M (x\ra z^*)$ and we use $(QW_2)$. \\
$(6)$: We have: \\
$\hspace*{1.00cm}$ $x\oplus y=x^*\ra y=y^*\ra x=(x\ra y^*)\ra (y^*\ra x)$ (Prop. \ref{qbe-100}$(6)$) \\
$\hspace*{1.90cm}$ $=(x\ra y^*)\ra (x^*\ra y)=(x^*\ra y)^*\ra (x\ra y^*)^*$ \\
$\hspace*{1.90cm}$ $=x\oplus y\oplus (x\odot y)$. \\
The identity $x\odot y\odot (x\oplus y)=x\odot y$ is equivalent to $x\oplus y\oplus (x\odot y)=x\oplus y$. \\
$(7)$: By Proposition \ref{qbe-100}$(7)$, $(x\ra y)\vee_m^M (y\ra x)=1$, so that 
$(x^*\oplus y)\vee_m^M (y^*\oplus x)=1$. 
Moreover, $(x\odot y^*)\wedge_m^M (y\odot x^*)=(x\ra y)^*\wedge_m^M (y\ra x)^*=((x\ra y)\vee_m^M (y\ra x))^*=1^*=0$. \\
$(8)$: It follows by Proposition \ref{qbe-100}$(8)$. 
\end{proof}

\begin{proposition} \label{qmv-90} Let $X$ be a quantum-MV algebra. 
If $x,y\in X$ such that $x\le_m^M y$, then, the following hold, for any $z\in X$:\\
$(1)$ $x\oplus z\le_m^M y\oplus z;$ \\
$(2)$ $x\odot z\le_m^M y\odot z$. 
\end{proposition}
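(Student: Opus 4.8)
The plan is to transport the statement to the language of quantum-Wajsberg algebras via the definitional equivalence of Theorem \ref{qmv-30}, where both monotonicity facts reduce to results already established for the order $\le_Q$. Recall from Remark \ref{qbe-05} and the subsequent discussion that, under the mutually inverse transformations $\Phi,\Psi$, one has $x\odot z=(x\ra z^*)^*$ and $x\oplus z=x^*\ra z$, and that the two relations agree: $x\le_m^M y \Longleftrightarrow x\le_Q y$. Thus it suffices to prove that, in any quantum-Wajsberg algebra, $x\le_Q y$ implies $x^*\ra z\le_Q y^*\ra z$ (which is $(1)$) and $(x\ra z^*)^*\le_Q (y\ra z^*)^*$ (which is $(2)$).

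For $(1)$, I would first apply Proposition \ref{qbe-90}$(2)$ to pass from $x\le_Q y$ to $y^*\le_Q x^*$, and then invoke the antitonicity of $\ra$ in the first argument, namely Proposition \ref{qbe-90}$(3)$, applied to the pair $y^*\le_Q x^*$ in place of $x\le_Q y$; this yields $x^*\ra z\le_Q y^*\ra z$, that is $x\oplus z\le_Q y\oplus z$.

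For $(2)$, the order of the two ingredients is reversed. Starting from $x\le_Q y$, Proposition \ref{qbe-90}$(3)$ with $z^*$ in the right-hand slot gives $y\ra z^*\le_Q x\ra z^*$, and then Proposition \ref{qbe-90}$(2)$, applied to this inequality, takes complements and reverses the order to produce $(x\ra z^*)^*\le_Q (y\ra z^*)^*$, that is $x\odot z\le_Q y\odot z$.

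The only delicate point is the bookkeeping of order-reversals: each use of the involution $^*$ flips $\le_Q$, and one must check that the two reversals in part $(2)$ compose correctly, while in part $(1)$ the single reversal coming from $^*$ is exactly compensated by the antitone behaviour of $\ra$. Once the translations of $\oplus$ and $\odot$ are fixed, I expect no genuine obstacle: both claims are immediate consequences of the monotonicity properties of $\le_Q$ recorded in Proposition \ref{qbe-90}.
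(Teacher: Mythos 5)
Your proposal is correct and follows essentially the same route as the paper: both parts reduce, via the translation $x\oplus z=x^*\ra z$, $x\odot z=(x\ra z^*)^*$ and $\le_m^M\Longleftrightarrow\le_Q$, to the antitonicity of $^*$ and of the first argument of $\ra$ recorded in Proposition \ref{qbe-90}$(2)$,$(3)$. The only cosmetic difference is that in part $(2)$ the paper cites its own part $(1)$ applied to $y^*\le_Q x^*$ and $z^*$ before complementing, which unwinds to exactly the chain $y\ra z^*\le_Q x\ra z^*$ that you write down directly.
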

\begin{proof}
$(1)$: By Proposition \ref{qbe-90}, $x\le_m^M y$ implies $y^*\le_m^M x^*$ and $x^*\ra z\le_m^M y^*\ra z$. 
It follows that $x\oplus z\le_m^M y\oplus z$. \\ 
$(2)$: Since $y^*\le_m^M x^*$, using $(1)$ we get $y^*\oplus z^*\le_m^M x^*\oplus z^*$, so that 
$(x^*\oplus z^*)^*\le_m^M (y^*\oplus z^*)^*$. Hence $x\odot z\le_m^M y\odot z$.  
\end{proof}

\begin{theorem} \label{qmv-100} 
A quantum-MV algebra $X$ is an MV algebra if and only if, for all $x,y\in X$, $x\le_m y$ implies $x\le_m^M y$. 
\end{theorem}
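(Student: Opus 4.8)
The plan is to deduce the statement from its quantum-Wajsberg counterpart, Theorem \ref{cqbe-120}, by transporting everything through the definitional equivalence of Theorem \ref{qmv-30}. First I would fix the quantum-MV algebra $(X,\odot,^*,1)$ and pass, via the transformation $\Psi$ of Remark \ref{qbe-05}, to the associated involutive BE algebra $(X,\ra,^*,1)$; by Theorem \ref{qmv-30} this is a quantum-Wajsberg algebra, and conversely every quantum-Wajsberg algebra arises this way via $\Phi$. The key bookkeeping is that the two transformations identify the derived relations: as recorded after Lemma \ref{qbe-10}, in the involutive case one has $x\le y\iff x\le_m y$ and $x\le_Q y\iff x\le_m^M y$. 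Hence the hypothesis ``$x\le_m y$ implies $x\le_m^M y$, for all $x,y$'' is literally the condition ``$x\le y$ implies $x\le_Q y$, for all $x,y$'' read in the quantum-Wajsberg algebra $(X,\ra,^*,1)$.

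Next I would match the two notions of being ``classical''. Since Wajsberg algebras are definitionally equivalent to MV algebras (Remark \ref{qbe-130}) and quantum-Wajsberg algebras are definitionally equivalent to quantum-MV algebras (Theorem \ref{qmv-30}), and all these equivalences are realized by the same pair $\Phi,\Psi$, the quantum-MV algebra $X$ is an MV algebra if and only if the associated quantum-Wajsberg algebra $(X,\ra,^*,1)$ is a Wajsberg algebra. With these two translations in hand, the theorem becomes exactly Theorem \ref{cqbe-120}, which asserts that a quantum-Wajsberg algebra is a Wajsberg algebra iff $x\le y$ implies $x\le_Q y$ for all $x,y$. Applying $\Psi$ (and its inverse $\Phi$) to both sides yields the claim for $X$.

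As an alternative route, I could argue directly on the quantum-MV side using the antisymmetry characterization of Remarks \ref{qmv-40-10}$(2)$, namely that $X$ is an MV algebra iff $(m\text{-}An)$ holds. If $x\le_m y$ implies $x\le_m^M y$, then whenever $x\le_m y$ and $y\le_m x$ one gets $x\le_m^M y$ and $y\le_m^M x$, so $x=y$ by antisymmetry of $\le_m^M$ (which corresponds to $\le_Q$, antisymmetric by Proposition \ref{qbe-20}$(2)$); this gives $(m\text{-}An)$. Conversely, if $(m\text{-}An)$ holds then $X$ is an MV algebra, whence, via the Wajsberg picture of Corollary \ref{cqbe-30} (where $\le_Q\iff\le$), the relations $\le_m$ and $\le_m^M$ coincide and the implication is immediate.

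The main obstacle I anticipate is not any computation but the careful verification that the relation identifications $\le\leftrightarrow\le_m$ and $\le_Q\leftrightarrow\le_m^M$, together with the structural identifications MV$\leftrightarrow$Wajsberg and QMV$\leftrightarrow$QW, are all compatible with one and the same transformation pair $\Phi,\Psi$. Once this compatibility is invoked (as it is in Remark \ref{qbe-05} and the discussion following Lemma \ref{qbe-10}), the proof reduces to a direct appeal to Theorem \ref{cqbe-120}.
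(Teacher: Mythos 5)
Your main argument is exactly the paper's proof, which simply cites Theorem \ref{cqbe-120}, the definitional equivalence of Theorem \ref{qmv-30}, and Remarks \ref{qmv-40}$(1)$, relying on the identifications $\le\,\Longleftrightarrow\,\le_m$ and $\le_Q\,\Longleftrightarrow\,\le_m^M$ recorded after Lemma \ref{qbe-10}; you have merely spelled out the bookkeeping the paper leaves implicit. Your alternative route via $(m\text{-}An)$ and Remarks \ref{qmv-40-10}$(2)$ is also sound but is not needed beyond the first argument.
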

\begin{proof} 
It follows from Theorems \ref{cqbe-120}, \ref{qmv-30} and Remarks \ref{qmv-40}$(1)$. 
\end{proof}

\begin{theorem} \label{qmv-110} 
If a quantum-MV algebra $(X,\odot,^*,1)$ satisfies the following conditions for all $x,y\in X$: \\
$\hspace*{2cm}$ $x\wedge_m^M (x\vee_m^M y)=x$ and $(y^*\vee_m^M x)\odot y=x\odot y$, \\
then $X$ is an MV algebra.  
\end{theorem}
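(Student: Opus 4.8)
The plan is to transport the two hypotheses along the definitional equivalence between quantum-MV algebras and quantum-Wajsberg algebras (Theorem \ref{qmv-30}), where they will turn out to be exactly the two clauses of condition $(b)$ in Theorem \ref{cqbe-70}. Concretely, let $(X,\ra,^*,1)$ be the quantum-Wajsberg algebra attached to $(X,\odot,^*,1)$ via the mutually inverse transformations $\Phi,\Psi$ of Remark \ref{qbe-05}. Recall that in the involutive case these transformations satisfy $\wedge_m^M=\wedge$ and $\vee_m^M=\vee$, and that $x\odot y=(x\ra y^*)^*$. The whole proof then reduces to a translation followed by an appeal to results already proved.

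First I would translate the first hypothesis. Since $x\le_Q z$ means $x=x\wedge z$ by definition, the identity $x\wedge_m^M(x\vee_m^M y)=x$ becomes, in $\ra$-language, $x\wedge(x\vee y)=x$, i.e. $x\le_Q x\vee y$ for all $x,y\in X$. This is precisely the second clause of Theorem \ref{cqbe-70}$(b)$.

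Next I would translate the second hypothesis. Writing $\odot$ through $\ra$, we have $(y^*\vee_m^M x)\odot y=((y^*\vee x)\ra y^*)^*$ and $x\odot y=(x\ra y^*)^*$, so, applying the involution $^*$, the identity $(y^*\vee_m^M x)\odot y=x\odot y$ is equivalent to $(y^*\vee x)\ra y^*=x\ra y^*$. Relabelling $y^*$ as $y$, which is legitimate since $^*$ is a bijection by involutivity, yields $(y\vee x)\ra y=x\ra y$, that is, $x\ra y=(y\vee x)\ra y$ for all $x,y\in X$ --- exactly the first clause of Theorem \ref{cqbe-70}$(b)$.

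With both clauses of Theorem \ref{cqbe-70}$(b)$ in hand, that theorem gives that $X$ is $\vee$-commutative. By Remark \ref{qbe-130}, a $\vee$-commutative quantum-Wajsberg algebra is a Wajsberg algebra, and Wajsberg algebras are definitionally equivalent to MV algebras; since this equivalence is compatible with the $\Phi,\Psi$-correspondence used above (Remark \ref{qmv-40}$(1)$), the original quantum-MV algebra $(X,\odot,^*,1)$ is an MV algebra. I expect the only delicate point to be the bookkeeping in the second translation --- carrying the involution through $\odot$ and performing the substitution $y\mapsto y^*$ so that the identity lands in exactly the form required by Theorem \ref{cqbe-70}$(b)$; everything else is a direct invocation of the cited results.
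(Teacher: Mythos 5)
Your proposal is correct and follows essentially the same route as the paper: translate both hypotheses through $\Phi,\Psi$ into the two clauses of Theorem \ref{cqbe-70}$(b)$, conclude $\vee$-commutativity, and then invoke the equivalence of $\vee$-commutative quantum-Wajsberg algebras with Wajsberg/MV algebras. The translation bookkeeping (applying $^*$ to both sides of $(y^*\vee_m^M x)\odot y=x\odot y$ and then substituting $y\mapsto y^*$) matches the paper's computation exactly.
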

\begin{proof} 
From $(y^*\vee_m^M x)\odot y=x\odot y$ we get $x\ra y^*=(y^*\vee_m^M x)\ra y^*$, and replacing $y$ by $y^*$, we have  
$x\ra y=(y\vee_m^M x)\ra y$. 
Since $x\wedge_m^M (x\vee_m^M y)=x$ is equivalent to $x\le_m^M x\vee y$, applying Theorem \ref{cqbe-70}, it follows that $(X,\ra,^*,1)$ is a $\vee$-commutative quantum-Wajsberg algebra. 
Hence $X$ is a bounded $\vee$-commutative BCK algebra, so that $(X,\odot,^*,1)$ is an MV algebra.  
\end{proof}

\begin{theorem} \label{qmv-120} 
If a quantum-MV algebra $(X,\odot,^*,1)$ satisfies the following condition for all $x,y,z\in X$: \\
$\hspace*{3cm}$ $x\odot (y\odot z)=(x\odot y^*)^*\odot (x\odot z)$, \\
then $X$ is an MV algebra.  
\end{theorem}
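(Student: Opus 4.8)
The plan is to translate the hypothesis, which is stated in the $\odot$-language of quantum-MV algebras, into the $\ra$-language of quantum-Wajsberg algebras by means of the mutually inverse transformations $\Phi$ and $\Psi$ from Remark \ref{qbe-05}, and then to observe that it becomes precisely the self-distributivity of the underlying BE algebra. Once this is recognized, Proposition \ref{qmv-70} closes the argument at once, since the equivalence of the two signatures is guaranteed by Theorem \ref{qmv-30}.

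Concretely, I would rewrite each side of the identity $x\odot (y\odot z)=(x\odot y^*)^*\odot (x\odot z)$ using $\Phi$, that is $x\odot y=(x\ra y^*)^*$, together with involutivity $(DN)$. For the left-hand side one has $(y\odot z)^*=y\ra z^*$, whence $x\odot (y\odot z)=(x\ra (y\ra z^*))^*$. For the right-hand side, $(x\odot y^*)^*=x\ra y$ and $(x\odot z)^*=x\ra z^*$, so $(x\odot y^*)^*\odot (x\odot z)=((x\ra y)\ra (x\ra z^*))^*$. Applying $^*$ to both sides and invoking $(DN)$, the hypothesis is seen to be equivalent to $x\ra (y\ra z^*)=(x\ra y)\ra (x\ra z^*)$ for all $x,y,z\in X$. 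Because $z\mapsto z^*$ is a bijection of $X$ (again by involutivity), substituting $z$ for $z^*$ shows this is equivalent to $x\ra (y\ra z)=(x\ra y)\ra (x\ra z)$ for all $x,y,z$, which is exactly the self-distributivity of the BE algebra $(X,\ra,1)$.

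It then remains only to assemble the pieces: by Theorem \ref{qmv-30} the given quantum-MV algebra $(X,\odot,^*,1)$ corresponds to the quantum-Wajsberg algebra $(X,\ra,^*,1)$, which we have just shown to be self-distributive, so Proposition \ref{qmv-70} yields that $(X,\odot,^*,1)$ is an MV algebra. I do not expect a serious obstacle here; the computation is routine, and the only points demanding care are tracking the involution faithfully through the $\Phi$-translation and justifying the replacement $z\mapsto z^*$. The single real idea is the recognition that the stated condition is nothing but self-distributivity in disguise, after which the result is immediate from the already-established Proposition \ref{qmv-70}.
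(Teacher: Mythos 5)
Your proposal is correct and follows essentially the same route as the paper: both translate the hypothesis through $\Phi$ into $x\ra (y\ra z^*)=(x\ra y)\ra (x\ra z^*)$, substitute $z\mapsto z^*$ to recognize self-distributivity of the underlying BE algebra, and then conclude via $\vee$-commutativity that the algebra is MV. The only cosmetic difference is that you invoke Proposition \ref{qmv-70} to finish, while the paper re-derives that step directly from Proposition \ref{qbe-30}; your version is, if anything, slightly tidier.
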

\begin{proof} 
The identity $x\odot (y\odot z)=(x\odot y^*)^*\odot (x\odot z)$ is equivalent to 
$x\ra (y\ra x^*)=(x\ra y)\ra (x\ra z^*)$, and we get 
for all $x,y,z\in X$. 
Replacing $z$ by $z^*$ we have $x\ra (y\ra z)=(x\ra y)\ra (x\ra z)$. 
Hence $(X\ra,0,1)$ is a self distributive BE algebra, so that by Proposition \ref{qbe-30}, $X$ is 
$\vee$-commutative. Thus it is a bounded $\vee$-commutative BCK algebra, so that $(X,\odot,^*,1)$ is an MV algebra.  
\end{proof}

\begin{remark} \label{qmv-130} 
If $(X,\odot,^*,1)$ is an MV algebra, then we define the operations 
$x\wedge y=x\wedge_m^M y=y\wedge_m^M x$, i.e. 
$x\wedge y=(x^*\odot y)^*\odot y=(y^*\odot x)^*\odot x$ and $x\vee y=(x^*\wedge y^*)^*$. 
Then $(X,\wedge,\vee,0,1)$ is the lattice structure of $(X,\odot,^*,1)$. 
\end{remark}

Taking into consideration the term-equivalence between MV algebras and Wajsberg algebras (Remarks \ref{qmv-40}$(1)$), certain properties of MV algebras can be deduced from Propositions \ref{cqbe-60}, \ref{cqbe-90}, \ref{cqbe-100}.

$\vspace*{1mm}$

\section{Concluding remarks}

We defined and investigated the quantum-Wajsberg algebras and the $\vee$-commutative quantum-Wajsberg algebras. 
We proved that any Wajsberg algebra (bounded $\vee$-commutative BCK algebra) is both a quantum-Wajsberg algebra and a commutative quantum-B algebra, and gave two conditions for a quantum-Wajsberg algebra to be a Wajsberg algebra. 
The notions of ideals and filters in quantum-MV algebras have been defined and studied in 
\cite{Giunt1, Giunt7, DvPu}. 
A nonempty subset $I$ of a QMV algebra $X$ is an \emph{ideal} (or a \emph{q-ideal}) of $X$ if it satisfies the 
conditions: $(I_1)$ $x\in I$, $y\in X$ $\Rightarrow$ $x\odot y\in I$, 
$(I_2)$ $x,y\in I$ $\Rightarrow$ $x\oplus y\in I$. 
An ideal $I$ is a \emph{p-ideal} if $x\in I$, $y\in X$ $\Rightarrow$ $x\wedge y\in I$. 
A subset $F\subseteq X$ is a \emph{filter} of $X$ if it satisfies the following conditions: 
$(F_1)$ $x,y\in F$ $\Rightarrow$ $x\odot y\in F$, $(F_2)$ $x\in F$, $x\le_Q y$ $\Rightarrow$ $y\in F$. \\
The notion of a \emph{first meander} of a subset $F\subseteq X$ was defined in \cite{DvPu1} as the set: \\
$\hspace*{3cm}$ $F^1=\{x\in X\mid x\oplus y\in F \Rightarrow y\in F\}$. \\
It was proved in \cite{DvPu1} that the first meander of an ideal in $X$ is a filter, and the first
meander of a filter is an ideal. \\
As another direction of research, one can define and study the ideals and filters in QW algebras, as well as 
the first meander of a subset in QW algebras.

$\vspace*{1mm}$

\section* {\bf\leftline {Funding}}
\noindent The author declares that no funds, grants, or oher support were received during the preparation of 
this manuscript.  


\section* {\bf\leftline {Competing interests}}
\noindent The author has no relevant financial or non-financial interests to disclose.

$\vspace*{2mm}$
          
\begin{center}
\sc Acknowledgement 
\end{center}
The author is very grateful to the anonymous referees for their useful remarks and suggestions on the subject that helped improving the presentation.

$\vspace*{5mm}$


\begin{thebibliography}{99}

\bibitem{Bot1} M. Botur, I. Chajda, R. Hala\v s, J. $\rm K \ddot{u}hr$, J. Paseka, \emph{Algebraic Methods in 
Quantum Logic}, Univ. Palacky, Olomouc, 2014.

\bibitem{Chang} C.C. Chang, \emph{Algebraic analysis of many valued logics}, Trans. Amer. Math. Soc. {\bf 88}(1958), 467--490.

\bibitem{Cig1} R. Cignoli, I.M.L. D'Ottaviano, D. Mundici, \emph{Algebraic Foundations of Many-Valued Reasoning},  Kluwer Acad. Publ., Dordrecht, 2000.


\bibitem{DvPu} A. Dvure\v censkij, S. Pulmannov\'a, \emph{New trends in Quantum Structures}, Kluwer Academic Publishers, Dordrecht, Ister Science, Bratislava, 2000.

\bibitem{DvPu1} A. Dvure\v censkij, S. Pulmannov\'a, S. Salvati, \emph{Meanders in orthoposets and QMV algebras}, 
Demonstratio Mathematica {\bf 34}(2001), 1--12. 

\bibitem{DvVe1} A. Dvure\v censkij, T. Vetterlein, \emph{Pseudoeffect algebras. I. Basic properties},
Inter. J. Theor. Phys. {\bf 40}(2001), 685--701. 

\bibitem{Font1} J.M. Font, A.J. Rodriguez, A. Torrens, \emph{Wajsberg algebras}, Stochastica {\bf 8(1)}(1984), 
5--31.

\bibitem{Foulis} D.J. Foulis, M.K. Bennett, \emph{Effect algebras and unsharp quantum logics}, Found. Phys. 
{\bf 24}(1994), 1325--1346.

\bibitem{Giunt1} R. Giuntini, \emph{Quasilinear QMV algebras}, Inter. J. Theor. Phys. {\bf 34}(1995), 
1397--1407.

\bibitem{Giunt2} R. Giuntini, \emph{Quantum MV-algebras}, Studia Logica {\bf 56}(1996), 393--417. 

\bibitem{Giunt3} R. Giuntini, \emph{Axiomatizing Quantum MV-algebras}, Mathware and Soft Comput. {\bf 4}(1997), 23--39. 

\bibitem{Giunt4} R. Giuntini, \emph{Quantum MV-algebras and commutativity}, Inter. J. Theor. Phys. {\bf 37}(1998), 65--74. 

\bibitem{Giunt5} R. Giuntini, \emph{An independent axiomatization of quantum MV algebras}. In: C. Carola, 
A. Rossi (eds.), The Foundations of Quantum Mechanics, World Scientific, Singapore, 2000, pp. 233--249.

\bibitem{Giunt6} R. Giuntini, \emph{Weakly linear quantum MV-algebras}, Algebra Universalis {\bf 53}(2005), 45--72.

\bibitem{Giunt7} R. Giuntini, S. Pulmannov\'a, \emph{Ideals and congruences in effect algebras and QMV-algebras}, 
Comm. Algebra {\bf 28}(2000), 1567--1592. 

\bibitem{Grza} A. Grza\'slewicz, \emph{On some problem on BCK-algebras}, Math. Japonica {\bf 25, No. 4}(1980), 
497--500. 

\bibitem{Gudder} S. Gudder, \emph{Total extension of effect algebras}, Found. Phys. Letters {\bf 8}(1995), 243--252.

\bibitem{Han1} S.W. Han, X.T. Xu, F. Qin, \emph{The unitality of quantum B-algebras}, Int. J. Theor. Phys. 
{\bf 57}(2018), 1582--1590.

\bibitem{Han2} S. Han, R. Wang, X. Xu, \emph{The injective hull of quantum B-algebras}, Fuzzy Sets Syst. 
{\bf 369}(2019), 114--121.

\bibitem{Imai} Y. Imai, K. Is$\rm\acute{e}$ki, On axiom systems of propositional calculi. XIV. 
Proc. Japan Acad. {\bf 42}, 19--22, (1966)

\bibitem{Ior16} A. Iorgulescu, \emph{New generalizations of BCI, BCK and Hilbert algebras - Parts I, II} (Dedicated
to Drago\c s Vaida), J. Mult.-Valued Logic Soft Comput. {\bf 27(4)}(2016), 353--406 and 407--456 
(A previous version available from December 6, 2013, at {\bf http://arxiv.org/abs/1312.2494}).

\bibitem{Ior30} A. Iorgulescu, \emph{Algebras of logic vs. algebras}, In Adrian Rezus, editor, Contemporary Logic and Computing, Vol. 1 of Landscapes in Logic, pages 15--258, College Publications, 2020.

\bibitem{Ior31} A. Iorgulescu, \emph{On quantum-MV algebras - Part I: The orthomodular algebras}, Sci. Ann. Comput. Sci. {\bf 31(2)}(2021), 163--221.

\bibitem{Ior32} A. Iorgulescu, \emph{On Quantum-MV algebras - Part II: Orthomodular lattices, softlattices and widelattices}, Transactions on Fuzzy Sets and Systems {\bf 1}(2022), 1--41.

\bibitem{Ior33} A. Iorgulescu, \emph{On quantum-MV algebras - Part III: The properties (m-Pabs-i) and (WNMm)}, 
Sci. Math. Jpn. {\bf 35}(e-2022), Article 4 - Vol. 86, No. 1, 2023, 49--81.

\bibitem{Ior34} A. Iorgulescu, M. Kinyon, \emph{Putting Quantum-MV algebras on the map}, Sci. Math. Jpn. 
{\bf 34}(e-2021), Article 10 - Vol. 85, No. 2, 2022, 89--115.

\bibitem{Ior35} A. Iorgulescu, \emph{BCK algebras versus m-BCK algebras}. Foundations, Studies in Logic, Vol. 96, 2022. 

\bibitem{Iseki} K. Is\`eki, S. Tanaka, \emph {An introduction to the theory of BCK-algebras}, Math. Japonica 
{\bf 23(1)}(1978), 1--26.

\bibitem{Kalm} G. Kalmbach, \emph{Orthomodular Lattices}, London Mathematical Society Monographs, 
Academic Press, Vol. 8, New York, 1983.

\bibitem{Kim1} H.S. Kim, Y.H. Kim, \emph{On BE-algebras}, Sci. Math. Jpn. {\bf 66}(2007), 113-116.

\bibitem{Kim2} K.H. Kim, Y.H. Yon, \emph{Dual BCK-algebra and MV-algebra}, Sci. Math. Jpn. {\bf 66}(2007), 247--254.

\bibitem{Mulv1} C.J. Mulvey, \&, In: Second Topology Conference, Taormina, April 4–7, 1984, 
Rend. Circ. Mat. Palermo Suppl. {\bf 12}(1986), 99--104.

\bibitem{Mund1} D. Mundici, \emph{MV-algebras are categorically equivalent to bounded commutative BCK-algebras}, 
Math. Japonica, {\bf 6}(1986), 889--894.

\bibitem{Ptak} P. Pt\'ak, S. Pulmannov\'a, \emph{Orthomodular Structures as Quantum Logics}, 
VEDA and Kluwer Acad. Publ., Bratislava and Dordrecht, 1991. 

\bibitem{Rump1} W. Rump, \emph{Quantum B-algebras}, Cen. Eur. J. Math. {\bf 11}(2013), 1881--1899.

\bibitem{Rump2} W. Rump, Y.C. Yang, \emph{Non-commutative logic algebras and algebraic quantales}, Ann. Pure Appl. Logic {\bf 165}(2014), 759--785.

\bibitem{Rump3} W. Rump, \emph{The completion of a quantum B-algebra}, Cah. Topol. G\'eom. Diff\'er. Cat\'eg. 
{\bf 57}(2016), 203--228.

\bibitem{Rump4} W. Rump, \emph{Quantum B-algebras: their omnipresence in algebraic logic and beyond}, Soft Comput.
{\bf 21}(2017), 2521--2529.

\bibitem{Walend1} A. Walendziak, \emph{On commutative BE-algebras}, Sci. Math. Jpn. {\bf 69}(2009), 281-284.



\end{thebibliography}
\end{document}